\definecolor{darkgreen}{rgb}{0,0.45,0}
\def\@cite#1#2{[{#1\if@tempswa ,~#2\fi}]}
\DeclareMathAlphabet{\mathbf}{OT1}{cmr}{b}{n}
\def\matrixobject@{%
  \edef \next@{={\DirectionfromtheDirection@ }}%
  \expandafter \toks@ \next@ \plainxy@
  \let\xy@@ix@=\xyq@@toksix@
  \xyFN@ \OBJECT@}
\let\xy@entry@@norm=\entry@@norm
\def\entry@@norm@patched{%
  \let\object@=\matrixobject@
  \xy@entry@@norm }
\newcommand{\twocong}[2][0.5]{\ar@{}[#2] \save ?(#1)*{\cong}\restore}
\newcommand{\twoeq}[2][0.5]{\ar@{}[#2] \save ?(#1)*{=}\restore}
\newcommand{\rtwocell}[3][0.5]{\ar@{}[#2] \ar@{=>}?(#1)+/l 0.2cm/;?(#1)+/r 0.2cm/^{#3}}
\newcommand{\ltwocell}[3][0.5]{\ar@{}[#2] \ar@{=>}?(#1)+/r 0.2cm/;?(#1)+/l 0.2cm/^{#3}}
\newcommand{\ltwocello}[3][0.5]{\ar@{}[#2] \ar@{=>}?(#1)+/r 0.2cm/;?(#1)+/l 0.2cm/_{#3}}
\newcommand{\dtwocell}[3][0.5]{\ar@{}[#2] \ar@{=>}?(#1)+/u  0.2cm/;?(#1)+/d 0.2cm/^{#3}}
\newcommand{\dltwocell}[3][0.5]{\ar@{}[#2] \ar@{=>}?(#1)+/ur  0.2cm/;?(#1)+/dl 0.2cm/^{#3}}
\newcommand{\drtwocell}[3][0.5]{\ar@{}[#2] \ar@{=>}?(#1)+/ul  0.2cm/;?(#1)+/dr 0.2cm/^{#3}}
\newcommand{\dthreecell}[3][0.5]{\ar@{}[#2] \ar@3{->}?(#1)+/u  0.2cm/;?(#1)+/d 0.2cm/^{#3}}
\newcommand{\utwocell}[3][0.5]{\ar@{}[#2] \ar@{=>}?(#1)+/d 0.2cm/;?(#1)+/u 0.2cm/_{#3}}
\newcommand{\dtwocelltarg}[3][0.5]{\ar@{}#2 \ar@{=>}?(#1)+/u  0.2cm/;?(#1)+/d 0.2cm/^{#3}}
\newcommand{\utwocelltarg}[3][0.5]{\ar@{}#2 \ar@{=>}?(#1)+/d  0.2cm/;?(#1)+/u 0.2cm/_{#3}}
\DeclareMathOperator{\im}{im}
\newcommand{\cat}[1]{\mathrm{\mathcal #1}}
\newcommand{\thg}{{\mathord{\text{--}}}}
\newcommand{\quot}{\delimiter"502F30E\mathopen{}}
\newcommand{\dbr}[1]{\mathord{\left\llbracket{#1}\right\rrbracket}}
\newcommand{\res}[2]{\left.{#1}\right|_{#2}}
\newcommand{\spn}[1]{{\langle{#1}\rangle}}
\newcommand{\cd}[2][]{\vcenter{\hbox{\xymatrix#1{#2}}}}
\newcommand{\mdl}[1][A]{{\boldsymbol{#1}}}
\renewcommand{\phi}{\varphi}
\newcommand{\A}{{\mathcal A}}
\newcommand{\B}{{\mathcal B}}
\newcommand{\C}{{\mathcal C}}
\renewcommand{\H}{{\mathcal H}}
\newcommand{\J}{{\mathcal J}}
\newcommand{\K}{{\mathcal K}}
\newcommand{\M}{{\mathcal M}}
\renewcommand{\P}{{\mathcal P}}
\let\sec=\S
\renewcommand{\S}{{\mathcal S}}
\newcommand{\T}{{\mathcal T}}
\newcommand{\U}{{\mathcal U}}
\newcommand{\V}{{\mathcal V}}
\newcommand{\xtor}[1]{\cdl[@1]{{} \ar[r]|-{\object@{|}}^{#1} & {}}}
\def\hookleftarrowfill@{\arrowfill@\leftarrow\relbar{\relbar\joinrel\rhook}}
\def\twoheadleftarrowfill@{\arrowfill@\twoheadleftarrow\relbar\relbar}
\def\leftbararrowfill@{\arrowdoublefill@{\leftarrow\mkern-5mu}\relbar\mapstochar\relbar\relbar}
\def\Leftbararrowfill@{\arrowdoublefill@{\Leftarrow\mkern-2mu}\Relbar\Mapstochar\Relbar\Relbar}
\def\leftringarrowfill@{\arrowdoublefill@{\leftarrow\mkern-3mu}\relbar{\mkern-3mu\circ\mkern-2mu}\relbar\relbar}
\def\lefttriarrowfill@{\arrowfill@{\mathrel\triangleleft\mkern0.5mu\joinrel\relbar}\relbar\relbar}
\def\Lefttriarrowfill@{\arrowfill@{\mathrel\triangleleft\mkern1mu\joinrel\Relbar}\Relbar\Relbar}
\def\hookrightarrowfill@{\arrowfill@{\lhook\joinrel\relbar}\relbar\rightarrow}
\def\twoheadrightarrowfill@{\arrowfill@\relbar\relbar\twoheadrightarrow}
\def\rightbararrowfill@{\arrowdoublefill@{\relbar\mkern-0.5mu}\relbar\mapstochar\relbar\rightarrow}
\def\Rightbararrowfill@{\arrowdoublefill@{\Relbar\mkern-2mu}\Relbar\Mapstochar\Relbar\Rightarrow}
\def\rightringarrowfill@{\arrowdoublefill@\relbar\relbar{\mkern-2mu\circ\mkern-3mu}\relbar{\mkern-3mu\rightarrow}}
\def\righttriarrowfill@{\arrowfill@\relbar\relbar{\relbar\joinrel\mkern0.5mu\mathrel\triangleright}}
\def\Righttriarrowfill@{\arrowfill@\Relbar\Relbar{\Relbar\joinrel\mkern1mu\mathrel\triangleright}}
\def\leftrightarrowfill@{\arrowfill@\leftarrow\relbar\rightarrow}
\def\mapstofill@{\arrowfill@{\mapstochar\relbar}\relbar\rightarrow}
\renewcommand*\xleftarrow[2][]{\ext@arrow 20{20}0\leftarrowfill@{#1}{#2}}
\providecommand*\xLeftarrow[2][]{\ext@arrow 60{22}0{\Leftarrowfill@}{#1}{#2}}
\providecommand*\xhookleftarrow[2][]{\ext@arrow 10{20}0\hookleftarrowfill@{#1}{#2}}
\providecommand*\xtwoheadleftarrow[2][]{\ext@arrow 60{20}0\twoheadleftarrowfill@{#1}{#2}}
\providecommand*\xleftbararrow[2][]{\ext@arrow 10{22}0\leftbararrowfill@{#1}{#2}}
\providecommand*\xLeftbararrow[2][]{\ext@arrow 50{24}0\Leftbararrowfill@{#1}{#2}}
\providecommand*\xleftringarrow[2][]{\ext@arrow 10{26}0\leftringarrowfill@{#1}{#2}}
\providecommand*\xlefttriarrow[2][]{\ext@arrow 80{24}0\lefttriarrowfill@{#1}{#2}}
\providecommand*\xLefttriarrow[2][]{\ext@arrow 80{24}0\Lefttriarrowfill@{#1}{#2}}
\renewcommand*\xrightarrow[2][]{\ext@arrow 01{20}0\rightarrowfill@{#1}{#2}}
\providecommand*\xRightarrow[2][]{\ext@arrow 04{22}0{\Rightarrowfill@}{#1}{#2}}
\providecommand*\xhookrightarrow[2][]{\ext@arrow 00{20}0\hookrightarrowfill@{#1}{#2}}
\providecommand*\xtwoheadrightarrow[2][]{\ext@arrow 03{20}0\twoheadrightarrowfill@{#1}{#2}}
\providecommand*\xrightbararrow[2][]{\ext@arrow 01{22}0\rightbararrowfill@{#1}{#2}}
\providecommand*\xRightbararrow[2][]{\ext@arrow 04{24}0\Rightbararrowfill@{#1}{#2}}
\providecommand*\xrightringarrow[2][]{\ext@arrow 01{26}0\rightringarrowfill@{#1}{#2}}
\providecommand*\xrighttriarrow[2][]{\ext@arrow 07{24}0\righttriarrowfill@{#1}{#2}}
\providecommand*\xRighttriarrow[2][]{\ext@arrow 07{24}0\Righttriarrowfill@{#1}{#2}}
\providecommand*\xmapsto[2][]{\ext@arrow 01{20}0\mapstofill@{#1}{#2}}
\providecommand*\xleftrightarrow[2][]{\ext@arrow 10{22}0\leftrightarrowfill@{#1}{#2}}
\providecommand*\xLeftrightarrow[2][]{\ext@arrow 10{27}0{\Leftrightarrowfill@}{#1}{#2}}
\crefname{equation}{}{}
\crefname{Lemma}{Lemma}{Lemmas}
\crefname{Thm}{Theorem}{Theorems}
\crefname{Defn}{Definition}{Definitions}
\crefname{Not}{Notation}{Notations}
\crefname{Ex}{Example}{Examples}
\crefname{Exs}{Examples}{Examples}
\crefname{sec}{Section}{Sections}
\crefname{Prop}{Proposition}{Propositions}
\crefname{Rk}{Remark}{Remarks}
\numberwithin{equation}{section}
\theoremstyle{plain}
\newtheorem{Thm}{Theorem}[section]
\newtheorem{Prop}[Thm]{Proposition}
\newtheorem{Lemma}[Thm]{Lemma}
\newtheorem*{Thm*}{Theorem}
\theoremstyle{definition}
\newtheorem{Defn}[Thm]{Definition}
\newtheorem{Not}[Thm]{Notation}
\newtheorem{Rk}[Thm]{Remark}
\newcommand{\AT}{\cat{\T hy}}
\newcommand{\HA}{\cat{\H\A ff}}
\newcommand{\GBA}{\mathrm{gr}\cat{\B \A lg}}
\newcommand{\BA}{\cat{B \A lg}}
\newcommand{\Mon}{\cat{Mon}}
\newcommand{\Un}{\cat{\U n}}
\newcommand{\HAUn}{\cat{\H\A ff\text{--}\U n}}
\newcommand{\HAUnf}{\cat{\H\A ff\text{--}\U n^\omega}}
\newcommand{\Var}{\cat{Var}}
\newcommand{\CCVar}{\mathrm{cc}\cat{\V ar}}
\newcommand{\Top}{\J}
\newcommand{\BJ}{B_{\J}}
\newcommand{\dc}[2]{\left[\smash{{#1} \mathbin{\mid}{#2} }\right]}
\newcommand{\BM}{{\dc B M}}
\newcommand{\BJM}{{\dc {\BJ} M}}
\newcommand{\bc}{\mathbin{\bowtie}}
\newcommand{\sheq}[2]{{\dbr{{#1}\mathrel{\!\texttt{\upshape =}\!}{#2}}}}
\begin{document}
\leftmargini=2em \title[Cartesian closed varieties I]{Cartesian closed
  varieties I:\\The classification theorem} \author{Richard Garner} \address{School of
  Math.~\& Phys.~Sciences, Macquarie University, NSW 2109,
  Australia} \email{richard.garner@mq.edu.au}

\date{\today}

\thanks{The support of Australian Research Council grant DP190102432
  is gratefully acknowledged.}

\begin{abstract}
  In 1990, Johnstone gave a syntactic characterisation of the
  equational theories whose associated varieties are cartesian closed.
  Among such theories are all \emph{unary} theories---whose models are
  sets equipped with an action by a monoid $M$---and all
  \emph{hyperaffine} theories---whose models are sets with an action
  by a Boolean algebra $B$. We improve on Johnstone's result by
  showing that an equational theory is cartesian closed just when its
  operations have a unique hyperaffine--unary decomposition. It
  follows that any non-degenerate cartesian closed variety is a
  variety of sets equipped with compatible actions by a monoid $M$ and
  a Boolean algebra $B$; this is the classification theorem of the title.
\end{abstract}
\maketitle
\setcounter{tocdepth}{1}
\tableofcontents
\section{Introduction}

In~\cite{Johnstone1990Collapsed}, Johnstone considered the following
very natural question: when is a variety---by which we mean the
category of models of a single-sorted equational algebraic theory---a
cartesian closed category? This was, in fact, a follow-up to an
earlier question---``when is a variety a topos?''---asked by Johnstone
in~\cite{Johnstone1985When}, with the answers in the two cases turning
out to be surprisingly similar. The solutions Johnstone provides are
\emph{syntactic recognition theorems}, giving necessary and sufficient
conditions on the operations of an equational theory for the variety
it presents to be cartesian closed or a topos. We recall the cartesian
closed result as Theorem~\ref{thm:2} below, and the reader will
readily observe that, while a little delicate, the conditions involved
are straightforward enough to be practically useful; and indeed, a
very similar set of conditions finds computational application
in~\cite{Pattinson2015Sound}.

Be this as it may, Johnstone's conditions do little to help us
delineate the scope of the cartesian closed varieties. Much as we can
say that every Grothendieck topos is the topos of sheaves on a site,
we would like to say that every cartesian closed variety is \dots\ and
filling this gap would amount to providing a \emph{semantic
  classification theorem} for cartesian closed varieties. This is one
of the main objectives of this paper: we will show that every
cartesian closed variety is the variety of sets endowed with two
actions, one by a monoid $M$ and one by a Boolean algebra $B$, which
interact in a suitable way. Thus, our classification shows that any
cartesian closed variety is a kind of ``bicrossed product'' of the
variety of $M$-sets, which as a presheaf category, is well known to be
cartesian closed; and the variety of \emph{$B$-sets}, as introduced
in~\cite{Bergman1991Actions} and recalled in
Section~\ref{sec:b-sets-finitary}, which was shown to be cartesian
closed in~\cite[Example~8.8]{Johnstone1990Collapsed}.

Our semantic classification theorem will be obtained by way of a
\emph{syntactic classification theorem} derived from Johnstone's
recognition theorem. To motivate this result, observe first that the
cartesian closed varieties of $M$-sets are precisely those which can
be presented by \emph{unary} algebraic theories, that is, theories
whose operations and equations are all of arity $1$. On the other
hand, as shown in~\cite{Johnstone1990Collapsed} and recalled in
Section~\ref{sec:unary-algebr-theor}, the cartesian closed varieties
of $B$-sets are precisely those which are presented by
\emph{hyperaffine} algebraic theories, that is ones whose every
operation is hyperaffine (Definition~\ref{def:11}); here, for (say) a
ternary operation $f$, hyperaffineness asserts that $f(x,x,x) = x$,
i.e., $f$ is affine, and moreover that:
\begin{equation*}
  f(f(x_{11}, x_{12}, x_{13}), f(x_{21}, x_{22}, x_{23}), f(x_{31}, x_{32}, x_{33})) = f(x_{11}, x_{22}, x_{33})\rlap{ .}
\end{equation*}
Our syntactic classification theorem (Theorem~\ref{thm:3}) now states
that:
\begin{Thm*}
  An equational theory presents a cartesian closed variety if, and
  only if, every operation $f$ has a unique decomposition as a
  hyperaffine operation $h$ applied to a unary one $m$, i.e.,
  $f(x_1, x_2,\dots, x_n) = h(m(x_1), m(x_2),\dots, m(x_n))$.
\end{Thm*}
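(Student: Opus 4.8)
The plan is to derive the statement from Johnstone's recognition theorem (Theorem~\ref{thm:2}), by proving that the syntactic conditions it places on the operations of a theory are equivalent to the requirement that every operation admit a unique hyperaffine--unary decomposition. Before splitting into the two implications, I would first observe that the \emph{unary} half of such a decomposition is never in doubt: if $f(x_1,\dots,x_n) = h(m(x_1),\dots,m(x_n))$ with $h$ hyperaffine, then putting $x_1 = \dots = x_n = x$ and using affineness $h(y,\dots,y) = y$ forces $m(x) = f(x,\dots,x)$. Thus $m$ is always the diagonal of $f$, is automatically unique, and the real content of the theorem is (a) the existence of a hyperaffine $h$ satisfying $f(\vec{x}) = h(m(x_1),\dots,m(x_n))$ over this fixed $m$, and (b) the uniqueness of such an $h$.

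For the ``only if'' direction I would assume the variety cartesian closed and run Johnstone's conditions through this template. The conditions should supply, for each $n$-ary $f$, a family of derived operations that extract the ``Boolean components'' of $f$ and exhibit $f$ as a case-split of the translates $m(x_1),\dots,m(x_n)$; I would take $h$ to be the operation assembled from these components and verify the two hyperaffine identities of Definition~\ref{def:11} directly. Affineness $h(y,\dots,y)=y$ reduces to the contraction-to-the-diagonal identity, while the second, ``matrix'', identity reduces to the mutual compatibility and idempotence of the component operations, both of which should be among the laws guaranteed by cartesian closure.

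Proving \emph{uniqueness} of $h$ is where I expect the main difficulty to lie. Two hyperaffine operations $h, h'$ inducing the same $f$ agree on all tuples of the form $(m(x_1),\dots,m(x_n))$, and to conclude $h = h'$ as terms one needs these tuples to be ``generic'' enough inside the free algebra; equivalently, one needs the unary and hyperaffine directions of the theory to be suitably independent. This rigidity is not formal---it is exactly the feature that distinguishes cartesian closed theories---so I would extract it from Johnstone's conditions, presumably by showing that the component-extraction operations are themselves recoverable from $f$ alone, so that $h$ is pinned down by $f$.

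For the ``if'' direction I would assume that every operation has a unique hyperaffine--unary decomposition and recover Johnstone's conditions, then invoke Theorem~\ref{thm:2}. Here the uniqueness clause does the heavy lifting: applying it to composite operations should force the hyperaffine and unary parts to be computed coordinatewise and to commute past one another in the prescribed way, which is precisely the form of Johnstone's laws. Throughout I would keep an eye on the low-arity and degenerate cases---nullary operations and the one-point theory---where affineness is vacuous and the decomposition must be checked by hand.
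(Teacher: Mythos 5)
Your overall architecture matches the paper's: reduce to Johnstone's recognition theorem, observe that the unary part is forced to be the diagonal $m(x) = f(x,\dots,x)$, and split the work into existence of $h$, uniqueness of $h$, and the converse. That framing is correct. But as written the proposal is a plan rather than a proof: every substantive step is deferred to ``should be among the laws guaranteed by cartesian closure'' or ``presumably by showing\dots'', and the specific ideas that make those steps go through are absent.

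The most important missing ingredient is the paper's key lemma (Lemma~\ref{lem:2}): \emph{under Johnstone's condition (i), every affine operation is automatically hyperaffine}. This is what makes the existence half of the ``only if'' direction work. The candidate $h(x) = q(\lambda b.\, u_b(x_{\alpha(b)}))$ built from Johnstone's data is only directly seen to be \emph{affine} (from the left equation of~\eqref{eq:11}); there is no direct verification of the matrix identity of the kind you gesture at, and the upgrade from affine to hyperaffine requires a separate argument applying condition (i) to $p$ itself and manipulating the placed equalities $\equiv_{q,b}$. Without this lemma your existence argument does not close. Second, uniqueness of $h$ is not obtained by showing component-extraction operations are recoverable from $f$: it is a specific application of condition (ii), showing that $h(\lambda a.\, h'(\lambda a'.\, m(x_{aa'})))$ does not depend on $x_{aa'}$ for $a \neq a'$ and then using (ii) to strip off the $m$'s; your sketch does not connect uniqueness to condition (ii) at all. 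Third, in the ``if'' direction the nontrivial part is verifying Johnstone's condition (ii) (condition (i) is easy, taking $q = h$, $u_a = \mathrm{id}$, $v_a = m$, $\alpha = \mathrm{id}$); the paper does this by rewriting $p$ as $\dc k n$, comparing two auxiliary hyperaffine operations $k', k''$ in $T(A+1)$ via unicity of decompositions, and then a binary-reduct computation. None of these three arguments appears in your proposal, so the proof has genuine gaps at each of its three main junctures.
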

The proof of this result is simply Johnstone's recognition theorem
together with a little calculation, but we should note that our result
does not supplant Johnstone's, but rather complements it: for while
our condition may be simpler to state, it is harder to check if one
wants to determine if a given variety is cartesian closed.

Where our formulation comes into its own is in deriving our semantic
classification theorem. If $\mathbb{T}$ presents a cartesian closed
variety, then the syntactic classification theorem tell us that its
operations are completely determined by the monoid of unary operations
$M$ together with the subtheory $\mathbb{H} \subseteq \mathbb{T}$ of
hyperaffine operations. However, just knowing these is not enough to
recover the \emph{substitution} of our equational theory, and so we
must also record the manner in which $\mathbb{H}$ and $M$ act on each
other by substitution. This leads to what we term a \emph{matched pair
  of theories} $\dc {\mathbb{H}} M$ (Definition~\ref{def:14}), and our
second main result (Theorem~\ref{thm:4}):
\begin{Thm*}
  The category of non-degenerate cartesian closed varieties is
  equivalent to the category of non-degenerate matched pairs of
  theories.
\end{Thm*}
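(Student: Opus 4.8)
The plan is to realise the stated equivalence as an equivalence between the theories presenting cartesian closed varieties and the matched pairs presenting them, by producing an explicit pair of mutually inverse constructions and then checking that they respect morphisms. First I would pass from varieties to the finitary algebraic theories that present them, using the standard correspondence between non-degenerate varieties and (Lawvere) theories. Under this translation, Theorem~\ref{thm:3} identifies the non-degenerate cartesian closed varieties with exactly those theories $\mathbb{T}$ in which every operation $f$ admits a \emph{unique} factorisation $f(x_1,\dots,x_n)=h(m(x_1),\dots,m(x_n))$ with $h$ hyperaffine and $m$ unary; call such a $\mathbb{T}$ \emph{decomposable}. It then suffices to exhibit an equivalence between decomposable theories and non-degenerate matched pairs.

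In one direction I would send a decomposable theory $\mathbb{T}$ to the matched pair $\dc{\mathbb{H}}{M}$, where $M=\mathbb{T}(1,1)$ is the monoid of unary operations and $\mathbb{H}\subseteq\mathbb{T}$ is the subtheory of hyperaffine operations. The mutual actions constituting the matched-pair structure of Definition~\ref{def:14} are read off from substitution in $\mathbb{T}$: composing a hyperaffine $h$ with unaries in its slots, or a unary $m$ with a hyperaffine $h$, and then re-applying the unique decomposition to the result, yields the data describing how $M$ acts on $\mathbb{H}$ and how $\mathbb{H}$ acts on $M$. The verification that these satisfy the matched-pair axioms is forced by associativity of substitution in $\mathbb{T}$, together with the non-degeneracy hypothesis, which guarantees that $\mathbb{H}$ is genuinely the theory of a Boolean algebra and that the two pieces do not collapse into one another.

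In the other direction I would send a matched pair $\dc{\mathbb{H}}{M}$ to its \emph{bicrossed product} theory, whose $n$-ary operations are the pairs $(h,m)$ with $h\in\mathbb{H}$ of arity $n$ and $m\in M$, the intended reading being $h(m(x_1),\dots,m(x_n))$, and whose substitution is defined by the formulas that the matched-pair actions are designed to make coherent. The main obstacle, and the heart of the argument, is exactly this point: one must check that the axioms of Definition~\ref{def:14} are precisely what is needed for the composite of two such pairs to be again uniquely a pair, so that substitution in the bicrossed product is well defined and associative and it really is a theory, and moreover that this theory is decomposable with unary part $M$ and hyperaffine part $\mathbb{H}$. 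This is the ``little calculation'' alluded to after Theorem~\ref{thm:3}, now carried out one level up.

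Finally I would check that the two constructions are mutually inverse and functorial. Essential surjectivity and the fact that the round trip on theories recovers $\mathbb{T}$ both rest on the \emph{uniqueness} half of Theorem~\ref{thm:3}: since every operation of $\mathbb{T}$ is uniquely a pair $(h,m)$, the canonical comparison with the bicrossed product of $\dc{\mathbb{H}}{M}$ is a bijection on operations compatible with substitution, hence an isomorphism of theories; the reverse round trip recovers $\mathbb{H}$ and $M$ because the hyperaffine and unary operations of a bicrossed product are visibly $\mathbb{H}$ and $M$ with the given actions. As any morphism of theories is arity-preserving and preserves equations, a morphism of decomposable theories automatically restricts to a morphism of matched pairs, while a morphism of matched pairs extends along the decomposition to a morphism of bicrossed products by acting on each coordinate; these assignments are inverse to one another, giving fullness and faithfulness and completing the equivalence.
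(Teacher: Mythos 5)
Your proposal is correct and follows essentially the same route as the paper: extract the hyperaffine subtheory $\mathbb{H}$ and unary monoid $M$ together with their mutual substitution actions (Propositions~\ref{prop:14}--\ref{prop:16}), reconstruct a hyperaffine--unary theory as the bicrossed product $\mathbb{H} \bc M$ (Proposition~\ref{prop:17}), and use uniqueness of hyperaffine--unary decompositions to get that the two constructions are mutually inverse and that $(\thg)^\downarrow$ is fully faithful (Proposition~\ref{prop:19} and Theorem~\ref{thm:4}). The one caveat is that the theorem also covers infinitary varieties, so you should work with possibly infinitary algebraic theories (abstract clones) rather than finitary Lawvere theories; nothing else in your argument changes.
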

Applying the correspondence between hyperaffine algebraic theories
$\mathbb{H}$ and theories of $B$-sets over Boolean algebras $B$ now
transforms each matched pair of theories into what we term a
\emph{matched pair of algebras} $\BM$ (Definition~\ref{def:16}). This
involves a Boolean algebra $B$ and a monoid $M$ such that $M$ is a
$B$-set, $B$ is an $M$-set, and various further equational axioms
hold. In fact, this structure has been studied in the literature: in
the nomenclature of~\cite{Jackson2009Semigroups}, we would say that
\emph{$M$ is a $B$-monoid}. Concomitantly, we have a notion of
\emph{$\BM$-set} (Definition~\ref{def:17}), which is a set equipped
with $B$-action and $M$-action in a manner which is compatible with
the $B$-action on $M$ and the $M$-action on $B$. In terms of this, we
finally obtain our semantic classification result
(Theorem~\ref{thm:5}):
\begin{Thm*}
  The category of non-degenerate cartesian closed varieties is
  equivalent to the category of non-degenerate matched pairs of
  algebras via an equivalence which identifies the matched pair $\BM$
  with the cartesian closed variety of $\BM$-sets.
\end{Thm*}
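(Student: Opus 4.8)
The plan is to deduce Theorem~\ref{thm:5} by composing the equivalence of Theorem~\ref{thm:4} with a second equivalence that translates matched pairs of theories into matched pairs of algebras, and then to pin down the induced action on objects by computing models directly. The whole argument should be essentially formal once the translation between the hyperaffine data and the Boolean-algebraic data has been made precise.

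First I would promote the correspondence recalled above, between hyperaffine theories and Boolean algebras, to a genuine equivalence of categories. In one direction, each Boolean algebra $B$ determines the hyperaffine theory $\mathbb{H}_B$ presenting the variety of $B$-sets; in the other, every hyperaffine theory $\mathbb{H}$ arises in this way for an essentially unique $B$, recovered from the hyperaffine operations of $\mathbb{H}$ by the representability argument of Section~\ref{sec:b-sets-finitary}. Checking that these assignments are mutually inverse and functorial gives an equivalence between the category of hyperaffine theories and the category of Boolean algebras.

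Next I would lift this to the matched-pair level. A matched pair of theories $\dc{\mathbb{H}}{M}$ (Definition~\ref{def:14}) consists of a hyperaffine theory $\mathbb{H}$, a monoid $M$, and substitution data recording how $\mathbb{H}$ and $M$ act on one another; applying $\mathbb{H}\mapsto B$ turns the hyperaffine part into a Boolean algebra while leaving $M$ fixed, and carries the substitution data to a $B$-action on $M$ together with an $M$-action on $B$. The substance of this step is to verify that the compatibility axioms of a matched pair of theories correspond, clause by clause, to the axioms making $\BM$ a matched pair of algebras (Definition~\ref{def:16})---that is, to the conditions under which $M$ is a $B$-monoid. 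Since morphisms transport along the same dictionary and non-degeneracy is preserved in both directions, this produces an equivalence between the categories of non-degenerate matched pairs of theories and of non-degenerate matched pairs of algebras, and composing with Theorem~\ref{thm:4} yields the desired equivalence of categories.

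Finally I would identify the action on objects. Tracing the composite backwards, a matched pair of algebras $\BM$ corresponds to the matched pair of theories $\dc{\mathbb{H}_B}{M}$, which under Theorem~\ref{thm:4} presents a cartesian closed variety, and it remains to check that the models of this variety are exactly the $\BM$-sets of Definition~\ref{def:17}: a model inherits a $B$-set structure from $\mathbb{H}_B$ and an $M$-set structure from $M$, while the substitution relations of the matched pair impose precisely the compatibility between the two actions demanded of a $\BM$-set, and conversely every $\BM$-set assembles into such a model. The step I expect to be the main obstacle is the clause-by-clause translation of the previous paragraph: showing that the somewhat intricate substitution axioms of Definition~\ref{def:14} are equivalent, under the hyperaffine--Boolean dictionary, to the $B$-monoid axioms of Definition~\ref{def:16}, with nothing gained or lost. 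Everything else is either recalled from earlier sections or a formal composition of equivalences once this dictionary is established.
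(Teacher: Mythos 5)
Your proposal follows essentially the same route as the paper: it composes Theorem~\ref{thm:4} with an equivalence $\dc{\HA^\omega}{\Un}\simeq\dc{\BA}{\Mon}$ obtained by transporting matched-pair data across the hyperaffine-theory/Boolean-algebra equivalence of Theorem~\ref{thm:1} (this is Proposition~\ref{prop:20}), and then checks that models of $\dc{\mathbb{H}}{M}$ correspond to $\BM$-sets (Proposition~\ref{prop:21}). You have also correctly located the one genuinely delicate point, namely the clause-by-clause translation of the axioms of Definition~\ref{def:14} into those of Definition~\ref{def:16}; the paper handles this by reducing the arity-$I$ axiom (vi) to its binary instance via the fact that operations of a hyperaffine theory are determined by their binary reducts (Lemma~\ref{prop:11}(i)).
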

There is one point we should clarify about the preceding result. As
stated, it is only valid for varieties and equational theories which
are \emph{finitary}, i.e., generated by operations of finite arity.
However, in the paper proper, it will also be valid in the
\emph{infinitary} case; and the adjustments needed to account for this
are entirely confined to the Boolean algebra side of things. Indeed,
whereas \emph{finitary} hyperaffine theories correspond to Boolean
algebras $B$, arbitary hyperaffine theories correspond to
\emph{strongly zero-dimensional locales}; these are locales (=
complete Heyting algebras) in which every cover can be refined to a
partition. Two different proofs of this correspondence can be found
in~\cite[Example~8.8]{Johnstone1990Collapsed} and in~\cite[\sec
2]{Johnstone1997Cartesian}; we in fact provide a third proof
(Theorem~\ref{thm:1}), but with respect to a slightly different
presentation of strongly zero-dimensional locales, inspired
by~\cite{Name2013Boolean}: we consider Boolean algebras $B$ equipped
with a collection $\J$ of well-behaved partitions of $B$, under axioms
which make them correspond to strongly zero-dimensional topologies on
$B$. We refer to such a pair $(B, \J)$ as a \emph{Grothendieck Boolean
  algebra} $\BJ$; and we now have notions of Grothendieck matched pair
of algebras $\BJM$ and of $\BJM$-set which, when deployed in the
theorem above, make it valid for \emph{infinitary} cartesian closed
varieties.

This concludes our overview of the paper, and the reader will notice
that we give scarcely any examples. The justification for this is the
companion paper~\cite{Garner2023CartesianII}, which begins a programme
to develop the theory of $\BJM$-sets and link them to structures from
operator algebra; in particular, we will see that any matched pair
$\BM$ has an associated topological category, and that for suitable,
and natural, choices of $\BM$, we can recover the \'etale topological
groupoids which give rise to structures such as Cuntz
$C^\ast$-algebras, Leavitt path algebras, and $C^\ast$-algebras
associated to self-similar groups, and so on.

\section{Background}
\label{sec:background}

\subsection{Conventions}
\label{sec:category-theory}

Given sets $I$ and $J$ we write $J^I$ for the set of functions
from $I$ to $J$. If $u \in J^I$, we write $u_i$ for the value of the
function $u$ at $i \in I$; on the other hand, given a family of
elements $(t_i \in J : i \in I)$, we write $\lambda i.\, t_i$ for the
corresponding element of $J^I$. Given $t \in J^I$, $i \in I$ and
$j \in J$, we may write $t[j/t_i]$ for the function which agrees with
$t$ except that its value at $i$ is given by $j$. We may identify a
natural number $n$ with the set
$\{1, \dots, n\} \subseteq \mathbb{N}$.

A category $\C$ is \emph{concrete} if it comes equipped with a
faithful functor $U$ to the category of sets. This $U$ is often an
obvious ``forgetful'' functor, in which case we suppress it from our
notation. A \emph{concrete functor} $(\C, U) \rightarrow (\C', U')$ is
a functor $H \colon \C \rightarrow \C'$ with $U' H = U$. Such an $H$
associates to each $\C$-structure on a set $X$ a corresponding
$\C'$-structure, in such a way that each $\C$-homomorphism
$f \colon X \rightarrow Y$ is also a homomorphism of the associated
$\C'$-structures. A \emph{concrete isomorphism} is an invertible
concrete functor; this amounts to a bijection between $\C$-structures
and $\C'$-structures on each set $X$ for which the homomorphisms match
up.

If $\C$ is a concrete category and $X$ a set, then a \emph{free
  $\C$-object on $X$} is a $\C$-object $\mdl[F](X)$ endowed with a
function $\eta_X \colon X \rightarrow F(X)$, the \emph{unit}, such
that, for any $\C$-object $\mdl[Y]$, each function
$f \colon X \rightarrow Y$ has a unique factorisation through $\eta_X$
via a $\C$-homomorphism
$f^\dagger \colon \mdl[F](X) \rightarrow \mdl[Y]$. We say that
\emph{free $\C$-structures exist} if they exist for every set $X$;
this is equivalent to the faithful functor $U$ having a left adjoint.

\subsection{Varieties and algebraic theories}
\label{sec:algebraic-theories}

By a \emph{variety} $\V$, we mean the concrete category of (possibly
empty) models of a (possibly infinitary) single-sorted equational
theory. For theories with a mere \emph{set} of function symbols, free
$\V$-structures always exist; we will relax this by allowing a proper
\emph{class} of function symbols, but still assuming that free
$\V$-structures exist. So the category of complete join-lattices is a
variety in our sense, but not the category of complete Boolean
algebras. We write $\Var$ for the category of varieties and concrete
functors between them.

A variety is \emph{non-degenerate} if it contains a structure with at
least two elements. To within concrete isomorphism, there are two
degenerate varieties: $\V_\mathbf{1}$ is the full subcategory of
$\cat{Set}$ on the one-element sets, while $\V_\mathbf{2}$ is the full
subcategory on the zero- and one-element sets. The former is the
category of models of the equational theory with no operation symbols
and the axiom $x = y$; while the latter is the category of models of
the theory with a single constant $c$ and the axiom $c = x$.

A given variety may be axiomatised by operations and equations in many
ways; however, there is always a maximal choice, which is captured by
the following notion of \emph{algebraic theory}. This is what a
universal algebraist would call an (infinitary) abstract clone, and
what a category theorist would call a monad relative to the identity
functor $\cat{Set} \rightarrow \cat{Set}$.

\begin{Defn}[Algebraic theories]
  \label{def:1}
  An \emph{algebraic theory} $\mathbb{T}$ comprises:
  \begin{itemize}
  \item For each set $I$, a set $T(I)$ of
    \emph{$\mathbb{T}$-operations of arity $I$};
  \item For each set $I$ and $i \in I$, an element $\pi_i \in
    T(I)$ (the \emph{$i$th projection});
  \item For all sets $I,J$ a \emph{substitution} function
    $T(I) \times T(J)^I \mapsto T(J)$, written as  $(t, u) \mapsto t(u)$, or when
    $I = n$ as $(t,u) \mapsto t(u_1, \dots, u_n)$;
  \end{itemize}
  all subject to the axioms:
  \begin{itemize}
  \item $t(\lambda i.\, \pi_i) = t$ for all $t \in T(I)$;
  \item $\pi_i(u) = u_i$ for all $u \in T(J)^I$ and $i \in I$;
  \item $(t(u))(v) = t(\lambda i.\, u_i(v))$ for all $t \in T(I)$, $u \in T(J)^I$ and $v \in T(K)^J$.
  \end{itemize}

  \noindent If $\mathbb{S}$ and $\mathbb{T}$ are algebraic theories,
  then a \emph{homomorphism of algebraic theories}
  $\varphi \colon \mathbb{S} \rightarrow \mathbb{T}$ comprises
  functions $\varphi_I \colon S(I) \rightarrow T(I)$ for each set $I$,
  such that:
  \begin{itemize}
  \item $\varphi_I(\pi_i) = \pi_i$ for all $i \in I$;
  \item $\varphi_J(t(u)) = \varphi_I(t)(\lambda i.\, \varphi_{J}(u_i))$
    for all $t \in T(I)$ and $u \in T(J)^I$.
  \end{itemize}
  We write $\AT$ for the category of algebraic theories and 
  homomomorphisms.
\end{Defn}

An algebraic theory is said to be \emph{non-degenerate} if
$\pi_1 \neq \pi_2 \in T(2)$, or equivalently, if $i \neq j \in I$
implies $\pi_i \neq \pi_j \in T(I)$. To within isomorphism, there are
exactly two degenerate algebraic theories: $\mathbb{T}_\mathbf{1}$,
in which $T_\mathbf{1}(I) = 1$ for all $I$; and
$\mathbb{T}_{\mathbf{2}}$, in which $T_\mathbf{2}(0) = 0$ and
$T_\mathbf{2}(I) = 1$ otherwise. 

When working with an algebraic theory $\mathbb{T}$ we will deploy
\emph{variable notation}. For example, in the algebraic theory of
semigroups, the defining axiom is expressed by the equality left below
in $T(3)$; however, we would prefer to write it as to the right.
\begin{equation*}
  m(m(\pi_1, \pi_2), \pi_3) = m(\pi_1, m(\pi_2, \pi_3)) \qquad m(m(x,y),z) = m(x, m(y,z))\rlap{ .}
\end{equation*}
We may do so if we view this right-hand equality as universally
quantified over all sets $I$ and all elements $x,y,z \in T(I)$. It
then implies the left-hand equality on taking
$(x,y,z) = (\pi_1, \pi_2, \pi_3)$, and conversely, is implied by the
left equality via substitution. Our convention throughout will be that
any $x$-, $y$- or $z$-symbol (possibly subscripted) appearing in an
equality is to be interpreted in this way.

\subsection{Semantics and realisation}
\label{sec:structure-semantics}

We now draw the link between algebraic theories and varieties via the
\emph{semantics} of an algebraic theory.

\begin{Defn}[Category of models of a theory]
  \label{def:2}
  A \emph{model} $\mdl[X]$ for an algebraic theory $\mathbb{T}$ is a
  set $X$ together with for each set $I$ an \emph{interpretation
    function} $T(I) \times X^{I} \rightarrow M$, written as
  $(t, a) \mapsto \dbr{t}(a)$ satisfying the following axioms:
  \begin{itemize}
  \item $\dbr{\pi_i}(x) = x_i$ for all $x \in X^I$ and $i \in I$;
  \item $\dbr{t(u)}(x) = \dbr{t}(\lambda i.\, \dbr{u_i}(x))$ for all
    $t \in T(I)$, $u \in T(J)^I$ and $x \in A^J$.
  \end{itemize}
  A \emph{$\mathbb{T}$-model homomorphism}
  $\mdl[X] \rightarrow \mdl[Y]$ is a function
  $f \colon X \rightarrow Y$ with
  $f(\dbr{t}_{\mdl[X]}(x)) = \dbr{t}_{\mdl[Y]}(f(x))$ for all
  $t \in T(I)$ and $a \in X^I$. We write $\mathbb{T}\text-\cat{Mod}$
  for the concrete category of $\mathbb{T}$-models and homomorphisms.
\end{Defn}

The category $\mathbb{T}\text-\cat{Mod}$ can be presented as the
models of an equational first-order theory, whose proper class of
function-symbols is given by the disjoint union of the $T(I)$'s.
Moreover, free $\mathbb{T}$-models exist; for indeed, given a set $X$,
the set $T(X)$ becomes a $\mathbb{T}$-model $\mdl[T](X)$ on defining
$\dbr{t}_{\mdl[T](X)}(u) = t(u)$, and now the map
$\eta_X \colon X \rightarrow T(X)$ sending $x$ to $\pi_x$ exhibits
$\mdl[T](X)$ as free on $X$. Thus, the concrete category
$\mathbb{T}\text-\cat{Mod}$ is a variety for any theory $\mathbb{T}$.
In fact, this process is functorial:

\begin{Defn}[Semantics of algebraic theories]
  \label{def:3}
  For any homomorphism
  $\varphi \colon \mathbb{S} \rightarrow \mathbb{T}$ of algebraic
  theories, we write
  $\varphi^\ast \colon \mathbb{T}\text-\cat{Mod} \rightarrow
  \mathbb{S}\text-\cat{Mod}$ for the concrete functor which to each
  $\mathbb{T}$-model $\mdl[X]$ associates the $\mathbb{S}$-model
  structure $\varphi^\ast \mdl[X]$ on $X$ with
  $\dbr{t}_{\varphi^\ast \mdl[X]}(x) = \dbr{\varphi(t)}_{\mdl[X]}(x)$.
  We write
  $(\thg)\text-\cat{Mod} \colon \AT^\mathrm{op} \rightarrow \Var$ for
  the functor sending each algebraic theory to its concrete category
  of models, and each homomorphism $\varphi$ to $\varphi^\ast$.
\end{Defn}

A basic result in the functorial semantics of algebraic theories is
that $(\thg)\text-\cat{Mod}$ is an equivalence of categories. In
particular, it is essentially surjective, which is to say that every
variety is realised by some algebraic theory; here, we say that an
algebraic theory $\mathbb{T}$ \emph{realises} a variety $\V$ if
$\mathbb{T}\text-\cat{Mod}$ and $\V$ are concretely isomorphic. For
example, the degenerate varieties $\V_\mathbf{1}$, $\V_\mathbf{2}$ are
realised by the degenerate algebraic theories $\mathbb{T}_\mathbf{1}$,
$\mathbb{T}_{\mathbf{2}}$. In general, we can find a $\mathbb{T}$
which realises a variety $\V$ using free objects in $\V$. Writing
$\mdl[T](I)$ for the free $\V$-object on $X$, with unit
$\eta_I \colon I \rightarrow T(I)$, the desired theory $\mathbb{T}$
has sets of operations $T(I)$; projection elements
$\pi_i = \eta_X(i) \in T(I)$; and substitution given by
$t(u) = u^\dagger(t)$.

\subsection{Cartesian closed varieties}
\label{sec:cart-clos-vari}
Any variety $\V$ has finite products, with the product of
$\mdl[X],\mdl[Y] \in \V$ being $X \times Y$ with the componentwise
$\V$-structure. We say $\V$ is \emph{cartesian closed} if for every
$\mdl[Y] \in \V$, the functor
$(\thg) \times \mdl[Y] \colon \V \rightarrow \V$ has a right adjoint.
More elementarily, this means that for every $\mdl[Y], \mdl[Z]$ in
$\V$, there is a ``function-space'' $\mdl[Z]^{\mdl[Y]} \in \V$ and a
homomorphism
$\mathrm{ev} \colon \mdl[Z]^{\mdl[Y]} \times \mdl[Y] \rightarrow
\mdl[Z]$ (``evaluation''), such that for all
$f \colon \mdl[X] \times \mdl[Y] \rightarrow \mdl[Z]$, there is a
unique $\bar f \colon \mdl[X] \rightarrow \mdl[Z]^{\mdl[Y]}$ with
$\mathrm{ev} \circ (\bar f \times 1) = f$. Note that, in particular,
the degenerate varieties $\V_\mathbf{1}$ and $\V_\mathbf{2}$ are
cartesian closed, since they are equivalent to the one- and
two-element Heyting algebras respectively.

The simplest possible class of non-degenerate cartesian closed
varieties are the varieties of \emph{$M$-sets} for a monoid $M$: sets
equipped with an associative, unital left $M$-action. It is well known
that the variety of $M$-sets, being a presheaf category, is cartesian
closed; we record the structure here for future reference.
\begin{Prop}
  \label{prop:1}
  The variety of $M$-sets is cartesian closed.
\end{Prop}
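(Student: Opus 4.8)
The plan is to exhibit the cartesian closed structure explicitly, rather than merely invoking the fact that the variety of $M$-sets is the presheaf category $\Set^{M}$ (with $M$ regarded as a one-object category) and that every presheaf category is cartesian closed; for we shall want the concrete formulae in later sections. Throughout, recall that the product of two $M$-sets carries the diagonal action $m \cdot (x,y) = (m\cdot x, m \cdot y)$, and that the free $M$-set on one generator is $M$ acting on itself by left multiplication.

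First I would pin down the function-space. Since $M$ is free on one generator, the underlying set of $\mdl[Z]^{\mdl[Y]}$ is forced to be the set of $M$-set homomorphisms $\phi \colon M \times Y \to Z$; explicitly, such a $\phi$ is a function satisfying the equivariance condition $\phi(km, k\cdot y) = k \cdot \phi(m,y)$ for all $k, m \in M$ and $y \in Y$. I would then equip this set with the left $M$-action given by right translation in the first argument, $(n \cdot \phi)(m, y) \defeq \phi(mn, y)$. A short check confirms that $n \cdot \phi$ is again equivariant and that the two action axioms hold, the key point being that right multiplication in the first coordinate commutes with the left action used to define equivariance.

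Next I would write down the evaluation homomorphism $\mathrm{ev} \colon \mdl[Z]^{\mdl[Y]} \times \mdl[Y] \to \mdl[Z]$ by $\mathrm{ev}(\phi, y) = \phi(1, y)$, and verify it is a homomorphism by applying the equivariance of $\phi$ in the case $m = 1$. For the universal property, given a homomorphism $f \colon \mdl[X] \times \mdl[Y] \to \mdl[Z]$, I would define its transpose $\bar f \colon \mdl[X] \to \mdl[Z]^{\mdl[Y]}$ by $\bar f(x)(m, y) = f(m \cdot x, y)$; one then checks in turn that $\bar f(x)$ is equivariant, that $\bar f$ is an $M$-set homomorphism, and that $\mathrm{ev} \circ (\bar f \times 1) = f$, each being a one-line computation.

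Finally, for uniqueness I would suppose $g \colon \mdl[X] \to \mdl[Z]^{\mdl[Y]}$ satisfies $\mathrm{ev} \circ (g \times 1) = f$ and recover its values directly: writing $\psi_x = g(x)$, the homomorphism property gives $\psi_{mx} = m \cdot \psi_x$, whence $\psi_x(m, y) = (m \cdot \psi_x)(1,y) = \psi_{mx}(1, y) = \mathrm{ev}(g(mx), y) = f(m\cdot x, y) = \bar f(x)(m,y)$, forcing $g = \bar f$. None of these steps presents a genuine obstacle; the only point requiring care is the choice of action on the function-space—right translation in the hidden $M$-coordinate—which is precisely what is needed to make every map in sight equivariant.
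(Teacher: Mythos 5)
Your proposal is correct and is essentially the paper's own proof: the same function-space $Z^Y$ of $M$-set maps $M\times Y\to Z$ with the same right-translation action $(n\cdot\phi)(m,y)=\phi(mn,y)$, the same evaluation $\phi\mapsto\phi(1,y)$, and the same transpose $\bar f(x)(m,y)=f(m\cdot x,y)$. You merely spell out the routine equivariance and uniqueness checks that the paper leaves implicit.
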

\begin{proof}
  For $M$-sets $Y$ and $Z$, the function-space $Z^Y$ is the set of
  $M$-set maps $\varphi \colon M \times Y \rightarrow Z$
  (where $M$ acts on itself by multiplication) under the action
  \begin{equation}
    \label{eq:1}
    m, f \qquad \mapsto \qquad m^\ast f = (\lambda n, y.\, f(nm, y))\rlap{ .}
  \end{equation}
  Evaluation
  $\mathrm{ev} \colon Z^Y \times Y \rightarrow Z$ is given by
  $\mathrm{ev}(f, y) = f(1, y)$; and given a homomorphism
  $f \colon X \times Y \rightarrow Z$, its transpose
  $\bar f \colon X \rightarrow Z^Y$ is given by
  $\bar f(x)(m, y) = f(mx, y)$.
\end{proof}

\section{Boolean algebras and $B$-sets}
\label{sec:b-sets-finitary}

\subsection{Varieties of $B$-sets}
\label{sec:b-sets}

In this section, we discuss another important class of non-degenerate
cartesian closed varieties, namely the varieties of \emph{$B$-sets}
for a Boolean algebra $B$, as introduced by Bergman
in~\cite{Bergman1991Actions}. In what follows, we write
$(\vee, \wedge, 0, 1)$ for the distributive lattice structure of a
Boolean algebra $B$, and $(\thg)'$ for its negation; we say that $B$
is \emph{non-degenerate} if $0 \neq 1$.

\begin{Defn}[Variety of $B$-sets]
  \label{def:5}
  Let $B$ be a non-degenerate Boolean algebra. A \emph{$B$-set} is a
  set $X$ endowed with an action $B \times X \times X \rightarrow X$,
  written $(b,x,y) \mapsto b(x,y)$, satisfying the axioms
  \begin{equation}
    \label{eq:2}
    \begin{aligned}
      b(x,x) &= x \qquad b(b(x,y),z) = b(x,z) \qquad b(x,b(y,z)) = b(x,z)\\
      1(x,y) &= x \qquad b'(x,y) = b(y,x) \qquad (b \wedge c)(x,y) = b(c(x,y),y)\rlap{ .}
    \end{aligned}
  \end{equation}
  We write $B\text-\cat{Set}$ for the variety of $B$-sets.
\end{Defn}

As explained in~\cite{Bergman1991Actions}, the first three axioms make
each $b(\thg, \thg) \colon X \times X \rightarrow X$ into a
\emph{decomposition
  operation}~\cite[Definition~4.32]{Mckenzie1987Algebras}, meaning
that it induces a direct product decomposition
$X \cong X_1 \times X_2$ where $X_1$ and $X_2$ are quotients of $X$ by
suitable equivalence relations. The first equivalence relation
$\equiv_b$ is defined by
\begin{equation}
  \label{eq:3}
  x \equiv_b y \iff b(x,y) = y\rlap{ ;}
\end{equation}
the second dually relates $x$ and $y$ just when
$b(x,y) = x$ but, in light of the fifth $B$-set axiom, can
equally be described as $\equiv_{b'}$. In fact, as 
in~\cite[Theorem~4.33]{Mckenzie1987Algebras}, we can recover
$b(\thg, \thg)$ from $\equiv_b$ and $\equiv_{b'}$, since $b(x,y)$ is
the unique element of $X$ with
\begin{equation}
  \label{eq:4}
  b(x,y) \equiv_b x \qquad \text{and} \qquad b(x,y) \equiv_{b'} y\rlap{ .}
\end{equation}
Thus, we can recast the notion of $B$-set in terms of a set equipped
with a suitable family of equivalence relations:

\begin{Prop}
  \label{prop:2}
  Let $B$ be a non-degenerate Boolean algebra. Each $B$-set structure
  on a set $X$ induces equivalence relations
  $(\mathord{\equiv_b} : b \in B)$ as in~\eqref{eq:3} which satisfy:
  \begin{enumerate}[(i)]
  \item If $x \equiv_b y$ and $c \leqslant b$ then $x \equiv_c y$;
  \item $x \equiv_1 y$ if and only if $x = y$, and $x \equiv_0 y$ always;
  \item If $x \equiv_b y$ and $x \equiv_c y$ then $x \equiv_{b \vee c}
    y$;
  \item For any $x,y \in X$ and $b \in B$, there is $z
    \in X$ such that $z \equiv_b x$ and $z \equiv_{b'} y$.
  \end{enumerate}
  Any family of equivalence relations $(\mathord{\equiv_b} : b \in B)$
  satisfying (i)--(iv) arises in this way from a unique $B$-set
  structure on $X$ whose operations are characterised
  by~\eqref{eq:4}. Furthermore, under this correspondence, a function
  $X \rightarrow Y$ between $B$-sets is a homomorphism if and only it
  preserves each equivalence relation $\equiv_b$.
\end{Prop}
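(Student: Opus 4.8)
The plan is to establish the stated correspondence in both directions and then treat the homomorphism clause, throughout using that $b \vee b' = 1$, $b \wedge b' = 0$, and the distributive law.

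\emph{Forward direction.} First I would check that each $\equiv_b$ defined by~\eqref{eq:3} is genuinely an equivalence relation: reflexivity is $b(x,x) = x$; symmetry follows since $b(x,y) = y$ gives $b(y,x) = b(b(x,y),x) = b(x,x) = x$; and transitivity follows similarly from the second axiom of~\eqref{eq:2}. Properties (i)--(iii) are then short computations: (i) follows from $c(x,y) = (c \wedge b)(x,y) = c(b(x,y),y) = c(y,y) = y$ using the sixth axiom and $b(x,y) = y$; (ii) is immediate from $1(x,y) = x$ and $0(x,y) = 1'(x,y) = 1(y,x)$; and (iii) expands $(b \vee c)(x,y)$ through $b \vee c = (b' \wedge c')'$ and the fifth and sixth axioms. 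Finally (iv) requires no work beyond taking $z = b(x,y)$, since this is exactly~\eqref{eq:4}.

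\emph{Converse direction.} Given a family $(\equiv_b)$ satisfying (i)--(iv), I would define $b(x,y)$ to be the unique $z$ with $z \equiv_b x$ and $z \equiv_{b'} y$: existence is (iv), and uniqueness holds because two such elements $z, z'$ satisfy $z \equiv_b z'$ and $z \equiv_{b'} z'$ by transitivity, whence $z \equiv_1 z'$ by (iii) and $z = z'$ by (ii). It then remains to verify the six axioms of~\eqref{eq:2}. The equations $b(x,x) = x$ and $1(x,y) = x$ hold because $z = x$ meets the defining conditions; $b'(x,y) = b(y,x)$ holds because both sides are characterised by the same pair of relations; and the two ``absorption'' axioms $b(b(x,y),z) = b(x,z)$ and $b(x,b(y,z)) = b(x,z)$ follow by transitivity, using that $b(x,y) \equiv_b x$ and $b(y,z) \equiv_{b'} z$.

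The genuine obstacle is the sixth axiom $(b \wedge c)(x,y) = b(c(x,y),y)$. Writing $w = c(x,y)$ and $v = b(w,y)$, I must show that $v$ satisfies the two relations defining $(b \wedge c)(x,y)$, namely $v \equiv_{b \wedge c} x$ and $v \equiv_{b' \vee c'} y$ (recall $(b \wedge c)' = b' \vee c'$). The first is easy: $v \equiv_b w$ and $w \equiv_c x$ descend via (i) to $v \equiv_{b \wedge c} w \equiv_{b \wedge c} x$. The second is where the work lies: I have $v \equiv_{b'} y$ and must promote it to $v \equiv_{b' \vee c'} y$, which by (iii) reduces to also proving $v \equiv_{c'} y$. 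This I would extract from the Boolean identity $(b \wedge c') \vee (b' \wedge c') = c'$: combining $v \equiv_b w \equiv_{c'} y$ (yielding $v \equiv_{b \wedge c'} y$ by (i) and transitivity) with $v \equiv_{b'} y$ (yielding $v \equiv_{b' \wedge c'} y$ by (i)), property (iii) gives $v \equiv_{c'} y$. This distributive-law manoeuvre is the crux of the argument.

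\emph{Mutual inverseness and homomorphisms.} That the two constructions are inverse is then quick. Passing from a $B$-set to its relations and back reproduces the operation by~\eqref{eq:4}; passing from a family $(\equiv_b)$ to its operation and back recovers $\equiv_b$, since $b(x,y) = y$ holds exactly when $y \equiv_b x$, the condition $y \equiv_{b'} y$ being automatic. Uniqueness of the $B$-set structure is forced because~\eqref{eq:4} determines the operations from the relations. Finally, for the homomorphism clause: if $f$ is a $B$-set map then applying $f$ to $b(x,y) = y$ shows it preserves each $\equiv_b$; conversely, if $f$ preserves every $\equiv_b$, then $f(b(x,y))$ satisfies the two relations characterising $b(f(x),f(y))$, so the two agree by the uniqueness established above, making $f$ a homomorphism.
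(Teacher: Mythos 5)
Your proposal is correct and follows essentially the same route as the paper's proof: verify (i)--(iv) by direct computation with the $B$-set axioms, construct the inverse operation as the unique element supplied by (iv) with uniqueness from (ii)--(iii), check the six axioms (with the $\wedge$-axiom handled by the same distributive-law combination of $\equiv_{b\wedge c'}$ and $\equiv_{b'\wedge c'}$ via (iii)), and read off the homomorphism clause from the characterisation~\eqref{eq:4}. The only cosmetic differences are that you prove the equivalence-relation property directly where the paper cites \cite{Mckenzie1987Algebras}, and you route the forward verification of (iii) through De Morgan where the paper uses the derived identity $(b\vee c)(x,y)=b(x,c(x,y))$.
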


\begin{proof}
  Given $B$-set structure on $X$, each $\equiv_b$ as in~\eqref{eq:3}
  is an equivalence relation
  by~\cite[Lemma~on~p.162]{Mckenzie1987Algebras}. To verify (i), if
  $x \equiv_b y$ and $c \leqslant b$, then
  $c(x,y) = (c \wedge b)(x,y) = c(b(x,y), y) = c(y,y) = y$, so
  $x \equiv_c y$. Next, (ii) follows immediately from $1(x,y) = x$ and
  $0(x,y) = y$. For (iii), if $b(x,y) = y$ and $c(x,y) = y$, then
  $(b \vee c)(x,y) = b(x, c(x,y)) = b(x,y) = y$. Finally, for (iv), we
  take $z = b(x,y)$; then $b(z,x) = b(b(x,y),x) = b(x,x) = x$ and
  $b'(z,y) = b(y,z) = b(y, b(x,y)) = b(y,y) = y$ as desired. We argued
  above that we can reconstruct the $B$-set operations from the
  $\equiv_b$'s, so this gives an injective map from $B$-set structures
  on $X$ to families of equivalence relations satifying (i)--(iv).

  To show surjectivity, consider a family
  $(\mathord{\equiv_b} : b \in B)$ satisfying (i)--(iv). For any
  $x,y \in X$ and $b \in B$, the element whose existence is asserted
  by (iv) is, by (ii) and (iii), \emph{unique}. If we write it as
  $b(x,y)$ as in~\eqref{eq:4}, then we claim this assignment endows
  $X$ with $B$-set structure. Indeed:
  \begin{itemize}
  \item Since $x \equiv_b x$ and $x \equiv_{b'} x$, we have
    $b(x,x) = x$;
  \item Since $b(b(x,y),z) \equiv_b b(x,y) \equiv_b x$ and
    $b(b(x,y),z) \equiv_{b'} z$, we have $b(b(x,y),z) = b(x,z)$, and
    likewise we have $b(x, b(y,z)) = b(x,z)$;
  \item Since $x$ is the \emph{only} $z$ with $x \equiv_1 z$, we have
    $1(x,y) = x$;
  \item Since $b(x,y) \equiv_{b'} y$ and $b(x,y) \equiv_{b} y$ we
    have $b'(y,x) = b(x,y)$;
  \item By (i) we have
    $b(c(x,y),y) \equiv_{b \wedge c} c(x,y) \equiv_{b \wedge c} x$.
    Similarly
    $b(c(x,y),y) \equiv_{b \wedge c'} c(x,y) \equiv_{b \wedge c'} y$,
    and also $b(c(x,y),y) \equiv_{b'} y$, whence by (iii),
    $b(c(x,y),y) \equiv_{(b \wedge c)'} y$. Thus
    $b(c(x,y),y) = (b \wedge c)(x,y)$.
  \end{itemize}
  Moreover, this $B$-set structure induces the given equivalence
  relations $\equiv_b$; indeed, since $b(x,y) \equiv_{b'} y$ and
  $b(x,y) \equiv_b x$ we have by (i)--(iii) that $b(x,y) = y$ if and
  only if $x \equiv_b y$. Finally, any $B$-set homomorphism
  $f \colon X \rightarrow Y$ clearly preserves each $\equiv_b$;
  conversely, if $f$ preserves each $\equiv_b$, then from~\eqref{eq:4}
  in $X$ we have $f(b(x,y)) \equiv_b f(x)$ and
  $f(b(x,y)) \equiv_{b'} f(y)$, and so $f(b(x,y)) = b(f(x), f(y))$
  by~\eqref{eq:4} in $Y$.
\end{proof}

\begin{Rk}
  \label{rk:5}
  Conditions (i)--(iii) above say that, for any elements $x,y$ of a
  $B$-set $X$, the set $\sheq{x}{y} = \{b \in B : x \equiv_b y\}$ is
  an \emph{ideal} of the Boolean algebra $B$; and since each
  $\equiv_b$ is an equivalence relation, the function
  $\sheq{\,\ }{\,\ } \colon X \times X \rightarrow \mathrm{Idl}(B)$ is
  an \emph{$\mathrm{Idl}(B)$-valued equivalence relation}, in the
  sense that $\sheq x x = 1$, $\sheq x y = \sheq y x$ and
  $\sheq x y \wedge \sheq y z \leqslant \sheq x z$. So $X$ becomes an
  \emph{$\mathrm{Idl}(B)$-valued set} in the sense
  of~\cite{Fourman1979Sheaves}---but one of a rather special kind,
  since in a general $\mathrm{Idl}(B)$-valued set the equality
  $\sheq{\,\ }{\,\ }$ need only be a \emph{partial} equivalence
  relation.
  As explained in~\cite{Fourman1979Sheaves}, $\mathrm{Idl}(B)$-valued
  sets are a way of presenting sheaves on $B$, and so the preceding
  observations draw the link between $B$-sets and sheaves that was
  central to~\cite{Bergman1991Actions}. In this context, the totality
  of our $\sheq{\,\ }{\,\ }$ reflects the fact that the elements of a
  $B$-set $X$ correspond to \emph{total} elements of the corresponding
  sheaf.
\end{Rk}
By exploiting Proposition~\ref{prop:2} we can now prove easily that:
\begin{Prop}
  \label{prop:3}
  The variety of $B$-sets is cartesian closed.
\end{Prop}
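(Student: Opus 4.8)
The plan is to work throughout with the equivalence-relation presentation of $B$-sets furnished by \cref{prop:2}, which reduces the construction of function-spaces to simple pointwise manipulations. Given $B$-sets $Y$ and $Z$, I would take the underlying set of $Z^Y$ to be the set of $B$-set homomorphisms $\phi \colon Y \rightarrow Z$, endowed with the relations
\[
  \phi \equiv_b \psi \quad :\Longleftrightarrow \quad \phi(y) \equiv_b \psi(y) \text{ for all } y \in Y\rlap{ .}
\]
The first task is to verify that this family satisfies conditions (i)--(iv) of \cref{prop:2}, so that it is a genuine $B$-set structure. Conditions (i), (ii) and (iii) transfer immediately from the corresponding conditions on $Z$ applied pointwise; in particular $\phi \equiv_1 \psi$ forces $\phi = \psi$, while $\phi \equiv_0 \psi$ always holds.

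The crux is condition (iv), and this is where I expect the only genuine work to lie. Given $\phi, \psi$ and $b$, the characterisation~\eqref{eq:4} shows that the \emph{only} possible witness is the pointwise element $\chi = \lambda y.\, b(\phi(y), \psi(y))$; the real question is whether this $\chi$ is again a homomorphism. By the last sentence of \cref{prop:2}, this amounts to showing that each decomposition operation $b(\thg, \thg) \colon Z \times Z \rightarrow Z$ preserves the relations $\equiv_c$---equivalently, that the operations of the theory commute with one another in the \emph{medial} sense $b(c(u,u'), c(v,v')) = c(b(u,v), b(u',v'))$. I would prove this directly from (i)--(iv): the four meets $b \wedge c$, $b \wedge c'$, $b' \wedge c$, $b' \wedge c'$ join to $1$, and using~\eqref{eq:4} together with (i) one checks that the two sides agree over each of them---for instance, both are $\equiv_{b \wedge c}$-related to $u$; the two sides are therefore $\equiv_1$-related by (iii), hence equal by (ii). With this mediality established, $\chi$ preserves each $\equiv_c$ because $\phi$ and $\psi$ do, so (iv) holds and $Z^Y$ is a $B$-set.

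It then remains to equip $Z^Y$ with the cartesian closed structure and to verify the universal property, all of which is routine given the pointwise description. Evaluation $\mathrm{ev} \colon Z^Y \times Y \rightarrow Z$ is $(\phi, y) \mapsto \phi(y)$, and the transpose of a homomorphism $f \colon X \times Y \rightarrow Z$ is $\bar f = \lambda x.\, \lambda y.\, f(x,y)$. That $\mathrm{ev}$, each $\bar f(x)$, and $\bar f$ itself are homomorphisms follows in each case from the componentwise description of $\equiv_c$ on products (so that $x \equiv_c x'$ and $y \equiv_c y'$ give $(x,y) \equiv_c (x',y')$) together with transitivity of the relations on $Z$; the identity $\mathrm{ev} \circ (\bar f \times 1) = f$ and the uniqueness of $\bar f$ are then immediate on unwinding the definitions. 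The main obstacle throughout is thus the mediality lemma of the previous paragraph; everything else is bookkeeping with the relations $\equiv_b$.
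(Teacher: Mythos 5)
Your proposal is correct and follows essentially the same route as the paper: the function-space is the set of homomorphisms $Y \rightarrow Z$ with the pointwise relations $\equiv_b$, the only non-trivial point is axiom (iv), and the verification that the pointwise witness is again a homomorphism comes down to the same computation over the partition $\{b \wedge c,\, b \wedge c',\, b' \wedge c,\, b' \wedge c'\}$ followed by (iii). The only cosmetic difference is that you package this computation as a general mediality identity $b(c(u,u'), c(v,v')) = c(b(u,v), b(u',v'))$, whereas the paper verifies the needed instance directly.
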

\begin{proof}
  Given $B$-sets $Y$ and $Z$, we consider the set $Z^Y$ of $B$-set
  homomorphisms $Y \rightarrow Z$. We claim this is a $B$-set
  under the pointwise equivalence relations $\equiv_b$. Only
  axiom (iv) is non-trivial. So suppose $f,g \in Z^Y$ and $b \in B$.
  For each $y \in Y$, we have $h(y) \in Z$ such that
  $h(y) \equiv_b f(y)$ and $h(y) \equiv_{b'} g(y)$, and so
  $h \colon Y \rightarrow Z$ will satisfy $h \equiv_b f$ and
  $h \equiv_{b'} g$ so long as it is in fact a homomorphism. So
  suppose that $y_1 \equiv_c y_2$ in $Y$; we must show
  $f(y_1) \equiv_c f(y_2)$. Since $h(y_i) \equiv_{b} f(y_i)$ and
  $f(y_1) \equiv_c f(y_2)$ (as $f$ is a homomorphism) we have by (i)
  that
  $h(y_1) \equiv_{b \wedge c} f(y_1) \equiv_{b \wedge c} f(y_2)
  \equiv_{b \wedge c} h(y_2)$; and similarly
  $h(y_1) \equiv_{b' \wedge c} h(y_2)$. Thus $h(y_1) \equiv_c h(y_2)$
  by (iii) and so $h$ is a homomorphism as desired. So $Z^Y$ is a
  $B$-set under the pointwise structure; whereupon it is clear that
  the usual evaluation map
  $\mathrm{ev} \colon Z^Y \times Y \rightarrow Z$ is a homomorphism,
  and that for any homomorphism $f \colon X \times Y \rightarrow Z$,
  the usual transpose $\bar f \colon X \rightarrow Z^Y$ is a
  homomorphism: so $Z^Y$ is a function-space as desired.
\end{proof}

\subsection{Varieties of $\BJ$-sets}
\label{sec:varieties-b_j-sets}

If $n$ is a finite set, then as
in~\cite[Proposition~4.3]{Johnstone1990Collapsed}, a $\P(n)$-set
structure on a set $X$ determines and is determined by equivalence
relations $\equiv_{\{1\}}, \dots, \equiv_{\{n\}}$ on $X$, for which
the quotient maps exhibit $X$ as the product of the sets
$X \quot \mathord{\equiv_{\{i\}}}$. Thus the category of $\P(n)$-sets
is equivalent to the category of $n$-fold cartesian products of sets.
However, this does not carry over to infinite sets $I$, for which a
$\P(I)$-set is more general than an $I$-fold cartesian product of
sets. The reason is that the notion of $\P(I)$-set does not pay regard
to the \emph{infinite} joins needed to construct each $A \subseteq I$
from atoms $\{i\}$. This can be rectified by equipping $\P(I)$ with a
suitable collection of ``well-behaved'' joins.

\begin{Defn}[Partition]
  \label{def:6}
  Let $B$ be a Boolean algebra and $b \in B$. A \emph{partition of
    $b$} is a subset $P \subseteq B \setminus \{0\}$ such that
  $\bigvee P = b$, and $c \wedge d = 0$ whenever $c \neq d \in P$. An
  \emph{extended partition of $b$} is a subset $P \subseteq B$
  (possibly containing $0$) satisfying the same conditions. If $P$ is
  an extended partition of $b$, then we write
  $P^- = P \setminus \{0\}$ for the corresponding partition. We 
  say merely ``partition'' to mean ``partition of $1$''.
\end{Defn}
\begin{Defn}[Zero-dimensional topology, Grothendieck Boolean algebra]
  \label{def:7}
  A \emph{zero-dimensional topology} on a Boolean
  algebra $B$ is a collection $\Top$ of partitions of $B$ which
  contains every finite partition, and satisfies:
  \begin{enumerate}[(i)]
  \item If $P \in \Top$, and $Q_b \in \J$ for each $b \in P$, then
    $P(Q) = \{ b \wedge c : b \in P, c \in Q_b\}^- \in \J$;
  \item If $P \in \J$ and $\alpha \colon P \rightarrow I$ is a
    surjective map, then each join $\bigvee \alpha^{-1}(i)$ exists and
    $\alpha_!(P) = \{\textstyle\bigvee \alpha^{-1}(i) : i \in I\} \in
    \Top$.
  \end{enumerate}
  A \emph{Grothendieck Boolean algebra} $\BJ$ is a Boolean algebra $B$
  with a zero-dimensional topology $\J$. A \emph{homomorphism} of
  Grothendieck Boolean algebras $f \colon \BJ \rightarrow C_\K$ is a
  Boolean homomorphism $f \colon B \rightarrow C$ such that $P \in \J$
  implies $f(P)^- \in \K$.
\end{Defn}

A zero-dimensional topology on $B$ is a special kind of Grothendieck
topology on $B$ in the sense of~\cite[\sec
II.2.11]{Johnstone1982Stone}, wherein the covers of $1 \in B$ are the
elements of $\J$, and the covers of an arbitrary $b \in B$ are given
by:
\begin{Defn}[Local partitions]
  \label{def:8}
  Let $\BJ$ be a Grothendieck Boolean algebra and $b \in B$. We write
  $\Top_b$ for the set of partitions of $b$ characterised by:
    \begin{equation*}
      P \in \J_b \iff P \cup \{b'\} \in \J \iff P \subseteq Q \in \J \text{ and } \bigvee P = b\rlap{ .}
    \end{equation*}
\end{Defn}
However, our presentation follows not~\cite{Johnstone1982Stone} but
rather~\cite{Name2013Boolean}---according to which, our Grothendieck
Boolean algebras are the ``subcomplete, locally refinable Boolean
partition algebras''. Via the general theory of~\cite[\sec
II.2.11]{Johnstone1982Stone}, any Grothendieck Boolean algebra
generates a \emph{locale} (= complete Heyting algebra) given by the
set $\mathrm{Idl}_\J(B)$ of ideals $I \subseteq B$ which are
\emph{$\J$-closed}, meaning that $b \in I$ as soon as $P \subseteq I$
for some $P \in \J_b$. The locales so arising are the \emph{strongly
  zero-dimensional locales} considered
in~\cite{Johnstone1997Cartesian}, and in fact, our category of
Grothendieck Boolean algebras is dually equivalent to the category of
strongly zero-dimensional
locales~\cite[Theorem~24]{Name2013Ultraparacompactness}.

\begin{Defn}[Variety of $\BJ$-sets]
  \label{def:9}
  Let $\BJ$ be a non-degenerate Grothendieck Boolean algebra. A
  \emph{$\BJ$-set} is a $B$-set $X$ endowed with a function
  $P \colon X^P \rightarrow X$ for each infinite $P \in \Top$,
  satisfying:
  \begin{equation}\label{eq:5}
    P(\lambda b.\, x) = x\ \ \ \ \  P(\lambda b.\, b(x_b, y_b)) = P(\lambda b.\, x_b)\ \ \ \ \ 
    b(P(x), x_b) = x_b \text{ $\forall b \in P$.}
  \end{equation}
  We write ${\BJ}\text-\cat{Set}$ for the variety of $\BJ$-sets.
\end{Defn}

Note that any non-degenerate Boolean algebra $B$ has a least
zero-dimensional topology given by the collection of all finite
partitions of $B$. In this case, $B_\J$-sets are just $B$-sets, so
that~\cref{def:9} includes~\cref{def:5} as a special case.

While the existence of functions like $P$ above looks like extra
structure on a $B$-set, it is in fact a property, rather like
the existence of
inverses in a monoid:
\begin{Prop}
  \label{prop:4}
  Let $B$ be a non-degenerate Boolean algebra and $P$ a partition of~$B$.
  \begin{enumerate}[(i)]
  \item An operation $P$ on a $B$-set $X$ satisfying the
    axioms~\eqref{eq:5} is unique if it exists.
  \item If $X$ and $Y$ are $B$-sets admitting the operation $P$, then
    any homomorphism of $B$-sets $f \colon X \rightarrow Y$ will
    preserve it.
  \end{enumerate}
\end{Prop}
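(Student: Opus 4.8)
The plan is to extract a single \emph{gluing principle} and then read off both claims from it. Call an element $u \in X$ a \emph{gluing} of a family $a = (a_b : b \in P) \in X^P$ if $u \equiv_b a_b$ for every $b \in P$; recalling~\eqref{eq:3}, this means $b(u, a_b) = a_b$ for all $b \in P$. The principle I would establish is: \emph{if $X$ admits an operation $P$ satisfying~\eqref{eq:5}, then every gluing $u$ of $a$ equals $P(a)$.} Note that the third axiom of~\eqref{eq:5} says precisely that $P(a)$ is itself a gluing of $a$, so this principle characterises $P(a)$ as \emph{the} gluing of $a$.

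To prove the principle, I would feed a gluing $u$ of $a$ into the second axiom of~\eqref{eq:5}, taken with its first family constant at $u$ and its second family equal to $a$. This yields $P(\lambda b.\, b(u, a_b)) = P(\lambda b.\, u)$, whose right-hand side is $u$ by the first axiom. Since $u$ is a gluing, $b(u, a_b) = a_b$ for every $b \in P$, so the family $\lambda b.\, b(u, a_b)$ is just $a$, and the left-hand side is $P(a)$; hence $u = P(a)$. The point worth flagging is that this argument sidesteps any appeal to joins of the relations $\equiv_b$: since $P$ may be infinite, one cannot simply combine the relations $u \equiv_b v$ over $b \in P$ using the finite-join property (iii) of Proposition~\ref{prop:2} to conclude $u \equiv_1 v$ and hence $u = v$. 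Substituting the second axiom of~\eqref{eq:5} for an infinitary join is what makes the argument go through, and I expect this to be the only real subtlety.

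With the principle in hand, both parts are immediate. For (i), if $\phi$ and $\psi$ both satisfy~\eqref{eq:5} on $X$, then the third axiom for $\phi$ makes $\phi(a)$ a gluing of $a$, so applying the principle to the operation $\psi$ gives $\phi(a) = \psi(a)$; as $a$ was arbitrary, $\phi = \psi$. For (ii), let $f \colon X \rightarrow Y$ be a homomorphism of $B$-sets with both $X$ and $Y$ admitting $P$. Since $f$ preserves each $\equiv_b$ by Proposition~\ref{prop:2}, and $P(a) \equiv_b a_b$ in $X$, we get $f(P(a)) \equiv_b f(a_b)$ in $Y$ for every $b \in P$; that is, $f(P(a))$ is a gluing of the family $\lambda b.\, f(a_b)$. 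Applying the principle in $Y$ then yields $f(P(a)) = P(\lambda b.\, f(a_b))$, which is exactly preservation of the operation $P$ by $f$.
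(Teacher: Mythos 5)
Your proposal is correct and is essentially the paper's own argument: the ``gluing principle'' you isolate ($P(a) = P(\lambda b.\, b(u,a_b)) = P(\lambda b.\, u) = u$ for any $u$ with $b(u,a_b)=a_b$) is exactly the chain of equalities the paper uses, once for (i) with $u = P'(a)$ and once for (ii) with $u = f(P(a))$ and the family $\lambda b.\, f(a_b)$. Extracting it as a named lemma is a tidy reorganisation but not a different route.
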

\begin{proof}
  For (i), suppose $P$ and $P'$ both satisfy the axioms
  of~\eqref{eq:5}. For any $x \in X^P$ we have
  $P(x) = P(\lambda b.\, b(P'(x), x_b)) = P(\lambda b. \, P'(x)) =
  P'(x)$ and so $P = P'$. For (ii), let $x \in X^P$ again; since
  $b(P(x), x_b) = x_b$ and $f$ preserves $b$, we have
  $b(f(P(x)), f(x_b)) = f(x_b)$, and so
  \begin{equation*}
    P(\lambda b.\, f(x_b)) = P(\lambda b.\, b(f(P(x)), f(x_b))) = P(\lambda b.\, f(P(x))) = f(P(x))\rlap{ .}\qedhere
  \end{equation*}
\end{proof}

It follows that $\BJ\text-\cat{Set}$ is a full subcategory
of $B\text-\cat{Set}$. We can also characterise this subcategory in
terms of the induced equivalence relations of
Proposition~\ref{prop:2}.

\begin{Prop}
  \label{prop:5}
  Let $\BJ$ be a non-degenerate Grothendieck Boolean algebra. A
  $B$-set $X$ is a $\BJ$-set if, and only if, for each infinite $P \in
  \J$ and $x \in X^P$, there is a unique element $z \in X$ with $z
  \equiv_b x_b$ for all $b \in P$.
\end{Prop}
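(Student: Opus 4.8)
The plan is to read the third axiom of~\eqref{eq:5}, namely $b(P(x), x_b) = x_b$ for all $b \in P$, as the assertion that $P(x) \equiv_b x_b$ for all $b \in P$, via the defining equivalence~\eqref{eq:3}. With this identification the operation $P$ is exactly a choice, for each family $x \in X^P$, of an element agreeing with $x_b$ modulo $\equiv_b$ on each block $b$ of the partition; the proposition then reduces to checking that the three axioms of~\eqref{eq:5} precisely encode the \emph{existence} and \emph{uniqueness} of such a choice. Since the operations $P$ are only imposed for infinite $P \in \J$, and the hypothesis quantifies over exactly these, the two sides match up without further bookkeeping.

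For the forward implication, assume $X$ is a $\BJ$-set. Existence of a $z$ with $z \equiv_b x_b$ for all $b \in P$ is immediate on setting $z = P(x)$, by the third axiom. For uniqueness, if $z'$ is any element with $z' \equiv_b x_b$, i.e.\ $b(z', x_b) = x_b$, for all $b \in P$, I would compute
\[
  P(x) = P(\lambda b.\, b(z', x_b)) = P(\lambda b.\, z') = z'\rlap{ ,}
\]
using $b(z', x_b) = x_b$ for the first equality, the second axiom of~\eqref{eq:5} for the second, and the first axiom on the constant family $z'$ for the third. This is the same manipulation as in the proof of~\cref{prop:4}(i).

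For the converse, assume the unique-existence property and define $P(x)$ to be the unique witnessing $z$. The first axiom of~\eqref{eq:5} holds because $x$ itself witnesses the constant family $\lambda b.\, x$, so uniqueness forces $P(\lambda b.\, x) = x$; and the third axiom is the defining property of $P(x)$. The one point needing an idea is the second axiom, $P(\lambda b.\, b(x_b, y_b)) = P(\lambda b.\, x_b)$: writing $z = P(\lambda b.\, x_b)$, so $z \equiv_b x_b$, I would note that $b(x_b, y_b) \equiv_b x_b$ by~\eqref{eq:4}, whence $z \equiv_b b(x_b, y_b)$ by transitivity for every $b \in P$; thus $z$ also witnesses the family $\lambda b.\, b(x_b, y_b)$, and uniqueness gives the desired equality. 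Everything else is a direct unwinding of~\eqref{eq:3}, \eqref{eq:4} and the axioms~\eqref{eq:5}, so this recognition of $b(x_b, y_b) \equiv_b x_b$ together with transitivity of $\equiv_b$ is the only real content.
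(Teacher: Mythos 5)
Your proposal is correct and follows essentially the same route as the paper: forward direction via the third axiom for existence and the computation $P(x) = P(\lambda b.\, b(z', x_b)) = P(\lambda b.\, z') = z'$ for uniqueness, and converse by defining $P(x)$ as the unique witness and checking the second axiom of~\eqref{eq:5} through the chain $P(\lambda b.\, b(x_b,y_b)) \equiv_b b(x_b,y_b) \equiv_b x_b \equiv_b P(\lambda b.\, x_b)$ plus unicity. No gaps.
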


\begin{proof}
  First, suppose $X$ is a $\BJ$-set. Given $P \in \J$ infinite and
  $x \in X^P$, we define $z = P(x)$; we now have $z \equiv_{b} x_b$ by
  the right-hand axiom in~\eqref{eq:5}, and if $z' \equiv_b x_b$ for
  each $b \in P$, then
  $z = P(x) = P(\lambda b.\, b(x_b, z')) = P(\lambda b.\, z') = z'$ by
  the other two axioms, so that $z$ is \emph{unique} with the desired
  property.

  Suppose conversely that the stated condition holds; we endow
  $X$ with $\BJ$-set structure. Given $P \in \J$ infinite and
  $x \in X^P$, we define $P(x)$ as the unique element such that
  $P(x) \equiv_b x_b$ for all $b \in P$. Since $y \equiv_b z$ just
  when $b(y,z) = z$ we have  $b(P(x), x_b) = x_b$ for all
  $b \in P$; we also have $P(\lambda b.\, x) = x$ as $x \equiv_b x$
  for each $x$. Finally, for the second axiom in~\eqref{eq:5},
  given $b \in P$ we have
  $P(\lambda b.\, b(x_b, y_b)) \equiv_b b(x_b, y_b) \equiv_b x_b
  \equiv_b P(\lambda b.\, x_b)$ and so
  $P(\lambda b.\, b(x_b, y_b)) =P(\lambda b.\, x_b)$ by unicity.
\end{proof}

There is some awkwardness above in the distinction between infinite
and non-infinite partitions; but in fact, this can be avoided.

\begin{Prop}
  \label{prop:6}
  Let $\BJ$ be a non-degenerate Grothendieck Boolean algebra. A family
  of equivalence relations $(\mathord{\equiv_b} : b \in B)$ on a set
  $X$ determines $\BJ$-set structure on $X$ if, and only if, it
  satisfies axiom (i) of Proposition~\ref{prop:2} together with
  \begin{enumerate}
    \looseness=-1
  \item[(ii)$'$] For any $P \in \J$ and $x \in X^P$, there is a unique
    $z \in X$ with $z \equiv_{b} x_b$ for all $b \in P$.
  \end{enumerate}
\end{Prop}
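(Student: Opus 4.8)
The plan is to deduce everything from the two characterisations already in hand. By Proposition~\ref{prop:2}, a family $(\mathord{\equiv_b})$ is induced by a (necessarily unique) $B$-set structure precisely when it satisfies (i)--(iv); and by Proposition~\ref{prop:5}, such a $B$-set is a $\BJ$-set precisely when the existence-and-uniqueness clause of (ii)$'$ holds for every \emph{infinite} $P \in \J$. So the task reduces to showing that, in the presence of (i), the single axiom (ii)$'$ (ranging over all $P \in \J$) is interchangeable with the conjunction of (ii), (iii), (iv) and that infinite-$P$ clause.

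For the forward implication, suppose $(\mathord{\equiv_b})$ comes from a $\BJ$-set. Then (i) is immediate from Proposition~\ref{prop:2}, and (ii)$'$ splits into two cases. For infinite $P \in \J$ it is exactly the condition of Proposition~\ref{prop:5}. For finite $P$ it is the assertion that a finite partition of $1$ induces a finite product decomposition of $X$; this I would obtain by a short induction on $\lvert P \rvert$, the base case $P = \{1\}$ being trivial and the two-block case being precisely (iv) (with uniqueness supplied by (ii) and (iii)), exactly as in the decomposition of~\cite[Proposition~4.3]{Johnstone1990Collapsed}.

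The substance is the reverse implication: assuming (i) and (ii)$'$, I must recover (ii), (iii) and (iv). Applying (ii)$'$ to the trivial partition $P = \{1\}$ gives, by uniqueness, that $z \equiv_1 x$ forces $z = x$, so $\equiv_1$ is equality; applying it to a two-block partition $\{b, b'\}$ (for $0 < b < 1$) yields the element required by (iv); and the totality of $\equiv_0$ then follows from (i), since for any $x,y$ the element $z$ with $z \equiv_b x$ and $z \equiv_{b'} y$ satisfies $z \equiv_0 x$ and $z \equiv_0 y$ by (i), whence $x \equiv_0 y$ by symmetry and transitivity. (When $B$ is the two-element algebra there is no such $b$, but then $\equiv_1$ being equality already forces $X$ to carry its unique $B$-set structure.)

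The main obstacle is axiom (iii), the closure of $\equiv$ under joins. Given $x \equiv_b y$ and $x \equiv_c y$, I would first reduce to the disjoint case by replacing $c$ with $c \wedge b'$ using (i), so that with $d = b \vee c$ the three blocks $b$, $c$, $d'$ form a partition of $1$. Feeding the two-block partition $\{d, d'\}$ into (ii)$'$ produces the unique $w$ with $w \equiv_d x$ and $w \equiv_{d'} y$; by (i) this $w$ also satisfies $w \equiv_b x$ and $w \equiv_c x$, and composing with the hypotheses gives $w \equiv_b y$, $w \equiv_c y$ and $w \equiv_{d'} y$. But $y$ itself plainly satisfies these same three relations, so the uniqueness clause of (ii)$'$ for the refined partition $\{b, c, d'\}$ forces $w = y$, and hence $x \equiv_d y$ as required (degenerate configurations among $b, c, d'$ being absorbed by the facts that $\equiv_1$ is equality and $\equiv_0$ is total). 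Once (i)--(iv) are in place, Proposition~\ref{prop:2} supplies the $B$-set structure, and the infinite-$P$ instances of (ii)$'$ promote it to a $\BJ$-set structure via Proposition~\ref{prop:5}.
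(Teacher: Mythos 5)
Your proof is correct and follows essentially the same route as the paper's: axioms (ii) and (iv) of Proposition~\ref{prop:2} are extracted from the partitions $\{1\}$ and $\{b,b'\}$, axiom (iii) is obtained by comparing the element supplied by $\{b\vee c,\,(b\vee c)'\}$ against $y$ across the refinement $\{b,\,b'\wedge c,\,b'\wedge c'\}$ (which, after your disjointness reduction, is literally the paper's partition), and the converse is the same finite-partition induction combined with Proposition~\ref{prop:5}. The one point worth flagging is that your extra care over the totality of $\equiv_0$ exposes a degenerate corner that the paper also elides: when $B=\{0,1\}$ there is no intermediate $b$, and conditions (i) and (ii)$'$ genuinely do not force $\equiv_0$ to be total (take $\equiv_0$ to be equality on a two-element $X$), so your parenthetical --- which shows a $B$-set structure exists but not that it induces the \emph{given} $\equiv_0$ --- does not quite close that case, any more than the paper's ``follow easily'' does.
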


\begin{proof}
  Suppose first that (i) and (ii)$'$ hold. It suffices to show that
  axioms (ii)--(iv) of Proposition~\ref{prop:2} also hold. (ii) and
  (iv) follow easily on applying (ii)$'$ to the partitions $\{1\}$ and
  $\{b, b'\}$ respectively. As for (iii), given its hypotheses, let
  $z$ be unique by (ii)$'$ such that $z \equiv_{b \vee c} x$ and
  $z \equiv_{b' \wedge c'} y$. Then using (i) and the hypotheses, we
  have $z \equiv_b x \equiv_b y$ and
  $z \equiv_{b' \wedge c} x \equiv_{b' \wedge c} y$. So $z$ agrees
  with $y$ on each part of $\{b, b' \wedge c, b' \wedge c'\}$ and so
  $z = y$; whence $y = z \equiv_{b \vee c} x$ as desired.
  
  For the converse, we must show that (i)--(iv) of
  Proposition~\ref{prop:2} imply (ii)$'$ for any \emph{finite}
  partition $P = \{b_1, \dots, b_k\}$. The unicity in (ii)$'$ holds by
  repeatedly applying axiom (iii) then axiom (i). For existence, the
  base case $k = 1$ is trivial; so let us assume the result for $k-1$,
  and prove it for $k$. By induction, we find $y$ such that
  $y \equiv_{b_1 \vee b_2} x_2$ and $y \equiv_{b_i} x_i$ for $i > 2$;
  now by (iv) we can find $z$ such that $z \equiv_{b_1} x_1$ and
  $z \equiv_{b_1'} y$. Since $b_i \leqslant b_1'$ for $i \geqslant 2$
  also $z \equiv_{b_i} y \equiv_{b_i} x_i$ for all
  $2 \leqslant i \leqslant k$, as desired.
\end{proof}

Again, using the equalities $\equiv_b$ makes it easy to show that
$\BJ$-sets are a cartesian closed variety. First we need a preparatory
lemma.

\begin{Lemma}
  \label{lem:1}
  Let $X$ be a $\BJ$-set and let $x,y \in X$.
  \begin{enumerate}[(i)]
  \item   For any $P \in \J$ and $b \in B$, if
  $x \equiv_{b \wedge c} y$ for all $c \in P$, then $x \equiv_b y$.
  \item For any $P \in \J_b$, if
  $x \equiv_{c} y$ for all $c \in P$, then $x \equiv_{b} y$.
  \end{enumerate}
\end{Lemma}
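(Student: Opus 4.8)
The plan is to reduce each clause to the uniqueness of ``gluing'' along a partition in $\J$, as recorded in Proposition~\ref{prop:6}. Throughout, the key manoeuvre is the following: to prove an assertion of the form $x \equiv_b y$, I will instead show that $b(x,y) = y$; and to establish this, I will produce a partition $R \in \J$ with $b(x,y) \equiv_d y$ for every $d \in R$, and then invoke the uniqueness clause (ii)$'$ of Proposition~\ref{prop:6}. Indeed, the constant family $(y)_{d \in R}$ has $y$ as its unique glued element, so any element that is $\equiv_d y$ on every part of $R$ must coincide with $y$.

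First I would prove (i). Set $z = b(x,y)$, so that $z \equiv_b x$ and $z \equiv_{b'} y$ by~\eqref{eq:4}. For each $c \in P$, the inequality $b \wedge c \leqslant b$ together with Proposition~\ref{prop:2}(i) gives $z \equiv_{b \wedge c} x$, and since $x \equiv_{b \wedge c} y$ by hypothesis, transitivity yields $z \equiv_{b \wedge c} y$; dually, $b' \wedge c \leqslant b'$ gives $z \equiv_{b' \wedge c} y$. Thus $z \equiv_d y$ for every part $d$ of the common refinement $R = \{\,b \wedge c,\, b' \wedge c : c \in P\,\}^-$ of $P$ and $\{b, b'\}$. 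This $R$ lies in $\J$ by axiom (i) of Definition~\ref{def:7} (take the outer partition $P$, and the finite partition $\{b,b'\}$ as the inner partition on each of its parts), and $\bigvee R = 1$. Applying the uniqueness described above forces $z = y$, that is, $x \equiv_b y$.

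Then (ii) follows formally from (i). Given $P \in \J_b$, Definition~\ref{def:8} tells us that $P \cup \{b'\} \in \J$, so I would apply (i) with this partition in place of its $P$, keeping the same $b$. The hypothesis of (i) then asks that $x \equiv_{b \wedge c} y$ for every $c \in P \cup \{b'\}$: for $c \in P$ we have $c \leqslant b$ and hence $b \wedge c = c$, so this is precisely the given $x \equiv_c y$; while for $c = b'$ we have $b \wedge b' = 0$, and $x \equiv_0 y$ holds automatically by Proposition~\ref{prop:2}(ii). Hence (i) delivers $x \equiv_b y$, as required.

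The equivalence-relation manipulations are routine bookkeeping with Proposition~\ref{prop:2}(i)--(ii) and transitivity; the one genuinely load-bearing point---and the step I expect to need the most care---is the verification that the relevant families of meets really do assemble into partitions that lie in $\J$, so that the uniqueness clause (ii)$'$ is actually applicable. For (i) this is exactly closure of $\J$ under common refinement (axiom (i) of Definition~\ref{def:7}); for (ii) it is built directly into the definition of $\J_b$.
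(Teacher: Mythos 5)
Your proof is correct and follows essentially the same route as the paper's: both arguments reduce $x \equiv_b y$ to the uniqueness of gluing along a refined partition in $\J$ obtained from axiom (i) of Definition~\ref{def:7}, and both derive (ii) from (i) by passing to $P \cup \{b'\}$ and using $b \wedge c = c$ for $c \in P$. The only (cosmetic) difference is that you show $z = b(x,y)$ agrees with $y$ on the full common refinement $\{b \wedge c,\, b' \wedge c : c \in P\}^-$, whereas the paper shows the element $z$ with $z \equiv_b y$, $z \equiv_{b'} x$ agrees with $x$ on $(\{b \wedge c : c \in P\} \cup \{b'\})^-$.
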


\begin{proof}
  For (i), first note
  $b \wedge P = (\{ b \wedge c : c \in P\} \cup \{b'\})^- \in \Top$ by
  applying condition (i) for a zero-dimensional topology with
  $P = \{b, b'\}$, $Q_b = P$ and $Q_{b'} = \{1\}$. Now let $z \in X$
  be unique such that $z \equiv_b y$ and $z \equiv_{b'} x$. For each
  $c \in P$ we have $z \equiv_{b \wedge c} y \equiv_{b \wedge c} x$,
  and so $z$ agrees with $x$ on each part of the partition
  $b \wedge P$ in $\J$. Thus $z = x$ and so $x = z \equiv_b y$ as
  desired. For (ii), apply (i) with $b = \bigvee P$.
\end{proof}

\begin{Rk}
  \label{rk:7}
  Note that part (ii) of this Lemma asserts that, for all elements
  $x,y$ in a $\BJ$-set $X$, the ideal
  $\sheq x y = \{b \in B : x \equiv_b y\}$ of Remark~\ref{rk:5} is a
  $\J$-closed ideal.
\end{Rk}

\begin{Prop}
  \label{prop:7}
  The variety of $\BJ$-sets is cartesian closed.
\end{Prop}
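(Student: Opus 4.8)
The plan is to mirror the argument for Proposition~\ref{prop:3}, using the characterisation of $\BJ$-sets among $B$-sets provided by Proposition~\ref{prop:5}. Since $\BJ\text-\cat{Set}$ is a full subcategory of $B\text-\cat{Set}$ by Proposition~\ref{prop:4}, the underlying set of the sought function-space should again be the set $Z^Y$ of $B$-set homomorphisms $Y \rightarrow Z$, which by Proposition~\ref{prop:3} is already a $B$-set under the pointwise equivalence relations $\equiv_b$. So the real work is to promote this $B$-set to a $\BJ$-set, for which, by Proposition~\ref{prop:5}, it suffices to show that for each infinite $P \in \J$ and each family $f \in (Z^Y)^P$ there is a unique $h \in Z^Y$ with $h \equiv_b f_b$ for all $b \in P$.

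To build $h$ I would define it pointwise: for each $y \in Y$, since $Z$ is a $\BJ$-set, Proposition~\ref{prop:5} supplies a unique $h(y) \in Z$ with $h(y) \equiv_b f_b(y)$ for all $b \in P$. By construction $h \equiv_b f_b$ holds pointwise, and the uniqueness of $h$ as an element of $Z^Y$ follows at once from the pointwise uniqueness of each $h(y)$. It then remains only to check that $h$ is genuinely a homomorphism, that is, that it preserves each $\equiv_c$.

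This homomorphism check is the main obstacle, and it is precisely where the infinitary theory goes beyond Proposition~\ref{prop:3}. Fix $c \in B$ and suppose $y_1 \equiv_c y_2$ in $Y$. For each $b \in P$, combining $h(y_i) \equiv_b f_b(y_i)$ with the fact that $f_b$ is a homomorphism (so $f_b(y_1) \equiv_c f_b(y_2)$), and invoking axiom (i) of Proposition~\ref{prop:2} to pass to $c \wedge b$, I obtain
\[
  h(y_1) \equiv_{c \wedge b} f_b(y_1) \equiv_{c \wedge b} f_b(y_2) \equiv_{c \wedge b} h(y_2),
\]
so that $h(y_1) \equiv_{c \wedge b} h(y_2)$ for every $b \in P$. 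Now Lemma~\ref{lem:1}(i)---applied with the partition $P$ and the element $c$---upgrades this family of local agreements to the global statement $h(y_1) \equiv_c h(y_2)$, which is exactly what is required. Hence $h$ preserves each $\equiv_c$, completing the verification that $Z^Y$ is a $\BJ$-set.

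Finally, I would observe that no further calculation is needed to identify $Z^Y$ as the function-space in $\BJ\text-\cat{Set}$. The evaluation map $\mathrm{ev} \colon Z^Y \times Y \rightarrow Z$ and, for any homomorphism $f \colon X \times Y \rightarrow Z$, the transpose $\bar f \colon X \rightarrow Z^Y$ are already $B$-set homomorphisms by Proposition~\ref{prop:3}; since their sources and targets are $\BJ$-sets, they are automatically $\BJ$-set homomorphisms by Proposition~\ref{prop:4}(ii). The required equation $\mathrm{ev} \circ (\bar f \times 1) = f$ and the uniqueness of $\bar f$ then hold in $\BJ\text-\cat{Set}$ because they hold in $B\text-\cat{Set}$ and $\BJ\text-\cat{Set}$ is full therein, so that $Z^Y$ is indeed a function-space and the variety is cartesian closed.
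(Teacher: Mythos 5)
Your proposal is correct and follows essentially the same route as the paper: take the $B$-set exponential $Z^Y$ of Proposition~\ref{prop:3}, define the required operation for an infinite $P \in \J$ pointwise via Proposition~\ref{prop:5}, and verify the homomorphism property by establishing $\equiv_{b\wedge c}$-agreement for each $b \in P$ and then applying Lemma~\ref{lem:1}(i). Your closing observation that the evaluation and transposition data carry over automatically by fullness of $\BJ\text-\cat{Set}$ in $B\text-\cat{Set}$ is exactly the point the paper leaves implicit.
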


\begin{proof}
  It suffices to show that for $\BJ$-sets $Y$ and $Z$, the $B$-set
  exponential $Z^Y$ of~\cref{prop:7} is itself a $\BJ$-set. So
  suppose given a partition $P \in \J$ and a family of homomorphisms
  $f_b \colon Y \rightarrow Z$ for each $b \in P$, and define
  $g \colon Y \rightarrow Z$ by the property that
  $g(y) \equiv_b f_b(y)$ for each $b \in P$; this $g$ will be unique
  such that $g \equiv_b f_b$ for each $b \in P$, so long as it is a
  $B$-set map. So suppose that $y_1 \equiv_c y_2$; we must show that
  $g(y_1) \equiv_c g(y_2)$, for which by the preceding lemma it
  suffices to show that $g(y_1) \equiv_{b \wedge c} g(y_2)$ for all
  $b \in P$: and this follows exactly as in~\cref{prop:3}.
\end{proof}

\subsection{Theories of $B$-sets and $\BJ$-sets}
\label{sec:theories-b-sets}

To conclude this section, we describe algebraic theories which realise
the varieties of $B$-sets and $\BJ$-sets.

\begin{Defn}
  \label{def:10}
  Let $\BJ$ be a non-degenerate Grothendieck Boolean algebra. A
  \emph{$\BJ$-valued distribution} on a set $I$ is a function
  $\omega \colon I \rightarrow B$ whose restriction to
  $\mathrm{supp}(\omega) = \{i \in I : \omega(i) \neq 0\}$ is an
  injection onto a partition in $\J$.
  The \emph{theory of $\BJ$-sets} $\mathbb{T}_{\BJ}$ has $T_{\BJ}(I)$
  given by the set of $\BJ$-valued distributions on $I$; the
  projection element $\pi_i \in T_{\BJ}(I)$ given by $\pi_i(j) = 1$ if
  $i = j$ and $\pi_i(j) = 0$ otherwise; and the composition
  $\omega(\gamma) \in T_{\BJ}(J)$ of $\omega \in T_{\BJ}(I)$ and
  $\gamma \in T_{\BJ}(J)^I$ given by
  $\omega(\gamma)(j) = \textstyle\bigvee_{i \in I} \omega(i) \wedge
  \gamma_i(j)$, where this join exists using axioms (ii) and (iii) for
  a zero-dimensional topology.

  When $\J$ is the topology of finite partitions, we will write
  $\mathbb{T}_B$ in place of $\mathbb{T}_{B_\J}$ and call it the
  \emph{theory of $B$-sets}. In this case, a $B$-valued distribution
  is simply an $\omega \colon I \rightarrow B$ whose support injects
  onto a \emph{finite} partition of~$B$.
\end{Defn}

\begin{Prop}
  \label{prop:8}
  For any non-degenerate Grothendieck Boolean algebra $\BJ$ the theory
  of $\BJ$-sets realises the variety of $\BJ$-sets. In particular, for
  any non-degenerate Boolean algebra, the theory of $B$-sets realises
  the variety of $B$-sets.
\end{Prop}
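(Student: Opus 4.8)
The plan is to exhibit, for each set $X$, a bijection between $\mathbb{T}_{\BJ}$-model structures on $X$ and $\BJ$-set structures on $X$, natural in $X$, so as to constitute a concrete isomorphism $\mathbb{T}_{\BJ}\text-\cat{Mod} \cong \BJ\text-\cat{Set}$; the final assertion is then the special case where $\J$ is the topology of finite partitions. The observation organising everything is that the interpretation of a general distribution is forced by that of a few distinguished ones. For $b \in B$ write $\beta_b = (b,b') \in T_{\BJ}(2)$, and for $P \in \J$ write $\rho_P = (\lambda b.\, b) \in T_{\BJ}(P)$, the inclusion $P \hookrightarrow B$, whose support is all of $P$ (partitions exclude $0$) and whose image is the partition $P$ itself. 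A direct computation of substitution in $\mathbb{T}_{\BJ}$ shows that an arbitrary $\omega \in T_{\BJ}(I)$ with support $S$ and image partition $P = \omega(S)$ satisfies $\omega = \rho_P(\lambda p.\, \pi_{i(p)})$, where $i(p) \in S$ is the unique index with $\omega(i(p)) = p$; thus every operation of $\mathbb{T}_{\BJ}$ is a substitution of projections into some $\rho_P$.

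Given a $\mathbb{T}_{\BJ}$-model $\mdl[X]$, I define a $B$-set operation $b(x,y) = \dbr{\beta_b}(x,y)$ and, for infinite $P \in \J$, an operation $P(x) = \dbr{\rho_P}(x)$. Each $B$-set axiom of~\eqref{eq:2} and each $\BJ$-set axiom of~\eqref{eq:5} then reduces, via the model axioms of~\cref{def:2}, to an identity between distributions in $\mathbb{T}_{\BJ}$---for instance $(b \wedge c)(x,y) = b(c(x,y),y)$ corresponds to $\beta_{b \wedge c} = \beta_b(\beta_c(\pi_1,\pi_2),\pi_2)$---and each such identity is verified by evaluating both sides as functions into $B$ using the substitution formula of~\cref{def:10}. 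This produces a $\BJ$-set structure on $X$.

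In the other direction, given a $\BJ$-set $X$ with its equivalence relations $(\mathord{\equiv_b})$ from~\cref{prop:2}, I define $\dbr{\omega}(a)$ for $\omega \in T_{\BJ}(I)$ and $a \in X^I$ to be the unique element $z$ with $z \equiv_{\omega(i)} a_i$ for all $i \in I$; existence and uniqueness come from~\cref{prop:6}(ii)$'$ applied to the partition $\omega(S) \in \J$, the indices outside $\mathrm{supp}(\omega)$ imposing no condition since $\equiv_0$ is total. The projection axiom is immediate because $\equiv_1$ is equality, and the substitution axiom is the crux: writing $v = \dbr{\omega}(\lambda i.\, \dbr{\gamma_i}(x))$ and $b_j = \omega(\gamma)(j) = \bigvee_i \omega(i) \wedge \gamma_i(j)$, axiom~(i) of~\cref{prop:2} gives $v \equiv_{\omega(i) \wedge \gamma_i(j)} x_j$ for every $i$. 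One then checks that $\{\omega(i) \wedge \gamma_i(j) : i \in I\}^-$ lies in $\J_{b_j}$: its members are pairwise disjoint because $\omega$ has pairwise disjoint nonzero values, its join is $b_j$, and it is contained in the partition $P(Q) \in \J$ obtained from axiom~(i) for a zero-dimensional topology applied to $P = \omega(S)$ and the image partitions of the $\gamma_i$. Hence~\cref{lem:1}(ii) upgrades the above to $v \equiv_{b_j} x_j$, whence $v = \dbr{\omega(\gamma)}(x)$ by uniqueness.

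Finally I verify that the two assignments are mutually inverse and respect homomorphisms. Passing from a $\BJ$-set through the model construction and back, $\dbr{\beta_b}(x,y)$ is the unique $z$ with $z \equiv_b x$ and $z \equiv_{b'} y$, namely $b(x,y)$ by~\eqref{eq:4}, and $\dbr{\rho_P}(x)$ is the patching operation $P$; conversely, starting from a model, the factorisation $\omega = \rho_P(\lambda p.\,\pi_{i(p)})$ together with the model axioms shows that $\dbr{\omega}$ is determined by the $\dbr{\rho_P}$, so the round trip is the identity. A function $f \colon X \to Y$ is a $\mathbb{T}_{\BJ}$-homomorphism if and only if it preserves every $\dbr{\omega}$, if and only if it preserves the $\dbr{\beta_b}$ and $\dbr{\rho_P}$, if and only if it is a $\BJ$-set homomorphism (by~\cref{prop:2} and~\cref{prop:4}); since both structures have the same underlying set, this is a concrete isomorphism. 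I expect the main obstacle to be the substitution axiom in the $\BJ$-set-to-model direction, specifically confirming that $\{\omega(i) \wedge \gamma_i(j)\}^-$ is a local partition in $\J_{b_j}$ so that~\cref{lem:1}(ii) may be applied.
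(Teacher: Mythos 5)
Your proposal is correct and follows essentially the same route as the paper: the same distinguished operations (your $\beta_b,\rho_P$ are the paper's $\omega_b,\omega_P$), the same definition of $\dbr{\omega}(x)$ as the unique element with $\dbr{\omega}(x)\equiv_{\omega(i)}x_i$, and the same appeal to Lemma~\ref{lem:1}(ii) for the substitution axiom. Your explicit check that $\{\omega(i)\wedge\gamma_i(j)\}^-$ lies in $\J_{b_j}$ via axiom (i) for a zero-dimensional topology supplies a detail the paper leaves implicit, and your round-trip argument via the factorisation $\omega=\rho_P(\lambda p.\,\pi_{i(p)})$ is only a cosmetic variant of the paper's use of the identity $\omega(\lambda i.\,\omega_{\omega(i)}(x_i,y_i))=\omega(\lambda i.\,x_i)$.
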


\begin{proof}
  Suppose first that $X$ is a $\mathbb{T}_{\BJ}$-model. For each
  $b \in B$, we have the element $\omega_b \in T_{\BJ}(2)$ with
  $\omega_b(1) = b$ and $\omega_b(2) = b'$, while for each infinite
  partition $P \in \J$, we have the element $\omega_P \in T_{\BJ}(P)$
  given by the inclusion map $P \hookrightarrow B$. It is
  straightforward to verify that these elements satisfy the axioms
  of~\eqref{eq:2} and~\eqref{eq:5} in $\mathbb{T}_{\BJ}$, so their
  interpretations equip any $\mathbb{T}_{\BJ}$-model with the
  structure of a $\BJ$-set.

  Suppose conversely that $X$ is a $\BJ$-set, and let
  $\omega \in T_{\BJ}(I)$ and $x \in X^I$. Since $(\im \omega)^-$ is a
  partition in $\J$, we may use the $\BJ$-set structure of $X$ to
  define $\dbr{\omega}(x)$ as the unique element with
  $\dbr{\omega}(x) \equiv_{\omega(i)} x_i$ for all
  $i \in \mathrm{supp}(\omega)$. We now check the two
  $\mathbb{T}_{\BJ}$-model axioms. First, we have
  $\dbr{\pi_i}(x) \equiv_1 x_i$, i.e., $\dbr{\pi_i}(x) = x_i$. Second,
  given $\omega \in T_B(I)$ and $\gamma \in T_B(J)^I$ and $x \in X^J$,
  we have
  $\dbr{\omega}(\lambda i.\, \dbr{\gamma_i}(x))\equiv_{\omega(i)}
  \dbr{\gamma_i}(x)$ and $\dbr{\gamma_i}(x) \equiv_{\gamma_i(j)} x_j$
  for all $i \in I, j \in J$, and so
  $\dbr{\omega}(\lambda i.\, \dbr{\gamma_i}(x)) \equiv_{\omega(i)
    \wedge \gamma_i(j)} x_j$. Now~\cref{lem:1}(ii) yields
  \begin{equation*}
    \dbr{\omega}(\lambda i.\, \dbr{\gamma_i}(x)) \equiv_{\bigvee_i
      \omega(i) \wedge \gamma_i(j)} x_j \qquad \text{for all $j \in J$;}
  \end{equation*}
  but $\dbr{\omega(\gamma)}(x)$ is unique with this property, whence
  $\dbr{\omega}(\lambda i.\, \dbr{\gamma_i}(x)) =
  \dbr{\omega(\gamma)}(x)$.

  Starting from a $\BJ$-set structure on $X$, the induced
  $\mathbb{T}_{\BJ}$-model on $X$ satisfies
  $\dbr{\omega_b}(x,y) \equiv_b x$ and
  $\dbr{\omega_b}(x,y) \equiv_{b'} y$, i.e.,
  $\dbr{\omega_b}(x,y) = b(x,y)$, and also satisfies
  $\dbr{\omega_P}(x) \equiv_b x_b$ for all $b \in P$, i.e.,
  $\dbr{\omega_P}(x) = P(x)$, and so yields the original $B$-set
  structure back. Conversely, given a $\mathbb{T}_{\BJ}$-model
  structure $\dbr{\thg}$, the model structure $\dbr{\thg}'$ induced
  from the associated $\BJ$-set satisfies
  $\dbr{\omega}'(x) \equiv_{\omega(i)} x_i$, i.e.,
  $\dbr{\smash{\omega_{\omega(i)}}}(\dbr{\omega}'(x), x_i) = x_i$ for
  each $i$. But by an easy calculation, we have
  $\omega(\lambda i.\, \omega_{\omega(i)}(x_i, y_i)) = \omega(\lambda
  i.\, x_i)$ in $\mathbb{T}_B(2 \times I)$, and so
  \begin{equation*}
    \dbr{\omega}(x) = \dbr{\omega}(\lambda i.\, \dbr{\smash{\omega_{\omega(i)}}}(\dbr{\omega}'(x), x_i)) =
    \dbr{\omega}(\lambda i.\, \dbr{\omega}'(x), x_i) = \dbr{\omega}'(x)\rlap{ .}\qedhere
  \end{equation*}
\end{proof}
\begin{Rk}
  \label{rk:1}
  We can read off from this proof that the free $\BJ$-set on a set $X$
  is given by $T_{\BJ}(X)$, endowed with the $\BJ$-set structure in
  which $\omega \equiv_b \gamma$ just when
  $b \wedge \omega(x) = b \wedge \gamma(x)$ for all $x \in X$. Given a
  partition $P \in \J$ and family of elements
  $\omega \in T_{\BJ}(X)^P$, the element $P(\omega) \in T_{\BJ}(X)$ is
  given by $P(\omega)(x) = \bigvee_{b \in P} b \wedge \omega_b(x)$.
  The function $X \rightarrow T_{\BJ}(X)$ exhibiting $T_{\BJ}(X)$ as
  free on $X$ is given by $x \mapsto \pi_x$.

  As a special case, the free $\BJ$-set on two generators can
  be identified with $B$ itself (with generators $0$ and $1$) under
  the $\BJ$-set structure of ``conditioned disjunction'':
  \begin{equation*}
    b(c,d) = (b \wedge c) \vee (b' \wedge d) \qquad \text{and} \qquad P(\lambda b.\, c_b) = \textstyle\bigvee_{b \in P} b \wedge c_b\rlap{ .}
  \end{equation*}
\end{Rk}

\section{Hyperaffine theories}
\label{sec:unary-algebr-theor}

In this section, we describe, following~\cite{Johnstone1990Collapsed,
  Johnstone1997Cartesian}, the syntactic characterisation of theories
of $\BJ$-sets as (non-degenerate) \emph{hyperaffine} algebraic
theories, with the $B$-sets matching under this correspondence with
the \emph{finitary} hyperaffine theories.

As is well known, an algebraic theory is \emph{finitary} if it
corresponds to a finitary variety: which is to say that for
each $t \in T(I)$, there exist $i_1, \dots, i_n \in I$ and
$u \in T(n)$ such that $t(x) = u(x_{i_1}, \dots, x_{i_n})$. The
notion of hyperaffine algebraic theory is perhaps slightly less familiar:

\begin{Defn}[Hyperaffine operation, hyperaffine algebraic theory]
  \label{def:11}
  Let $\mathbb{T}$ be an algebraic theory. We say
  that $t \in T(I)$ is \emph{affine} if $t(\lambda i.\, x) = x$ in $T(1)$, and
  \emph{hyperaffine} if also
  $t(\lambda i.\, t(\lambda j.\, x_{ij})) = t(\lambda i.\, x_{ii})$ in
  $T(I \times I)$. We say that $\mathbb{T}$ is \emph{hyperaffine} if
  each of its operations is so. 
\end{Defn}

Our objective now is to prove:

\begin{Prop}
  \label{prop:9}
  A non-degenerate algebraic theory $\mathbb{T}$ is hyperaffine if,
  and only if, it is isomomorphic to $\mathbb{T}_{\BJ}$ for some non-degenerate
  Grothendieck Boolean algebra $\BJ$; and it is finitary and
  hyperaffine if, and only if, it is isomorphic to some $\mathbb{T}_B$.
\end{Prop}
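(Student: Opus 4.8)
The plan is to prove both equivalences at once by showing that the non-degenerate hyperaffine theories are exactly the theories $\mathbb{T}_{\BJ}$, with the finitary ones corresponding to the $\mathbb{T}_B$. One direction is already essentially in hand: by Proposition~\ref{prop:8}, $\mathbb{T}_{\BJ}$ realises the variety of $\BJ$-sets, so I only need to check that the operations of $\mathbb{T}_{\BJ}$ are hyperaffine. This is a direct computation with $\BJ$-valued distributions. Affineness of $\omega \in T_{\BJ}(I)$ says $\omega(\lambda i.\, \pi) = \pi$ in $T_{\BJ}(1)$, which holds because $\bigvee_{i}\omega(i) = 1$ (the support injects onto a partition of $1$). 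For hyperaffineness I substitute $\omega$ into itself and compute $\omega(\lambda i.\,\omega(\lambda j.\, \pi_{(i,j)}))$ using the substitution formula; evaluating the double join $\bigvee_i \omega(i)\wedge\bigvee_j \omega(j)\wedge[\,\cdot\,]$ and using that the $\omega(i)$ are pairwise disjoint collapses the off-diagonal terms, leaving exactly $\omega(\lambda i.\,\pi_{(i,i)})$. Finiteness is preserved, so finitary $\mathbb{T}_B$ gives finitary hyperaffine theories.

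For the converse I start from an arbitrary non-degenerate hyperaffine theory $\mathbb{T}$ and must manufacture a Grothendieck Boolean algebra $\BJ$ from it, then exhibit an isomorphism $\mathbb{T}\cong\mathbb{T}_{\BJ}$. The natural candidate for the Boolean algebra is the set $T(2)$ of binary operations: I would set $B = T(2)$ and define the lattice operations and negation so as to mirror the $B$-set axioms of~\eqref{eq:2}. Concretely, given $b \in T(2)$ I read $b$ as ``$b(x,y)$'' and define $b' = b(\pi_2,\pi_1)$, $b\wedge c = b(c(\pi_1,\pi_2),\pi_2)$, with $0 = \pi_2$ and $1 = \pi_1$; the affineness and hyperaffineness of each $b$, together with non-degeneracy ($\pi_1 \neq \pi_2$), should force these to satisfy the Boolean algebra axioms. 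For the topology $\J$, I would take $\J$ to consist of those partitions $P$ of $B$ that arise as ``images'' of operations: a partition $P = \{\omega(i) : i\}$ lies in $\J$ exactly when there is $\omega \in T(I)$ realising it, with the distribution recovered as $\omega(i) = \omega(\lambda j.\, [\,j=i\,])$, i.e.\ by substituting $\pi_1$ in slot $i$ and $\pi_2$ elsewhere. The zero-dimensional topology axioms (i) and (ii) of~\cref{def:7} should then translate directly into the substitution and projection axioms of Definition~\ref{def:1}, using hyperaffineness to guarantee the required disjointness and to make the relevant joins exist.

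The main obstacle, and the heart of the argument, is showing that this assignment $\mathbb{T}\to\BJ$ really does reconstruct $\mathbb{T}$ up to isomorphism—i.e.\ that \emph{every} operation $t \in T(I)$ is a $\BJ$-valued distribution and that substitution in $\mathbb{T}$ matches the join-formula substitution of $\mathbb{T}_{\BJ}$. The key lemma I expect to need is that each $t \in T(I)$ is completely determined by its family of ``restrictions'' $t_i = t(\lambda j.\,[\,j=i\,]) \in T(2)=B$, via a reconstruction of the form $t = \omega_{(t_i)}$; proving this amounts to showing that a hyperaffine operation is, like a decomposition operation, recoverable from its behaviour on the two-valued inputs. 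Hyperaffineness is exactly the identity that makes the off-diagonal substitution terms vanish, so I would use it repeatedly to peel off one coordinate at a time, together with an induction (finite first, then a $\J$-closure/limit argument in the infinitary case paralleling Proposition~\ref{prop:6} and~\cref{lem:1}). Once this reconstruction is established, checking that it is a theory homomorphism in both directions and that the two constructions are mutually inverse is routine bookkeeping, and the finitary statement drops out by observing that $\mathbb{T}$ is finitary precisely when every partition in the induced $\J$ is finite.
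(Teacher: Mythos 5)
Your skeleton coincides with the paper's: you take $B = T(2)$ with exactly the operations of \cref{prop:10}, let $\J$ consist of the partitions realised by operations of $\mathbb{T}$ as in \cref{prop:12}, and build the isomorphism $\mathbb{T} \cong \mathbb{T}_{\BJ}$ out of the binary reducts $t^{(i)} = t(\lambda j.\,[\,j=i\,])$, with the determination of $t$ by its reducts as the key lemma (this is \cref{prop:11}(i) feeding into \cref{prop:13}); the ``if'' direction is also the paper's computation. However, two load-bearing steps are left at the level of ``should work'', and in both the missing ingredient is specific enough to count as a gap. The first is that verifying $T(2)$ is a Boolean algebra is \emph{not} a formal consequence of each $b$ being hyperaffine and $\pi_1 \neq \pi_2$: already commutativity of your $\wedge$, i.e.\ $b(c(x,y),y) = c(b(x,y),y)$, needs the fact that any two hyperaffine operations commute with one another, $t(\lambda i.\, u(\lambda j.\, x_{ij})) = u(\lambda j.\, t(\lambda i.\, x_{ij}))$. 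This is the paper's Lemma~\ref{lem:11}, proved by instantiating the hyperaffineness of the composite $t(\lambda i.\, u(\lambda j.\, x_{ij}))$ at a well-chosen substitution; you never isolate it, and without it neither the Boolean axioms nor, later, the compatibility of substitution in $\mathbb{T}$ with the join formula of $\mathbb{T}_{\BJ}$ goes through. (The paper then shortcuts the remaining axiom-checking via Dicker's presentation of Boolean algebras, but that is a convenience; the commutativity lemma is the essential content.)

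The second and more serious gap is your plan for the reconstruction lemma: ``peel off one coordinate at a time, finite first, then a $\J$-closure/limit argument in the infinitary case paralleling \cref{prop:6} and \cref{lem:1}''. Those two results are about $\BJ$-sets and presuppose the Grothendieck Boolean algebra you are in the middle of constructing, so invoking them here is circular, and it is unclear what the limit over infinitely many coordinates would be internal to $\mathbb{T}$. The paper needs no induction: it shows directly, uniformly in the arity, that $h(x)$ is the \emph{unique} $z$ with $h^{(i)}(z, x_i) = x_i$ for all $i$ (display~\eqref{eq:7}), both existence and unicity being single substitution computations exploiting hyperaffineness of $h$. Injectivity of $t \mapsto \lambda i.\, t^{(i)}$, pairwise disjointness of the $t^{(i)}$, and the identity $t^{(U)} = \bigvee_{i \in U} t^{(i)}$ (needed to see that $\J$ is a topology and that substitution matches) all fall out of this one characterisation. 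If you replace your induction by that uniform argument and add the commutativity lemma, the rest of your outline does match the paper's proof.
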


For the ``if'' direction, we need only show that each
$\mathbb{T}_{\BJ}$ is hyperaffine, and that each $\mathbb{T}_B$ is
finitary. For the first claim, since $T_{\BJ}(1)$ is a singleton,
every operation of $\mathbb{T}_{\BJ}$ must be affine. To see that each
$\omega \in T_{\BJ}(I)$ is hyperaffine, we observe that
$\alpha = \omega(\lambda i.\, \omega(\lambda j.\, x_{ij}))$ and
$\beta = \omega(\lambda i.\, x_{ii})$ correspond to the elements of
$T_B(I \times I)$ given by, respectively,
\begin{equation*}
  \alpha(i,j) = \omega(i) \wedge \omega(j) \qquad \text{and} \qquad \beta(i,j) =
  \begin{cases}
    \omega(i) & \text{if $i = j$;}\\
    0 & \text{otherwise.}
  \end{cases}
\end{equation*}
and these are equal since $\omega$ is an injection of
$\mathrm{supp}(\omega)$ onto a partition of $B$. To show finitariness
of each $\mathbb{T}_B$, note that any $\omega \colon I \rightarrow B$
in $T_B(I)$ has \emph{finite} support $i_1, \dots, i_n$; so we have
the element $\gamma \in T(n)$ given by $\gamma(k) = \omega(i_k)$ and
see easily that $\omega = \gamma(\pi_{i_1}, \dots, \pi_{i_n})$. So
$\mathbb{T}_B$ is finitary.

The ``only if'' direction of~\cref{prop:9} is harder, and we will
attack it in stages. We begin by establishing an important property of
hyperaffine theories:
\begin{Lemma}
  \label{lem:11}
  If $\mathbb{T}$ is a hyperaffine algebraic theory, then every pair
  of operations $t \in T(I)$ and $u \in T(J)$ commute, i.e., we have
  $t(\lambda i.\, u(\lambda j.\, x_{ij})) = u(\lambda j.\, t(\lambda
  i.\, x_{ij}))$.
\end{Lemma}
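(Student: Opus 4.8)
The plan is to imitate the proof of the medial law for a \emph{single} hyperaffine operation. When $t = u \in T(I)$, the two sides are direct instances of the hyperaffine axiom: $t(\lambda i.\, t(\lambda j.\, x_{ij}))$ collapses to the diagonal $t(\lambda i.\, x_{ii})$, while $t(\lambda j.\, t(\lambda i.\, x_{ij}))$ collapses to the same diagonal after renaming the bound indices, so the two orders of application agree. For distinct operations this ``collapse to a common form'' cannot be applied directly, since the hyperaffine axiom only relates an operation to itself; the real work is to make $t$ and $u$ interact.

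The key tool I would use is the hyperaffine axiom read as an \emph{expansion with free off\-/diagonal entries}: for every family $(z_{ii'})$ one has $t(\lambda i.\, t(\lambda i'.\, z_{ii'})) = t(\lambda i.\, z_{ii})$, so a single application $t(\lambda i.\, P_i)$ may be rewritten as a double application in which the off\-/diagonal slots $z_{ii'}$ (for $i \neq i'$) are \emph{arbitrary}. For binary $t$ this says $t(P,Q) = t(t(P,R),t(S,Q))$ for all $R,S$. Applying this freedom to the outermost $t$ in $t(\lambda i.\, u(\lambda j.\, x_{ij}))$, with the off\-/diagonal entries chosen from the available $u$\-/terms, I would split the computation over the $u$\-/index and so reduce the general statement to the two \emph{shared\-/coordinate} cases, in which the inner $u$\-/applications share an argument. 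Concretely, in the binary case this rewrites $t(u(a,b),u(c,d))$ as $t(t(u(a,b),u(c,b)),\,t(u(a,d),u(c,d)))$, after which it suffices to know $t(u(a,b),u(c,b)) = u(t(a,c),b)$ and $t(u(P,b),u(P,d)) = u(P,t(b,d))$; two applications of the former and one of the latter then assemble exactly $u(t(a,c),t(b,d))$.

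The main obstacle is precisely these shared\-/coordinate identities, and with them the passage from ``$t$ outermost'' to ``$u$ outermost''. Every rewrite furnished by the affine and hyperaffine axioms preserves the outermost operation \emph{unless} that operation is trivialised, via affineness, as an application to a constant family. I therefore expect the crux to be an affine collapse that makes a spectator copy of $t$ (or of $u$) disappear and thereby exposes the other operation on the outside, with the hyperaffine axioms of $t$ and of $u$ arranged so that the values before and after agree. I would settle this first for finitary, binary\-/generated operations as above, and then re\-/run the same expansion\-/and\-/collapse scheme with general index sets $I$ and $J$ in place of $\{1,2\}$ to obtain the full, possibly infinitary, statement.
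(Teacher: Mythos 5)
Your reduction is sound as far as it goes: the double expansion $t(P,Q) = t(t(P,R),t(S,Q))$ is a correct reading of the hyperaffine axiom for $t$, and granting the two shared\-/coordinate identities $t(u(a,b),u(c,b)) = u(t(a,c),b)$ and $t(u(P,b),u(P,d)) = u(P,t(b,d))$, your assembly of $u(t(a,c),t(b,d))$ is correct. But those two identities are where the entire content of the lemma lives --- each is just the commutativity statement with a repeated variable --- and you leave them unproved, saying only that you ``expect the crux to be an affine collapse'' arranged using ``the hyperaffine axioms of $t$ and of $u$''. That is a genuine gap, and moreover the toolkit you name cannot close it: the hyperaffine and affine identities for $t$ alone and for $u$ alone never mix the two operations, so no sequence of such rewrites will move $u$ from inside a $t$ to outside it. You have reduced the problem to special cases of itself without a base case.

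The missing idea is that the hypothesis is not merely that $t$ and $u$ are hyperaffine but that \emph{every} operation of $\mathbb{T}$ is --- in particular the composite $v \in T(I \times J)$ given by $v(x) = t(\lambda i.\, u(\lambda j.\, x_{ij}))$. Hyperaffineness of $v$ reads
\begin{equation*}
    t(\lambda i.\, u(\lambda j.\, t(\lambda k.\, u(\lambda \ell.\, x_{ijk\ell})))) = t(\lambda i.\, u(\lambda j.\, x_{ijij}))\rlap{ ,}
\end{equation*}
and specialising $x_{ijk\ell} = y_{jk}$ (so that the innermost $u$ and the outermost $t$ are each applied to constant families and vanish by affineness) yields $u(\lambda j.\, t(\lambda k.\, y_{jk})) = t(\lambda i.\, u(\lambda j.\, y_{ji}))$, which is the full commutativity in one step --- no case split, no finitary\-/to\-/infinitary bootstrapping, and no separate treatment of the shared\-/coordinate cases. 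This single application of the axiom to the composite is exactly the ``affine collapse that exposes the other operation on the outside'' you were looking for; without invoking the composite, your plan does not go through.
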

\begin{proof}
  The operation $t(\lambda i.\, u(\lambda j.\, x_{ij}))$ is
  hyperaffine, which says that
  \begin{equation*}
    t(\lambda i.\, u(\lambda j.\, x_{ijij})) = 
    t(\lambda i.\, u(\lambda j.\, t(\lambda k.\, u(\lambda \ell.\, x_{ijk\ell}))))\rlap{ .}
  \end{equation*}
  Now taking $x_{ijk\ell} = y_{jk}$ gives the desired result:
  \begin{equation*}
    t(\lambda i.\, u(\lambda j.\, y_{ji})) = 
    t(\lambda i.\, u(\lambda j.\, t(\lambda k.\, u(\lambda \ell.\, y_{jk}))))
    = 
    u(\lambda j.\, t(\lambda k.\, y_{jk}))\rlap{ .}\qedhere
  \end{equation*}
\end{proof}
Now, we observed in Remark~\ref{rk:1} that, in the theory of
$\BJ$-sets, the Boolean algebra $B$ appears as the free $\BJ$-set on
two generators. This indicates how we should reconstruct a Boolean
algebra from a hyperaffine theory.
\begin{Prop}
  \label{prop:10}
  If $\mathbb{T}$ is non-degenerate hyperaffine, then $T(2)$ underlies a
  non-degenerate Boolean algebra with $1 = \pi_1$, $0 = \pi_2$, and
  $\wedge$, $\vee$ and $(\thg)'$  determined~by
  \begin{equation*}
    (b \wedge c)(x,y) = b(c(x,y),y) \quad (b \vee c)(x,y) = b(x, c(x, y)) \quad b'(x,y) = b(y,x)\rlap{ .}
  \end{equation*}
\end{Prop}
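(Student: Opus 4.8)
The plan is to verify directly that the operations given in the statement satisfy the Boolean algebra axioms, leaning on two facts about the binary operations $b \in T(2)$. First, affineness gives $b(x,x) = x$, and specialising the hyperaffine law $b(b(x_{11},x_{12}), b(x_{21},x_{22})) = b(x_{11}, x_{22})$ (taking $x_{21} = x_{22}$, respectively $x_{11} = x_{12}$) yields the two \emph{decomposition} identities $b(b(x,y),z) = b(x,z)$ and $b(x, b(y,z)) = b(x,z)$. Second, \cref{lem:11} specialises for binary $b,c$ to the \emph{commutation} identity $b(c(x,y), c(z,w)) = c(b(x,z), b(y,w))$. Essentially every verification below is an instance of feeding one of these two identities a suitable substitution.

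The routine axioms come first. That $\pi_1$ and $\pi_2$ are units for $\wedge$ and $\vee$ is immediate from $\pi_1(x,y) = x$ and $\pi_2(x,y) = y$; involutivity of complementation is $(b')'(x,y) = b'(y,x) = b(x,y)$. For the complement laws, $(b \wedge b')(x,y) = b(b(y,x), y) = b(y,y) = y$ and $(b \vee b')(x,y) = b(x, b(y,x)) = b(x,x) = x$, each using one decomposition identity; idempotence and the two absorption laws (e.g.\ $(b \wedge (b \vee c))(x,y) = b(b(x,c(x,y)),y) = b(x,y)$) follow the same way. Associativity of $\wedge$ and of $\vee$ falls out on unfolding both sides to a common expression (both $(b \wedge c) \wedge d$ and $b \wedge (c \wedge d)$ evaluate at $(x,y)$ to $b(c(d(x,y),y),y)$). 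Commutativity of $\wedge$ is the one place among these where I would invoke commutation: writing $y = c(y,y)$ gives $b(c(x,y), y) = b(c(x,y), c(y,y)) = c(b(x,y), y)$, and dually for $\vee$.

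The real content --- and the step I expect to be the main obstacle --- is distributivity, since here neither side unfolds to the other by inspection. I would prove $b \vee (c \wedge d) = (b \vee c) \wedge (b \vee d)$. The left side evaluates to $b(x, c(d(x,y), y))$. Expanding the right side and collapsing the outer $b$ by the decomposition identity $b(b(p,q),r) = b(p,r)$ leaves $b(x, c(b(x, d(x,y)), y))$, so it remains to reconcile the two inner arguments under $b(x, \thg)$. The key manoeuvre is to write $y = b(y,y)$ and apply commutation to the inner term: $c(b(x, d(x,y)), y) = c(b(x, d(x,y)), b(y,y)) = b(c(x,y), c(d(x,y), y))$; feeding this into $b(x, \thg)$ and collapsing via $b(x, b(u,v)) = b(x,v)$ recovers $b(x, c(d(x,y), y))$, as required. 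With distributivity in hand we have a distributive lattice with top $\pi_1$, bottom $\pi_2$, and complements, hence a Boolean algebra; and it is non-degenerate because $\pi_1 \neq \pi_2$ in $T(2)$ by non-degeneracy of $\mathbb{T}$.
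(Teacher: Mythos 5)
Your proof is correct, but it takes a different route from the paper. The paper does not verify the lattice-theoretic axioms at all: it invokes Dicker's theorem that a Boolean algebra structure on a set is equivalent to constants $1,0$ together with a single ternary operation $a,b,c \mapsto a(b,c)$ (``if $a$ then $b$ else $c$'') satisfying five identities, and then observes that taking this ternary operation to be substitution in $T(2)$ makes almost all of those identities instances of the theory axioms, with only $a(b,0)=b(a,0)$ requiring the combination of affineness and the commutation lemma (\cref{lem:11}). Your argument instead checks the classical axioms (units, complements, idempotence, absorption, associativity, commutativity, one distributive law) directly from the two decomposition identities $b(b(x,y),z)=b(x,z)$, $b(x,b(y,z))=b(x,z)$ and the binary instance of \cref{lem:11}; all of your individual computations are correct, including the distributivity step, which is the only genuinely non-mechanical one, and one distributive law does suffice in a bounded complemented lattice. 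What the paper's approach buys is brevity and the conceptual point that the Boolean structure \emph{is} the substitution structure of $T(2)$; what yours buys is self-containedness (no appeal to Dicker's axiomatisation) at the cost of a longer, though entirely routine, verification. One presentational remark: you should make explicit at the outset that every element of $T(2)$ is hyperaffine because $\mathbb{T}$ is a hyperaffine theory, since both decomposition identities and the commutation lemma rest on this.
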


\begin{proof}
  According to~\cite[Theorem~1]{Dicker1963Set}, to give Boolean
  algebra structure on $T(2)$ is to give constants $1, 0$ and a
  ternary operation $a,b,c \mapsto a(b,c)$ (thought of as encoding the
  Boolean operation ``if $a$ then $b$ else $c$'') satisfying the five
  axioms:
  \begin{equation}
    \label{eq:6}
  \begin{aligned}
    a(b,c)(d,e) &= a(b(d,e),c(d,e)) &
    0(b,c) &= c &
    1(b,c) &= b \\
    a(0,a) &= 0 &
    a(b,0) &= b(a,0)\rlap{ ;}
  \end{aligned}
  \end{equation}
  and in this presentation, the usual Boolean operations $\wedge$,
  $\vee$ and $(\thg)'$ are re-found as $b \wedge c = b(c, 0)$ and
  $b \vee c = b(b, c)$ and $b' = b(0, 1)$. In the case of $T(2)$, if
  we take $1 = \pi_1$ and $0 = \pi_2$ and $a(b,c)$ to be the
  substitution operation in $\mathbb{T}$, then all but the
  last-displayed axiom are trivial. For this last axiom, we compute
  that
  $a(b(x,y),y) = a(b(x,y), b(y,y)) = b(a(x,y), a(y,y)) = b(a(x,y),y)$
  using affineness of $b$; commutativity of $a$ and $b$ via
  Lemma~\ref{lem:11}; and affineness of $a$. So $T(2)$ is a Boolean
  algebra structure, with operations $\wedge$, $\vee$ and $(\thg)'$ as
  displayed above; it is non-degenerate by the assumption that
  $\pi_1 \neq \pi_2 \in T(2)$.
\end{proof}

We now explain how to endow the Boolean algebra of this proposition
with a zero-dimensional topology. In this we follow~\cite[\sec
2]{Johnstone1997Cartesian} by first introducing \emph{binary reducts}
and proving some important facts about them.
\begin{Defn}[Binary reducts]
  \label{def:12}
  Let $\mathbb{T}$ be an algebraic theory and $t \in T(I)$. For each
  subset $U \subseteq I$, we write $t^{(U)} \in T(2)$ for the binary
  operation with
  \begin{equation*}
    t^{(U)}(x,y) = t\biggl(\lambda i.\,
    \begin{cases}
      x & \text{ if $i \in U$;}\\
      y & \text{ otherwise.}
    \end{cases}
    \biggr)
  \end{equation*}
  When $U$ is a singleton $\{i\}$, we may write $t^{(i)}$ rather than
  $t^{(\{i\})}$.
\end{Defn}

\begin{Lemma}
  \label{prop:11}
  Let $\mathbb{T}$ be a non-degenerate hyperaffine algebraic theory,
  let $B = T(2)$ be the Boolean algebra of~\cref{prop:10}, and let
  $t,u \in T(I)$.
  \begin{enumerate}[(i)]
  \item If $t^{(i)} = u^{(i)}$ for all $i \in I$ then $t = u$;
  \item If $i \neq j$ then $t^{(i)} \wedge t^{(j)} = 0$ in $B$;
  \item For all $U \subseteq I$, we have $t^{(U)} = \bigvee_{i \in U}
    t^{(i)}$ in $B$.
  \end{enumerate}
\end{Lemma}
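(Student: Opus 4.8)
The plan is to derive all three statements from two identities that hold in any hyperaffine theory. The first is the \emph{expansion identity}
\begin{equation*}
  t(\lambda i.\, x_i) = t(\lambda i.\, t^{(i)}(x_i, z)),
\end{equation*}
valid for every $t \in T(I)$, every family $(x_i)_{i \in I}$ and every $z$. I would obtain it by applying hyperaffineness $t(\lambda i.\, t(\lambda j.\, x_{ij})) = t(\lambda i.\, x_{ii})$ to the substitution $x_{ij} = x_i$ if $j = i$ and $x_{ij} = z$ otherwise: the inner operation then becomes $t(\lambda j.\, x_{ij}) = t^{(i)}(x_i, z)$, while the diagonal is $x_{ii} = x_i$.

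The second ingredient is a pair of reduct formulas, $t^{(U)} \wedge t^{(V)} = t^{(U \cap V)}$ and $t^{(U)} \vee t^{(V)} = t^{(U \cup V)}$ for all $U, V \subseteq I$. Using the Boolean operations of \cref{prop:10} I would write $(t^{(U)} \wedge t^{(V)})(x,y) = t^{(U)}(t^{(V)}(x,y), y)$ as a double application of $t$ -- presenting each ``$y$'' slot as $t(\lambda l.\, y)$ via affineness -- and then collapse the inner $t$ by hyperaffineness; reading off the diagonal gives $t^{(U \cap V)}$, and the dual computation starting from $(b \vee c)(x,y) = b(x, c(x,y))$ gives $t^{(U \cup V)}$. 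Since $t^{(\emptyset)} = 0$ and $t^{(I)} = 1$ by affineness, this exhibits $U \mapsto t^{(U)}$ as a bounded lattice homomorphism $\P(I) \to B$. Statement (ii) is then the special case $t^{(i)} \wedge t^{(j)} = t^{(\{i\} \cap \{j\})} = t^{(\emptyset)} = 0$ when $i \neq j$, and the finite case of (iii) follows by induction from the join formula.

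For the infinite case of (iii) the subtlety is that $B$ carries no \emph{a priori} infinite joins, so I must verify that $t^{(U)}$ is the \emph{least} upper bound of $(t^{(i)})_{i \in U}$. That it is an upper bound is the join formula, since $t^{(i)} \vee t^{(U)} = t^{(U)}$ for $i \in U$. For minimality, suppose $t^{(i)} \leqslant b$ for all $i \in U$, i.e.\ $t^{(i)}(b'(x,y), y) = y$; applying the expansion identity to $t^{(U)}(b'(x,y), y)$ with $z = y$ rewrites it as $t(\lambda k.\, t^{(k)}(x_k, y))$, whose $k$th slot equals $y$ (by hypothesis when $k \in U$, by affineness when $k \notin U$). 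Hence $t^{(U)} \wedge b' = 0$, that is $t^{(U)} \leqslant b$, as required.

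It remains to prove (i), which I expect to be the main obstacle. Given $t^{(i)} = u^{(i)}$ for all $i$, I would apply the expansion identity to $t$ with the \emph{choice} $z = u(\lambda j.\, x_j)$, obtaining $t(\lambda i.\, x_i) = t(\lambda i.\, u^{(i)}(x_i, u(\lambda j.\, x_j)))$. The crux is the identity $u^{(i)}(x_i, u(\lambda j.\, x_j)) = u(\lambda j.\, x_j)$: writing the left-hand side as a double application of $u$ -- presenting the $k = i$ slot $x_i$ as $u(\lambda j.\, x_i)$ -- and diagonalising by hyperaffineness collapses it to $u(\lambda k.\, x_k)$. Substituting back, the inner argument no longer depends on $i$, so affineness of $t$ yields $t(\lambda i.\, x_i) = u(\lambda j.\, x_j)$, i.e.\ $t = u$. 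The delicate point throughout is the bookkeeping in these double applications of a single operation, and the care needed to handle infinite $U$ order-theoretically rather than by forming a join that may fail to exist.
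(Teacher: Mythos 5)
Your proof is correct, and it is organised rather differently from the paper's. The paper's proof turns each $T(J)$ into a $B$-set via substitution of elements of $B = T(2)$ and establishes the single characterisation~\eqref{eq:7}: $h(x)$ is the \emph{unique} element with $h(x) \equiv_{h^{(i)}} x_i$ for all $i$. All three parts then fall out of this --- uniqueness gives (i), the instance $h^{(j)}(x,y) \equiv_{h^{(i)}} y$ gives (ii), and a characterisation of when $t^{(U)} \leqslant u$ gives (iii). Your organising tools are instead the expansion identity $t(\lambda i.\, x_i) = t(\lambda i.\, t^{(i)}(x_i,z))$ together with the observation that $U \mapsto t^{(U)}$ is a bounded lattice homomorphism $\P(I) \rightarrow B$; the underlying computational engine (set up a matrix $w_{ij}$, apply hyperaffineness, read off the diagonal) is the same in both arguments, but the packaging is genuinely different. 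Your intermediate result is in one respect stronger --- $t^{(U)} \wedge t^{(V)} = t^{(U \cap V)}$ for arbitrary $U,V$, not merely disjointness of the atoms --- and your treatment of the infinite case of (iii) as a least-upper-bound verification (rather than presuming the join exists) is exactly the right level of care; your proof of (i) is a direct computation where the paper's is a uniqueness argument. What the paper's formulation buys is the $\equiv_b$-perspective on hyperaffine theories, which is the viewpoint that drives the rest of Section 4; what yours buys is a more self-contained and purely equational derivation. All the individual steps check out: the expansion identity is a correct substitution instance of hyperaffineness, the $\wedge$ and $\vee$ computations correctly present the constant slots as $t(\lambda j.\, y)$ via affineness before diagonalising, the equivalence $t^{(i)} \leqslant b \iff t^{(i)}(b'(x,y),y) = y$ is the correct unfolding of $t^{(i)} \wedge b' = 0$ under \cref{prop:10}, and in (i) the diagonal $w_{kk}$ of your matrix is indeed $x_k$ for every $k$, so the collapse to $u(\lambda k.\, x_k)$ is right.
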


\begin{proof}
  The elements of $B = T(2)$ easily satisfy the axioms of~\eqref{eq:2}
  in $\mathbb{T}$ and so via their action by substitution endow each
  $T(J)$ with a $B$-set structure. We claim that, with respect to this
  structure, we have for each $h \in T(I)$ that:
  \begin{equation}\label{eq:7}
    h(x) \text{ is unique such that } 
    h(x) \equiv_{h^{(i)}} x_i \text{ for all $i \in I$.}
  \end{equation}
  To see that $h$ satisfies the displayed condition, fix $i \in I$ and
  define $w_{jk} = x_k$ if $j=i$ and $w_{jk} = x_i$ otherwise; then we
  have
  $h^{(i)}(h(x), x_i) = h^{(i)}(h(\lambda k.\, x_k), h(\lambda k.\,
  x_i)) = h(\lambda j.\, h(\lambda k.\, w_{jk})) = h(\lambda j.\,
  w_{jj}) = x_i$ as required. To show the unicity, suppose
  $h^{(i)}(k(x),x_i) = x_i$ for each $i$, and set $z_{ij} = k(x)$ if
  $j = i$ and $z_{ij} = x_i$ otherwise; then
  $h(x) = h(\lambda i.\, h^{(i)}(k(x),x_i)) = h(\lambda i.\, h(\lambda
  j.\, z_{ij})) = h(\lambda i.\, z_{ii}) = k(x)$ as desired.

  We now prove (i)--(iii). For (i), if $t^{(i)} = u^{(i)}$ for each
  $i$, then $u(x) \equiv_{t^{(i)}} x_i$ for each $i$, whence
  $u(x) = t(x)$ by unicity in~\eqref{eq:7}. For (ii),~\eqref{eq:7}
  yields $h^{(j)}(x,y) \equiv_{h^{(i)}} y$ for $i \neq j$, i.e.,
  $y = h^{(i)}(h^{(j)}(x,y), y)$ which says
  $h^{(i)} \wedge h^{(j)} = 0$. Finally for (iii), observe
  that~\eqref{eq:7} implies that $h(x) = h(y)$ if and only if
  $x_i \equiv_{h^{(i)}} y_i$ for all $i \in I$. Thus, for any
  $t \in T(I)$, $U \subseteq I$ and $u \in T(2)$, we have
  $t^{(U)} \leqslant u$ in $T(2)$ just when
  $t^{(U)}(u(x,y), y) = t^{(U)}(x,y)$, just when
  $u(x,y) \equiv_{t^{(i)}} x$ for all $i \in U$. In particular,
  $t^{(U)} \leqslant u$ if and only if $t^{(i)} \leqslant u$ for all
  $i \in U$, so $t^{(U)} = \bigvee_{i \in U} t^{(i)}$ as desired.
\end{proof}

This lemma implies, in particular, that if $\mathbb{T}$ is hyperaffine
and $h \in T(I)$, then the set $P = \{h^{(i)} : i \in I\}^-$ is a
partition of $B = T(2)$. In this situation, we will say that the
operation $h$ \emph{realises} the partition $P$.

\begin{Prop}
  \label{prop:12}
  Let $\mathbb{T}$ be a non-degenerate hyperaffine theory and
  $B = T(2)$ the Boolean algebra of~\cref{prop:10}. The set $\J$ of all
  partitions realised by operations of $\mathbb{T}$ constitutes a
  zero-dimensional topology on $B$. If $\mathbb{T}$ is finitary, then
  $\J$ is necessarily the topology of finite partitions.
\end{Prop}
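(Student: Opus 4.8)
The plan is to verify the three defining conditions of a zero-dimensional topology (\cref{def:7}) for the set $\J$ of realised partitions: that it contains every finite partition, and that it is closed under the refinement operation~(i) and the pushforward operation~(ii). The computational principle behind everything is that substituting realising operations into one another yields an operation whose binary reducts are the corresponding Boolean combinations; each such claim is checked by the associativity of substitution together with the formula $b \wedge c = b(c, 0)$ from \cref{prop:10} and the identity $t^{(U)} = \bigvee_{i \in U} t^{(i)}$ from \cref{prop:11}(iii). I also use throughout that, by the remark following \cref{prop:11}, \emph{any} operation realises a partition, so to place a family in $\J$ it suffices to exhibit an operation realising it.

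Both closure conditions are obtained by constructing a new realising operation. For~(i), suppose $P \in \J$ is realised by $h \in T(I)$ with $b_i = h^{(i)}$, and for each $b \in P$ let $Q_b \in \J$ be realised by $g_b$; I would form the composite $f = h(\lambda i.\, g_{b_i})$ (with $g_{b_i}$ chosen arbitrarily when $b_i = 0$). Writing $c_{ij} = g_{b_i}^{(j)}$ and computing the reduct $f^{(i,j)}$---plugging $x$ into the single coordinate $(i,j)$, $y$ elsewhere, and collapsing the affine factors indexed by $k \neq i$---one finds $f^{(i,j)} = b_i(c_{ij}, \pi_2) = b_i \wedge c_{ij}$, so that $\{f^{(i,j)}\}^- = P(Q)$ and $f$ realises $P(Q)$. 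For~(ii), let $P \in \J$ be realised by $h$, identify $P$ with $\{h^{(i)}\}$, and let $\alpha \colon P \to I'$ be surjective, inducing a surjection $\beta \colon I \to I'$; the joins $\bigvee \alpha^{-1}(i') = h^{(\beta^{-1}(i'))}$ exist by \cref{prop:11}(iii), and the reindexed operation $h' = h(\lambda i.\, \pi_{\beta(i)})$ satisfies $(h')^{(i')} = h^{(\beta^{-1}(i'))} = \bigvee \alpha^{-1}(i')$, so $h'$ realises $\alpha_!(P)$.

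For the finite partitions, the trivial partition $\{1\}$ is realised by $\pi_1 \in T(1)$, and each two-block partition $\{b, b'\}$ by $b \in T(2)$ (since $b^{(1)} = b$ and $b^{(2)} = b'$). An arbitrary finite partition $\{b_1, \dots, b_n\}$ with $n \geq 2$ then follows by induction on $n$, applying closure~(i) to $P = \{b_1, b_1'\}$ with $Q_{b_1} = \{1\}$ and $Q_{b_1'} = \{b_1 \vee b_2, b_3, \dots, b_n\}$ (an $(n-1)$-block partition, in $\J$ by induction), since $P(Q) = \{b_1, \dots, b_n\}$. This establishes that $\J$ is a zero-dimensional topology. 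Finally, if $\mathbb{T}$ is finitary then every $h \in T(I)$ has the form $h(x) = u(x_{i_1}, \dots, x_{i_n})$ for some $u \in T(n)$, whence $h^{(i_k)} = u^{(k)}$ and $h^{(i)} = 0$ for $i \notin \{i_1, \dots, i_n\}$; so $\{h^{(i)}\}^- = \{u^{(k)}\}^-$ is finite, and $\J$ is precisely the topology of finite partitions.

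I expect the main obstacle to be purely the bookkeeping in the two reduct computations of the second paragraph---keeping track of which coordinate receives $x$, collapsing the affine factors, and matching the outcome against $\wedge$ and against the joins $t^{(U)}$---together with minor care over vanishing reducts and the zeros removed by the $(\thg)^-$ notation; there is no real conceptual difficulty once \cref{prop:11} is in hand.
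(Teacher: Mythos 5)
Your proof is correct and follows essentially the same route as the paper: the same substitution constructions realise $P(Q)$ and $\alpha_!(P)$, the same reduct computations via \cref{prop:10} and \cref{prop:11} identify them, and the finite-partition induction and the finitary clause are the same. The only organisational difference is that the paper first extracts a ``canonical realiser'' $h \in T(P)$ with $h^{(b)} = b$ for each $P \in \J$, which tidies the bookkeeping over zero reducts and arbitrary choices that you instead handle inline.
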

\begin{proof}
  We first show that any $P \in \J$ has a \emph{canonical realisation}
  by a (necessarily unique) $h \in T(P)$ with $h^{(b)} = b$ for all
  $b \in P$. To this end, let $k \in T(I)$ be \emph{any} realiser for
  $P$, pick an arbitrary element $b \in P$, and define a function
  $f \colon I \rightarrow P$ by taking $f(i) = k^{(i)}$ if
  $k^{(i)} \neq 0$ and $f(i) = b$ otherwise; we now easily see that
  $h(x) = k(\lambda i.\, x_{f(i)})$ is the desired canonical
  realisation for $P$.

  We now verify the axioms for a zero-dimensional topology. First
  observe that the trivial partition $\{1\}$ is (canonically) realised
  by $\pi_1 \in T(1)$; and that if all $(n-1)$-fold partitions are
  realised, then so is every $n$-fold partition $\{b_1, \dots, b_n\}$:
  for indeed, if $h \in T(n-1)$ realises
  $\{b_1 \vee b_2, \dots, b_n\}$, then
  $k(x_1, \dots, x_n) = b_1(x_1, h(x_2, \dots, x_{n}))$ is easily seen
  to realise $\{b_1, \dots, b_n\}$. So all finite partitions are in
  $\J$.

  Now for (i), suppose given $P \in \J$ and $Q_b \in \J$ for each
  $b \in B$. Let $h \in T(P)$ and $k_b \in T(Q_b)$ be their canonical
  realisers, and consider the term $\ell \in T(\sum_b Q_b)$ with
  $\ell(x) = h(\lambda b.\, k_b(\lambda c.\, x_{bc}))\rlap{ .}$ Easily
  we have
  $\ell^{(b,c)}(x,y) = h^{(b)}(k_b^{(c)}(x,y),y) = b(c(x,y),y) =
  (b\wedge c)(x,y)$ so that $\ell$ realises the partition $P(Q)$ as
  desired. Finally for (ii), let $P \in \J$ with canonical realiser
  $h \in T(P)$, and let $\alpha \colon P \rightarrow I$ be a
  surjection. Let $k(x) = h(\lambda i.\, x_{\alpha(i)})$ in $T(I)$;
  then by~\cref{prop:11}(iii), we have
  $k^{(i)} = h^{(\alpha^{-1}(i))} = \bigvee_{b \in
    \alpha^{-1}(i)}h^{(b)} = \bigvee \alpha^{-1}(i)$, so that $k$
  realises the partition $\alpha_!(P)$.

  Finally, if $\mathbb{T}$ is finitary, then we can write any $h \in
  T(I)$ as $k(x_{i_1}, \dots, x_{i_k})$ for some finite list $i_1,
  \dots, i_k \in I$. It follows that $h^{(i)} = 0$ unless $i \in
  \{i_1, \dots, i_k\}$, so that partitions which $\mathbb{T}$
  realises are precisely the finite ones.
\end{proof}

The following result now completes the proof of \cref{prop:9}.

\begin{Prop}
  \label{prop:13}
  Let $\mathbb{T}$ be a non-degenerate hyperaffine theory, and $\BJ$
  the non-degenerate Grothendieck Boolean algebra of
  \cref{prop:10,prop:12}. The maps
  \begin{equation}\label{eq:8}
    \omega_{(\thg)} \colon T(I) \rightarrow T_{\BJ}(I) \qquad t \mapsto \lambda i.\, t^{(i)}
  \end{equation}
  are the components of an isomorphism of algebraic theories
  $\mathbb{T} \cong \mathbb{T}_{\BJ}$. In particular, if $\mathbb{T}$ is finitary,
  then we have an isomorphism $\mathbb{T} \cong \mathbb{T}_B$.
\end{Prop}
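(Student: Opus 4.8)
The plan is to show that for each set $I$ the map $\omega_{(\thg)}\colon T(I) \to T_{\BJ}(I)$ is a well-defined bijection, and that these maps collectively preserve projections and substitution. Since the componentwise inverse of a bijective homomorphism in $\AT$ is again a homomorphism, this will produce the desired isomorphism $\mathbb{T} \cong \mathbb{T}_{\BJ}$.

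First I would check well-definedness: given $t \in T(I)$, the function $\lambda i.\, t^{(i)}$ must be a $\BJ$-valued distribution, i.e.\ its support must inject onto a partition in $\J$. By \cref{prop:11}(ii) the values $t^{(i)}$ are pairwise disjoint, so distinct indices with nonzero reduct give distinct nonzero elements and the support injects; and by \cref{prop:11}(iii) together with affineness of $t$ we have $\bigvee_i t^{(i)} = t^{(I)} = t(\lambda i.\, x) = \pi_1 = 1$, so the image is exactly the partition $\{t^{(i)} : i \in I\}^-$ realised by $t$, which lies in $\J$ by definition. Injectivity of $\omega_{(\thg)}$ is then immediate from \cref{prop:11}(i). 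For surjectivity, given $\omega \in T_{\BJ}(I)$ with support partition $P = (\im\omega)^- \in \J$, I would take the canonical realiser $h \in T(P)$ of \cref{prop:12} (so $h^{(b)} = b$) and reindex along the inverse bijection $\sigma \colon P \to \mathrm{supp}(\omega)$ of $\omega$, setting $t = h(\lambda b.\, \pi_{\sigma(b)})$; a short computation with reducts gives $t^{(i)} = h^{(\sigma^{-1}(i))}$, which equals $\omega(i)$ when $i \in \mathrm{supp}(\omega)$ and equals $h^{(\emptyset)} = 0 = \omega(i)$ otherwise, so $\omega_t = \omega$.

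Preservation of projections is a direct calculation: $(\pi_i)^{(j)}$ is $\pi_1 = 1$ when $j = i$ and $\pi_2 = 0$ otherwise, matching the projection of $\mathbb{T}_{\BJ}$. The substantive point is preservation of substitution, i.e.\ $(t(u))^{(j)} = \bigvee_i t^{(i)} \wedge (u_i)^{(j)}$ in $B = T(2)$ for all $j \in J$. The substitution axiom rewrites the left side as $t(\lambda i.\, (u_i)^{(j)})$, so writing $b_i = (u_i)^{(j)} \in T(2)$ the claim reduces to the general identity $t(\lambda i.\, b_i) = \bigvee_i t^{(i)} \wedge b_i$. Here I would invoke the uniqueness in~\eqref{eq:7}: $t(\lambda i.\, b_i)$ is the unique $s$ in the $B$-set $T(2)$ with $s \equiv_{t^{(i)}} b_i$ for all $i$, where under the substitution $B$-set structure on $T(2)$, which by \cref{prop:10} is the conditioned disjunction $b(c,d) = (b \wedge c)\vee(b' \wedge d)$, the relation $c \equiv_b d$ means precisely $b \wedge c = b \wedge d$. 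Thus it suffices to show the join $r := \bigvee_i t^{(i)} \wedge b_i$ satisfies $t^{(i)} \wedge r = t^{(i)} \wedge b_i$.

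The main obstacle is exactly this last equality, since it concerns an infinite join and general Boolean algebras are not infinitely distributive; I would handle it without appeal to any frame structure. The join $r$ exists (it is the join appearing in \cref{def:10}), and refining $P$ by splitting each $t^{(i)}$ into the disjoint pieces $t^{(i)} \wedge b_i$ and $t^{(i)} \wedge b_i'$ yields a partition in $\J$ by axiom~(i) of a zero-dimensional topology; pushing this forward along the map recording which of the two halves a piece is gives, by axiom~(ii), that $r$ and $r' := \bigvee_i t^{(i)} \wedge b_i'$ exist with $\{r, r'\}$ a partition of $1$, so in particular $r \wedge r' = 0$. Now for fixed $i$, finite distributivity in $B$ gives $t^{(i)} \wedge r = (t^{(i)} \wedge b_i \wedge r) \vee (t^{(i)} \wedge b_i' \wedge r)$; the first disjunct is $t^{(i)} \wedge b_i$ since $t^{(i)} \wedge b_i \leq r$, while the second is below $r' \wedge r = 0$, so $t^{(i)} \wedge r = t^{(i)} \wedge b_i$ as required (for $i$ outside the support both sides vanish). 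Uniqueness in~\eqref{eq:7} then forces $r = t(\lambda i.\, b_i)$, completing the proof that $\omega_{(\thg)}$ is a theory homomorphism and hence an isomorphism. Finally, when $\mathbb{T}$ is finitary, \cref{prop:12} identifies $\J$ with the topology of finite partitions, so $\mathbb{T}_{\BJ} = \mathbb{T}_B$ and the isomorphism specialises to $\mathbb{T} \cong \mathbb{T}_B$.
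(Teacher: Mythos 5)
Your proposal is correct, and its overall skeleton (well-definedness via \cref{prop:11}, injectivity via \cref{prop:11}(i), surjectivity via the canonical realiser, direct check of projections) matches the paper's. The one substantive step—preservation of substitution—is where you diverge. The paper introduces the auxiliary term $v = t(\lambda i.\, u_i(\lambda j.\, x_{ij}))$, computes $v^{(i,j)} = t^{(i)} \wedge u_i^{(j)}$, and then gets $t(u)^{(j)} = v^{(I \times \{j\})} = \bigvee_i t^{(i)} \wedge u_i^{(j)}$ in one stroke from \cref{prop:11}(iii), which already packages both the existence of the join and its identification with a binary reduct. You instead reduce to the identity $t(\lambda i.\, b_i) = \bigvee_i t^{(i)} \wedge b_i$ in $T(2)$ for arbitrary $b \in T(2)^I$ (a genuine generalisation of \cref{prop:11}(iii), which is the special case $b_i \in \{0,1\}$), and prove it via the uniqueness clause of~\eqref{eq:7} together with an explicit refine-and-pushforward argument in the zero-dimensional topology to produce the join and establish $t^{(i)} \wedge r = t^{(i)} \wedge b_i$ without infinite distributivity. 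Both routes are sound; the paper's is shorter because it reuses the lemma already in hand, while yours is more self-contained at this point, makes explicit exactly where the topology axioms are consumed (the same place \cref{def:10} silently consumes them to define composition in $\mathbb{T}_{\BJ}$), and isolates a reusable identity describing substitution of arbitrary binary operations into a hyperaffine term. The minor glosses (surjectivity of the pushforward map $\alpha$ when one of the two joins is empty, and the identification of the substitution $B$-set structure on $T(2)$ with conditioned disjunction) are easily patched and do not affect correctness.
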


\begin{proof}
  By \cref{prop:11} and definition of $\J$, each
  $\omega_t \colon I \rightarrow B$ is injective from its support onto
  a partition in $\J$, so that the maps in~\eqref{eq:8} are
  well-defined.
  
  For any $i \in I$ it is clear that
  $\omega_{\pi_i} = \pi_i \in T_{\BJ}(I)$. As for preservation of
  composition, let $t \in T(I)$ and $u \in T(J)^I$; we must show
  $t(u)^{(j)} = \bigvee_{i \in I} t^{(i)} \wedge {u_i}^{(j)}$ for each
  $j \in J$. To this end, note that the term
  $v = t(\lambda i.\, u_i(\lambda j.\, x_{ij}))$ satisfies
  \begin{equation*}
    v^{(i,j)}(x,y) = t^{(i)}(u^{(j)}(x,y),y) = (t^{(i)} \wedge {u_i}^{(j)})(x,y)\rlap{ ;}
  \end{equation*}
  whence, by~\cref{prop:11}(iii),
  $t(u)^{(j)} =
  v^{(I \times \{j\})} = \bigvee_{i \in I} \omega(i) \wedge
  \gamma_i(j)$ as desired.

  So we have a theory morphism
  $\omega_{(\thg)} \colon \mathbb{T} \rightarrow \mathbb{T}_{\BJ}$,
  whose components are injective by~\cref{prop:11}(i). It remains to
  show they are also surjective. To this end, let
  $\omega \colon I \rightarrow B$ be a distribution. By assumption,
  $\res \omega {\mathrm{supp}(\omega)}$ is an injection onto some
  $P \in \J$. So let $\iota \colon P \rightarrow I$ be the injective
  function sending $\omega(i)$ to $i$, let $h \in T(P)$ be the
  canonical realiser of $P$, and define $t \in T(I)$ to be
  $t(x) = h(\lambda b.\, \smash{x_{\iota(b)}})$. It is now clear that
  $t^{(i)} = h^{(\omega(i))} = \omega(i)$ for all $i \in
  \mathrm{supp}(\omega)$, and that $t^{(i)} = 0$ for all $i \notin
  \mathrm{supp}(\omega)$. Thus $\omega_t = \omega$ as desired.
\end{proof}

We now make the correspondences between non-degenerate Grothendieck
Boolean algebras, varieties of $\BJ$-sets, and non-degenerate
hyperaffine algebraic theories---and their finitary variants---into
functorial equivalences. Let us write:
\begin{itemize}[itemsep=0.25\baselineskip]
\item $\BA$ (resp., $\GBA$) for the category of non-degenerate
  Boolean (resp., Grothendieck Boolean)
  algebras and their homomorphisms.
\item $\HA$ (resp., $\HA^\omega$) for the full subcategory of $\AT$ on
  the non-degenerate hyperaffine (resp., finitary hyperaffine)
  algebraic theories.
\item $B\text-\Var$ (resp.,~$\BJ\text-\Var$) for the full
  subcategory of $\Var$ on the varieties isomorphic to some
  $B\text-\cat{Set}$ (resp.~$\BJ\text-\cat{Set}$).
\end{itemize}

The assignments $\BJ \mapsto \BJ\text-\cat{Set}$ and
$\BJ \mapsto \mathbb{T}_{\BJ}$ can now be made functorial. A
homomorphism of non-degenerate Grothendieck Boolean algebras
$f \colon \BJ \rightarrow B'_{\J'}$ induces, on the one hand, a
concrete functor
$f^\ast \colon B'_{\J'}\text-\cat{Set} \rightarrow
\BJ\text-\cat{Set}$, where $f^\ast$ assigns to a $B'_{\J'}$-set
structure on $X$ the $\BJ$-set structure with
$b,x,y \mapsto (fb)(x,y)$; and on the other hand, a theory
homomorphism
$\mathbb{T}_f \colon \mathbb{T}_{\BJ} \rightarrow
\mathbb{T}_{B'_{\J'}}$ with components
\begin{equation}\label{eq:9}
  (\mathbb{T}_f)_I \colon T_{\BJ}(I) \rightarrow T_{B'_{\J'}}(I) \qquad \omega \mapsto f \circ \omega\rlap{ .}
\end{equation}
In this way, we obtain functors $\mathbb{T}_{(\thg)}$ and
$(\thg)\text-\cat{Set}$ as in the statement of:

\begin{Thm}
  \label{thm:1}
  We have a triangle of equivalences, commuting to within natural
  isomorphism, as to the left in:
  \begin{equation}\label{eq:10}
    \cd[@!C@C-2.7em]{
      \GBA \ar[dr]_-{(\thg)\text-\cat{Set}} \ar[rr]^-{\mathbb{T}_{(\thg)}} & & 
      \HA \ar[dl]^-{(\thg)\text-\cat{Mod}} \\
      & (\BJ\text-\cat{Var})^\mathrm{op}
    } \qquad
    \cd[@!C@C-2.7em]{
      \BA \ar[dr]_-{(\thg)\text-\cat{Set}} \ar[rr]^-{\mathbb{T}_{(\thg)}} & & 
      \HA^\omega \ar[dl]^-{(\thg)\text-\cat{Mod}} \\
      & (B\text-\cat{Var})^\mathrm{op}
    }
  \end{equation}
  which restricts back to a triangle of equivalences as to the right.
\end{Thm}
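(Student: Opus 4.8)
The plan is to exploit the triangular shape: if the triangle commutes up to natural isomorphism and two of its edges are equivalences, the third is automatically one. I would take the two ``cheap'' edges to be the right-hand functor $(\thg)\text-\cat{Mod}\colon \HA \to (\BJ\text-\Var)^{\mathrm{op}}$ and the top functor $\mathbb{T}_{(\thg)}\colon \GBA \to \HA$. For the first of these, recall that $(\thg)\text-\cat{Mod}\colon \AT^{\mathrm{op}} \to \Var$ is already an equivalence by functorial semantics. Restricting it along the full inclusion $\HA^{\mathrm{op}} \hookrightarrow \AT^{\mathrm{op}}$ keeps it fully faithful, and \cref{prop:8,prop:9} show that this restriction lands in, and is essentially surjective onto, the full subcategory $\BJ\text-\Var$. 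Hence $(\thg)\text-\cat{Mod}\colon \HA \to (\BJ\text-\Var)^{\mathrm{op}}$ is an equivalence with essentially no further work.

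The substance lies in showing $\mathbb{T}_{(\thg)}\colon \GBA \to \HA$ is an equivalence. Essential surjectivity is exactly \cref{prop:9}. Faithfulness is easy: $\mathbb{T}_f$ recovers $f$ on evaluating at the distributions $\omega_b \in T_{\BJ}(2)$, since $\mathbb{T}_f(\omega_b) = f \circ \omega_b = \omega_{f(b)}$. Fullness is the main obstacle. Starting from an arbitrary homomorphism $\phi\colon \mathbb{T}_{\BJ} \to \mathbb{T}_{C_\K}$, I would manufacture a candidate $f$ as follows: under the canonical identifications $T_{\BJ}(2)\cong B$ and $T_{C_\K}(2)\cong C$ of \cref{prop:10}, the component $\phi_2$ is a map $f\colon B\to C$; since $\phi$ fixes the constants $\pi_1,\pi_2$ and commutes with substitution, it preserves the constants and the ternary operation presenting the Boolean structure in \cref{prop:10}, so $f$ is a Boolean homomorphism.

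The crux is then to verify $\phi = \mathbb{T}_f$ and that $f$ respects the topologies. The key observation is that every theory homomorphism preserves binary reducts, $\phi_2(t^{(i)}) = \phi_I(t)^{(i)}$, because $t^{(i)}$ is obtained from $t$ by substituting projections; and that in a theory of $\BJ$-sets a distribution $\eta$ satisfies $\eta(i) = \eta^{(i)}$, a short computation with the substitution formula. Combining these, for any $\omega \in T_{\BJ}(I)$ one gets $\phi_I(\omega)(i) = \phi_I(\omega)^{(i)} = \phi_2(\omega^{(i)}) = \phi_2(\omega(i)) = f(\omega(i))$, whence $\phi_I(\omega) = f\circ\omega = \mathbb{T}_f(\omega)$. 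Applying this to the inclusion distribution $\omega_P = (P\hookrightarrow B)$ for $P \in \J$ shows that $f\circ\omega_P = \phi_P(\omega_P)$ is a genuine element of $T_{C_\K}(P)$, whose support injects onto the partition $f(P)^- \in \K$; so $f$ is indeed a homomorphism of Grothendieck Boolean algebras. This closes fullness.

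Finally, commutativity of the triangle is the object-level isomorphism $\mathbb{T}_{\BJ}\text-\cat{Mod} \cong \BJ\text-\cat{Set}$ of \cref{prop:8}, which I would upgrade to a natural isomorphism by the routine check that, for $f\colon \BJ \to C_\K$, restricting a $C_\K$-set along $f$ agrees with restricting the corresponding $\mathbb{T}_{C_\K}$-model along $\mathbb{T}_f$ (both are identity on underlying sets and send $b(x,y)$ to $(fb)(x,y)$). With two edges equivalences and the triangle commuting, $(\thg)\text-\cat{Set}$ is a composite of equivalences, hence an equivalence. For the right-hand diagram I would simply restrict: by \cref{prop:9,prop:12} the finitary hyperaffine theories are exactly those of the form $\mathbb{T}_B$, corresponding under $\mathbb{T}_{(\thg)}$ to the Boolean algebras carrying the finite-partition topology; and since a Boolean homomorphism automatically sends finite partitions to finite partitions, $\BA$ is isomorphic to the corresponding full subcategory of $\GBA$. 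The three functors thus restrict to the claimed equivalences $\BA \simeq \HA^\omega \simeq (B\text-\Var)^{\mathrm{op}}$.
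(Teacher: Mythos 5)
Your proposal is correct and follows essentially the same route as the paper: both obtain the bottom and right edges by restricting the functorial-semantics equivalence via Propositions~\ref{prop:8} and~\ref{prop:9}, and both reduce the substance to full faithfulness of $\mathbb{T}_{(\thg)}$, extracting $f$ from the binary component and propagating it to all arities via the reduct identity $\varphi(\omega)^{(i)} = \varphi(\omega^{(i)}) = \omega_{f(\omega(i))}$. The only cosmetic difference is that you invoke the ternary (Dicker-style) presentation of Proposition~\ref{prop:10} to see $f$ is a Boolean homomorphism where the paper checks the explicit identities $\omega_1 = \pi_1$, $\omega_{b'} = \omega_b(\pi_2,\pi_1)$, $\omega_{b\wedge c} = \omega_b(\omega_c(\pi_1,\pi_2),\pi_2)$.
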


\begin{proof}
  By \cref{prop:8,prop:9}, the equivalence
  $(\thg)\text-\cat{Mod} \colon \AT \rightarrow \Var^\mathrm{op}$
  restricts to one $\HA \rightarrow (\BJ\text-\cat{Var})^\mathrm{op}$
  and further back to one
  $\HA^\omega \rightarrow (B\text-\cat{Var})^\mathrm{op}$. Again
  because of \cref{prop:8}, the triangles commute to within
  isomorphism. So to complete the proof, it suffices to show that
  $\mathbb{T}_{(\thg)}$ is an equivalence both to the left and the
  right. We already know by \cref{prop:9} that in both cases it is
  essentially surjective, so we just need to check it is also full and
  faithful.

  As in the proof of \cref{prop:8}, given $b \in B$ we write
  $\omega_b \in T_{\BJ}(2)$ for the element with $\omega_b(1) = b$ and
  $\omega_b(2) = b'$, and given $P \in \J$ we write
  $\omega_P \in T_{\BJ}(P)$ for the element given by the inclusion
  $P \hookrightarrow B$. Note that \emph{every} element of $T_B(2)$ is
  of the form $\omega_b$ for a unique $b \in B$; so given a theory
  homomorphism $\varphi \colon \mathbb{T}_B \rightarrow \mathbb{T}_C$
  there is a unique map $f \colon B \rightarrow C$ such that
  $\varphi(\omega_b) = \omega_{f(b)}$ for each $b \in B$. Since
  $\omega_1 = \pi_1$, $\omega_{b'} = \omega_b(\pi_2, \pi_1)$ and
  $\omega_{b \wedge c} = \omega_b(\omega_c(\pi_1, \pi_2) \pi_2)$ in
  $T_B(2)$, and $\varphi$ preserves these identities, this $f$ is a
  Boolean homomorphism. Moreover, for each $\omega \in T_B(I)$ and
  $i \in I$, we have $\smash{\omega_{\omega(i)}} = \omega^{(i)} \in T_B(2)$,
  and so
  $\varphi(\omega)^{(i)} = \varphi(\omega^{(i)}) =
  \varphi(\omega_{\omega(i)}) = \omega_{f(\omega(i))}$, so that
  $\varphi(\omega) = \lambda i.\, f(\omega(i))$. In particular, $f$
  must be a homomorphism of Grothendieck Boolean algebras: for indeed,
  if $P \in \J$, then $(\im \varphi(\omega_P))^- = f(P)^- \in \K$.
  Moreover, we have that $\varphi = \mathbb{T}_f$; now if also
  $\varphi = \mathbb{T}_g$, then
  $\omega_{f(b)} = \varphi(\omega_b) = \omega_{g(b)}$ for all
  $b \in B$ and so $f = g$. So $\mathbb{T}_{(\thg)}$ is full and
  faithful as claimed, and this completes the proof.
\end{proof}

\section{Hyperaffine--unary theories}
\label{sec:gener-case:synt}

In this section, we prove our first main result, which gives a
syntactic characterisation of the algebraic theories which correspond
to cartesian closed varieties. This simplifies an existing
characterisation due to Johnstone in~\cite{Johnstone1990Collapsed}; as
such, we begin by recalling Johnstone's result, and then use it to
deduce ours.

\begin{Not}[Placed equality, dependency; \cite{Johnstone1990Collapsed}]
  Let $\mathbb{T}$ be an algebraic theory, let $q \in T(I)$ and let $i
  \in I$. Given $t,u \in
  T(J)$, we write $t
  \equiv_{q,i} u$ (read as ``$t$ and $u$ are equal in the $i$th place
  of $q$'') as an abbreviation for the assertion that
  \begin{equation*}
    q(\, x[t(y)/x_i]\,) = q(\,x[u(y)/x_i]\,)\rlap{ .}
  \end{equation*}
  We say that $q \in T(I)$ \emph{does not depend on} $i \in I$ if
  $x \equiv_{q, i} y$.
\end{Not}

\begin{Thm}[\cite{Johnstone1990Collapsed}]
  \label{thm:2}
  A non-degenerate algebraic theory $\mathbb{T}$ presents a cartesian
  closed variety if, and only if, the following two conditions hold:
  \begin{enumerate}[(i)]
  \item For every $p \in T(A)$, there exist $q \in T(B)$, families
    $u,v \in T(1)^B$, and a function $\alpha \colon B \rightarrow A$
    such that
    \begin{equation}\label{eq:11}
      q(\lambda b.\, u_b(x)) = x \quad \text{and} \quad
      u_b(p(\lambda a.\, x_a)) \equiv_{q,b} v_b(x_{\alpha(b)})  \text{ for
        all $b \in B$.}
    \end{equation}
  \item For any $q \in T(B)$, $u \in T(1)^B$ and
    $\alpha \colon B \rightarrow A$, if
    $q(\lambda b.\, u_b(x_{\alpha(b)})) \in T(A)$ does not depend on
    $i$, then $q$ does not depend on any $j \in \alpha^{-1}(i)$.
  \end{enumerate}
\end{Thm}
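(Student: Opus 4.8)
The plan is to recast cartesian closedness as a cocontinuity condition and then translate that condition into combinatorics on the clone $\mathbb{T}$. Since $\V = \mathbb{T}\text-\cat{Mod}$ is cocomplete and the free algebras $\mdl[F](X)$ form a strong generator, the (special) adjoint functor theorem tells us that $(\thg) \times \mdl[Y] \colon \V \to \V$ has a right adjoint precisely when it preserves all small colimits. So the first step is to reduce ``$\V$ is cartesian closed'' to the assertion that $(\thg) \times \mdl[Y]$ is cocontinuous for every $\mdl[Y] \in \V$.

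Next I would localise this to \emph{free} algebras. Using that products commute with reflexive coequalizers in a variety, together with the fact that every $\mdl[Y]$ is canonically a reflexive coequalizer of free algebras $\mdl[F](R) \rightrightarrows \mdl[F](S) \to \mdl[Y]$, cocontinuity of $(\thg) \times \mdl[Y]$ reduces to the interaction of products with coproducts of free algebras --- equivalently, to understanding the canonical comparison $\mdl[F](A \times C) \to \mdl[F](A) \times \mdl[F](C)$ induced by the two projections. Thus it suffices to analyse, for free algebras, exactly which elements generate the product $\mdl[F](A) \times \mdl[Y]$ and how homomorphisms out of it are pinned down.

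The heart of the argument is then a direct combinatorial study of $\mdl[F](A) \times \mdl[Y]$, writing a point of the free algebra $\mdl[F](A)$ as an operation $p \in T(A)$. I would show that the transpose of a map $\mdl[F](A) \times \mdl[Y] \to \mdl[Z]$ exists exactly when each $p$ carries the data of condition~(i): an operation $q \in T(B)$, unary families $u,v$, and a reindexing $\alpha \colon B \to A$ witnessing both that $q(\lambda b.\, u_b(x)) = x$ and that $u_b(p(\lambda a.\, x_a))$ is equal in the $b$th place of $q$ to $v_b(x_{\alpha(b)})$. The identity $q(\lambda b.\, u_b(x)) = x$ is what reassembles $p$ from its \emph{components} $u_b(p(\lambda a.\, x_a))$, so it encodes the generation of the product; while the placed equalities say that, after substitution into the generators, each component depends on the single input coordinate $\alpha(b)$, which is precisely what makes the pointwise candidate for $\mdl[Z]^{\mdl[Y]}$ into a genuine algebra and the transpose into a homomorphism.

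Finally I would isolate condition~(ii) as the uniqueness half of the construction. Even granting the decompositions of~(i), the candidate right adjoint is well defined only if the dependency recorded by $\alpha$ is \emph{forced} rather than chosen; condition~(ii) --- that whenever $q(\lambda b.\, u_b(x_{\alpha(b)}))$ does not depend on an input $i$, then $q$ does not depend on any $b \in \alpha^{-1}(i)$ --- supplies exactly the rigidity guaranteeing that the transpose is independent of the auxiliary witnesses, so that $\mdl[Z],\mdl[Y] \mapsto \mdl[Z]^{\mdl[Y]}$ is functorial and genuinely adjoint. The main obstacle, and the step demanding real care, is this two-way combinatorial translation: converting the abstract statement ``$(\thg) \times \mdl[Y]$ preserves coproducts for all $\mdl[Y]$'' into the precise existential data of~(i), and back, while keeping the quantifier alternation over $p$, over the witnesses $(q,u,v,\alpha)$, and over the ambient coordinates exactly aligned with the stated conditions.
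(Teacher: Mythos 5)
First, a point of reference: the paper does not prove this statement at all. Theorem~\ref{thm:2} is imported verbatim from \cite{Johnstone1990Collapsed} and used as a black box to deduce Theorem~\ref{thm:3}, so there is no in-paper argument to compare yours against (the paper's later, Johnstone-free route to cartesian closedness goes through Propositions~\ref{prop:47} and~\ref{prop:24} and never touches conditions (i) and (ii)). Judged on its own terms, your outline does follow the broad strategy of Johnstone's original argument: cartesian closedness of a variety is equivalent, by the adjoint functor theorem, to cocontinuity of each $(\thg) \times \mdl[Y]$; reflexive coequalizers are handled separately, so everything reduces to preservation of coproducts; and since coproducts of free algebras are free and every algebra is a reflexive coequalizer of free ones, the problem localises to products of the form $\mdl[F](A) \times \mdl[Y]$, i.e.\ to the comparison $\coprod_{a \in A}\bigl(\mdl[F](1) \times \mdl[Y]\bigr) \rightarrow \mdl[F](A) \times \mdl[Y]$. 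Note, though, that the comparison you actually name, $\mdl[F](A \times C) \rightarrow \mdl[F](A) \times \mdl[F](C)$ ``induced by the two projections'', is not the relevant map: it measures whether the free functor preserves products, which is neither true in general nor what is needed; what must be analysed is whether $(\thg) \times \mdl[Y]$ preserves coproducts.

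The genuine gap is that the entire content of the theorem --- the equivalence between this preservation property and the specific syntactic conditions (i) and (ii) --- is asserted rather than argued. The sentence ``I would show that the transpose of a map $\mdl[F](A) \times \mdl[Y] \rightarrow \mdl[Z]$ exists exactly when each $p$ carries the data of condition~(i)'' restates the theorem as a goal rather than proving it: nothing in the proposal explains why surjectivity of the relevant comparison forces, for each $p \in T(A)$, the existence of witnesses $q, u, v, \alpha$ satisfying~\eqref{eq:11}; why the well-definedness half is captured precisely by the dependency condition (ii); how the quantification over arbitrary $\mdl[Y]$ is discharged by checking only free ones; or why, conversely, (i) and (ii) suffice to equip the candidate function space with a $\mathbb{T}$-model structure for arbitrary $\mdl[Y]$ and $\mdl[Z]$. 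Your glosses on what (i) and (ii) ``encode'' are reasonable intuitions, but as written the proposal contains no step at the crucial point that could be checked or could fail: it is a plan for a proof with the one hard step left blank, and so cannot be accepted as a proof of the statement.
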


Our improved characterisation says that $\mathbb{T}$ presents a
cartesian closed variety if, and only if, each operation decomposes
uniquely into hyperaffine and unary parts.
\begin{Defn}[Hyperaffine--unary decomposition, hyperaffine--unary
  theory]
  \label{def:13}
  Let $\mathbb{T}$ be an algebraic theory. Given a hyperaffine
  $h \in T(I)$ and a unary $m \in T(1)$, we write $\dc h m$ for the
  operation $h(\lambda i.\, m) \in T(I)$. A \emph{hyperaffine--unary
    decomposition} of $t \in T(I)$ is a choice of $h$ and $m$ as above
  such that $t = \dc h m$. We say that the theory $\mathbb{T}$ is
  \emph{hyperaffine--unary} if every operation $t \in T(I)$ admits a
  unique hyperaffine--unary decomposition.
\end{Defn}

The crucial lemma which will enable us to prove this is:

\begin{Lemma}
  \label{lem:2}
  If the algebraic theory $\mathbb{T}$ satisfies condition (i) of
  \cref{thm:2}, then any affine operation of $\mathbb{T}$ is
  hyperaffine.
\end{Lemma}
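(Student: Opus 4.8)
The plan is to feed $p$ into condition~(i) of \cref{thm:2} and use affineness to extract an explicit factorisation of $p$, from which the hyperaffine identity follows by a short calculation. So fix an affine $p \in T(A)$ and let $q \in T(B)$, $u,v \in T(1)^B$ and $\alpha \colon B \to A$ be as supplied by~(i), so that $q(\lambda b.\, u_b(x)) = x$ and $u_b(p(\lambda a.\, x_a)) \equiv_{q,b} v_b(x_{\alpha(b)})$ for all $b \in B$.

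First I would specialise the second relation by setting every $x_a$ equal to a single variable $x$: since $p(\lambda a.\, x) = x$ this gives $u_b \equiv_{q,b} v_b$, and substituting $p(\lambda a.\, x_a)$ into the argument of this relation and combining by transitivity with the original relation upgrades it to
\[
  v_b(p(\lambda a.\, x_a)) \equiv_{q,b} v_b(x_{\alpha(b)}) \qquad \text{for all } b \in B\rlap{ .}
\]
Substituting $p(\lambda a.\, x_a)$ for $x$ in $q(\lambda b.\, u_b(x)) = x$ and then rewriting each argument of $q$ by $u_b(p(\lambda a.\, x_a)) \equiv_{q,b} v_b(x_{\alpha(b)})$ yields the factorisation
\[
  p(\lambda a.\, x_a) = q(\lambda b.\, v_b(x_{\alpha(b)}))\rlap{ .}
\]
With this the hyperaffine identity is a formal consequence. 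Applying the factorisation to the outer $p$ in $p(\lambda a.\, p(\lambda a'.\, x_{aa'}))$ rewrites it as $q(\lambda b.\, v_b(p(\lambda a'.\, x_{\alpha(b),a'})))$; rewriting each argument by the displayed relation (with $x_a$ replaced by $x_{\alpha(b),a}$) collapses this to $q(\lambda b.\, v_b(x_{\alpha(b),\alpha(b)}))$, which the factorisation identifies with $p(\lambda a.\, x_{aa})$. As $p$ is affine by hypothesis, it is therefore hyperaffine.

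The step I expect to be the main obstacle is passing from a slot-wise placed equality $s_b \equiv_{q,b} s'_b$ to the \emph{simultaneous} equality $q(\lambda b.\, s_b) = q(\lambda b.\, s'_b)$, which is used both for the factorisation and in the final calculation. When $\mathbb{T}$ is finitary this is harmless, since $q$ then depends on only finitely many of its arguments and the replacements may be performed one at a time; but for infinitary $\mathbb{T}$ coordinatewise placed equality does \emph{not} by itself entail the joint equality, as an operation may be insensitive to any single change of argument yet sensitive to changing all arguments at once. Justifying the simultaneous replacement is thus the delicate point, and I would expect to have to invoke the global force of condition~(i)---rather than the slot-wise relations in isolation---in order to realise it.
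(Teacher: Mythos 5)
Your strategy is the paper's: feed the affine $p$ into condition~(i) of \cref{thm:2}, use affineness to obtain $u_b \equiv_{q,b} v_b$, and then rewrite $p(\lambda a.\, p(\lambda a'.\, x_{aa'}))$ into $p(\lambda a.\, x_{aa})$ by repeated use of the right-hand relation of~\eqref{eq:11}; the individual rewrites you perform are exactly those in the paper's proof. The problem is the step you yourself flag, and it is a genuine gap rather than a removable delicacy: for an infinitary $q$, the slot-wise placed equalities $s_b \equiv_{q,b} t_b$ for all $b \in B$ do \emph{not} entail $q(\lambda b.\, s_b) = q(\lambda b.\, t_b)$. Placed equality only licenses changing one argument at a time, and an infinitary operation can be insensitive to every individual argument without being insensitive to all of them at once: a nonprincipal ultrafilter-limit operation $q$ in the (infinitary) theory of compact Hausdorff spaces satisfies $x \equiv_{q,b} y$ for every $b$, yet $q(\lambda b.\, x) = x \neq y = q(\lambda b.\, y)$. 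So your factorisation $p(\lambda a.\, x_a) = q(\lambda b.\, v_b(x_{\alpha(b)}))$, and the subsequent ``rewriting each argument'' step in your final calculation, are not consequences of what you have established, and the proposal is incomplete precisely where you suspected.

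The paper's proof is organised so as never to need this unrestricted simultaneous substitution. It does not extract a factorisation of $p$ at all; instead, for each \emph{fixed} $b$, it carries out the whole computation inside the $b$-th place of $q$, as a chain of placed equalities at that one place:
\begin{multline*}
  u_b(p(\lambda a.\, p(\lambda a'.\, x_{aa'}))) \equiv_{q,b} v_b(p(\lambda a'.\, x_{\alpha(b),a'}))
  \equiv_{q,b} u_b(p(\lambda a'.\, x_{\alpha(b),a'}))\\
  \equiv_{q,b} v_b(x_{\alpha(b)\alpha(b)})
  \equiv_{q,b} u_b(p(\lambda a.\, x_{aa}))\rlap{ ,}
\end{multline*}
each step being an instance either of the right-hand relation of~\eqref{eq:11} under substitution or of $u_b \equiv_{q,b} v_b$. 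Since $\equiv_{q,b}$ at a fixed place is literal equality of terms of the form $q(x[t(y)/x_b])$, it is an equivalence relation stable under substitution, so this chain requires nothing beyond what you already have. Information from different places is combined exactly once, at the very end, via the left-hand equation $q(\lambda b.\, u_b(x)) = x$: writing $S$ and $T$ for the two sides of the hyperaffine identity, one passes from $u_b(S) \equiv_{q,b} u_b(T)$ for all $b$ to $S = q(\lambda b.\, u_b(S)) = q(\lambda b.\, u_b(T)) = T$. This single, specifically shaped cross-place inference is the only one the paper's proof of the lemma invokes, whereas your route needs the unrestricted principle at two separate points. To repair the proposal, recast your rewrites as placed equalities at a fixed $b$ as above, and appeal to the left-hand equation only once at the end.
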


\begin{proof}
  Let $p \in T(A)$ be affine, and let $q,u,v,\alpha$ be as in
  \cref{thm:2}(i). Note first that, since $p$ is affine, on
  substituting $x$ for each $x_a$ in the right-hand equation
  of~\eqref{eq:11}, we have that $u_b \equiv_{q,b} v_b$ for all
  $b \in B$. To show $p$ is hyperaffine, it suffices, by the left
  equation of~\eqref{eq:11}, to prove that
  $u_b(p(\lambda a.\, p(\lambda a'.\, x_{aa'}))) \equiv_{q,b}
  u_b(p(\lambda a.\, x_{aa}))$ for all $b \in B$. But we calculate
  that
  \begin{align*}
    u_b(p(\lambda a.\, p(\lambda
    a'.\, x_{aa'}))) & \equiv_{q,b} v_b(p(\lambda a'.\,
    x_{\alpha(b),a'}))
    \equiv_{q,b} u_b(p(\lambda a'.\, x_{\alpha(b),a'}))\\
    & \equiv_{q,b} v_b(x_{\alpha(b),\alpha(b)})
    \equiv_{q,b} u_b(p(\lambda a.\, x_{aa}))
  \end{align*}
  using the right equation of~\eqref{eq:11} three times and the fact that
  $u_b \equiv_{q,b} v_b$ once.
\end{proof}

\begin{Thm}
  \label{thm:3}
  An algebraic theory $\mathbb{T}$ presents a cartesian closed variety
  if, and only if, it is hyperaffine--unary.
\end{Thm}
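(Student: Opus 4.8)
The plan is to combine Johnstone's recognition theorem (\cref{thm:2}) with \cref{lem:2}. Since \cref{thm:2} concerns non-degenerate theories, I would first dispose of the degenerate cases by hand: in $\mathbb{T}_{\mathbf 1}$ and $\mathbb{T}_{\mathbf 2}$ every $T(I)$ is a singleton or empty, so each operation has a manifestly unique hyperaffine--unary decomposition, and both theories present cartesian closed varieties; so the equivalence holds vacuously there. For non-degenerate $\mathbb{T}$, \cref{thm:2} reduces the theorem to showing that conditions (i) and (ii) of \cref{thm:2} together are equivalent to being hyperaffine--unary. The organising device is \cref{lem:2}: once (i) holds, affine and hyperaffine coincide, so the affine operations form a non-degenerate hyperaffine subtheory $\mathbb{H} \subseteq \mathbb{T}$ (closure under substitution and containment of the projections are immediate from affineness), which by \cref{prop:10,prop:11,prop:9} carries a Boolean algebra $B = H(2)$, a zero-dimensional topology $\J$, and an isomorphism $\mathbb{H} \cong \mathbb{T}_{\BJ}$. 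I shall use the characterisation \eqref{eq:7} of a hyperaffine $h$ through the relations $\equiv_{h^{(i)}}$ on the $B$-sets $T(J)$, and the fact that for such $h$ the placed equality $\equiv_{h,i}$ coincides with $\equiv_{h^{(i)}}$.

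A single auxiliary fact feeds both implications: every unary $m \in T(1)$ is \emph{maximally non-constant}, meaning $m(x) \equiv_d m(y)$ forces $d = 0 \in B$. Assuming (ii), I would apply it with $q = d \in T(2)$, $\alpha = \mathrm{id}$ on $\mathbf 2$, and $u_1 = u_2 = m$: if $d(m(x),m(y)) = m(y)$ then $q(\lambda b.\, u_b(x_b)) = m(x_2)$ is independent of $1 \in \mathbf 2$, so (ii) makes $d$ independent of its first argument, i.e.\ $d = 0$. Assuming instead that $\mathbb{T}$ is hyperaffine--unary, the same relation exhibits $\dc d m = \dc{\pi_2}{m}$, so $d = \pi_2 = 0$ by uniqueness. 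In particular there are no constant unary operations.

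For the implication from hyperaffine--unary to \cref{thm:2}, write $p = \dc h m$. Condition (i) then holds with $B = A$, $q = h$, $\alpha = \mathrm{id}$, $u_b = \pi_1$ and $v_b = m$: the left equation of \eqref{eq:11} is affineness of $h$, while the right is the placed equality $p \equiv_{h,b} m(x_b)$, which is $p \equiv_{h^{(b)}} m(x_b)$ and so holds by \eqref{eq:7} since $p = h(\lambda a.\, m(x_a))$. For (ii), I would decompose $q = \dc{h_q}{m_q}$; maximal non-constancy of $m_q$ makes $q$ depend on $j$ exactly when $h_q^{(j)} \neq 0$, and then, writing $r = q(\lambda b.\, u_b(x_{\alpha(b)})) = h_q(\lambda b.\, (m_q u_b)(x_{\alpha(b)}))$, one reads off from \eqref{eq:7} and the maximal non-constancy of each $m_q u_b$ that $r$ depends on $i$ whenever some $j \in \alpha^{-1}(i)$ has $h_q^{(j)} \neq 0$; this is the contrapositive of (ii).

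The substantial direction is (i)+(ii)$\Rightarrow$hyperaffine--unary. Given $p \in T(A)$, the unary part is forced to be the diagonal $m := p(\lambda a.\, x)$ by affineness. For existence I would take the data $q,u,v,\alpha$ from (i) and first \emph{normalise} it: the operation $q'(\lambda b.\, x_b) := q(\lambda b.\, u_b(x_b))$ is affine by the left equation of \eqref{eq:11}, hence hyperaffine by \cref{lem:2}, and a short check shows that $(q', \pi_1, m, \alpha)$ again witnesses (i) for $p$, with $p \equiv_{q',b} m(x_{\alpha(b)})$. Setting $h := q'(\lambda b.\, x_{\alpha(b)})$ — a hyperaffine operation whose reducts satisfy $h^{(a)} = \bigvee_{b \in \alpha^{-1}(a)} (q')^{(b)}$ by \cref{prop:11}(iii) — the relations $p \equiv_{(q')^{(b)}} m(x_a)$ for $b \in \alpha^{-1}(a)$, together with the $\J$-closure of the ideal $\sheq{p}{m(x_a)}$ (\cref{lem:1}(ii)), upgrade to $p \equiv_{h^{(a)}} m(x_a)$ for every $a$, whence $p = h(\lambda a.\, m(x_a)) = \dc h m$ by \eqref{eq:7}. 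Uniqueness is then clean: if $\dc h m = \dc{h'}{m}$, then for $a \neq a'$ the common value $p$ forces $m(x_a) \equiv_{h^{(a)} \wedge (h')^{(a')}} m(x_{a'})$, so maximal non-constancy gives $h^{(a)} \wedge (h')^{(a')} = 0$ and hence $h^{(a)} = (h')^{(a)}$ for all $a$, i.e.\ $h = h'$ by \cref{prop:11}(i). I expect the main obstacle to be precisely this existence step: the placed equalities of \eqref{eq:11} are inherently one-coordinate-at-a-time and resist direct simultaneous substitution when $A$ is infinite, and the role of the normalisation $q \rightsquigarrow q'$ together with \cref{lem:2} is exactly to replace that substitution by a computation with the $\J$-closed ideals $\sheq{\,\ }{\,\ }$, in which infinite partitions are absorbed by $\J$-closure.
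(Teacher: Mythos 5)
Your proposal is correct, and its skeleton is the same as the paper's: dispose of the degenerate theories by hand, and for non-degenerate $\mathbb{T}$ reduce everything to conditions (i) and (ii) of \cref{thm:2}, with \cref{lem:2} supplying the passage from affine to hyperaffine. Where you diverge is in how the three sub-verifications (existence of decompositions, their uniqueness, and condition (ii)) are carried out, and the differences are genuine. The paper's existence argument is a direct chain of equalities applying the placed equalities of \eqref{eq:11} in all coordinates of $q$ at once, and its uniqueness argument and verification of (ii) are ad hoc manipulations with auxiliary operations $k', k''$ and with $h^{(\alpha^{-1}(a))}$. You instead factor everything through two devices: the ``maximal non-constancy'' of unary operations ($m(x) \equiv_d m(y)$ forces $d = 0$, available either from \cref{thm:2}(ii) or from unique decomposition --- your derivation of both versions checks out), and the characterisation \eqref{eq:7} of a hyperaffine operation via the relations $\equiv_{h^{(i)}}$. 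This buys a cleaner uniqueness proof, a shorter verification of (ii), and --- as you anticipate --- it sidesteps the one point where the paper's computation is really only justified coordinate-by-coordinate, namely the simultaneous replacement $q(\lambda b.\, u_b(p(x))) = q(\lambda b.\, v_b(x_{\alpha(b)}))$ over a possibly infinite $B$; your normalisation to the hyperaffine $q'$ and the ensuing coordinatewise argument handle this honestly. Two small remarks. First, once you have $p \equiv_{(q')^{(b)}} m(x_{\alpha(b)})$ for every $b$, the uniqueness clause of \eqref{eq:7} applied to $q'$ itself and the tuple $(m(x_{\alpha(b)}))_{b}$ already yields $p = q'(\lambda b.\, m(x_{\alpha(b)})) = \dc h m$; the detour through $h^{(a)} = \bigvee_{b \in \alpha^{-1}(a)}(q')^{(b)}$ and the $\J$-closure of $\sheq{p}{m(x_a)}$ is correct but redundant. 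Second, in the uniqueness step the inference from $h^{(a)} \wedge (h')^{(a')} = 0$ for all $a \neq a'$ to $h^{(a)} = (h')^{(a)}$ needs the observation that both families of reducts have join $1$ (by \cref{prop:11}(iii) with $U = A$), so that any $d \leqslant (h')^{(a)} \wedge (h^{(a)})'$ is disjoint from every $h^{(a'')}$ and hence zero; this is routine but should be said.
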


\begin{proof}
  Firstly, if $\mathbb{T}$ is degenerate then it is both cartesian
  closed and hyperaffine, so \emph{a fortiori} hyperaffine--unary.
  Thus, we may assume henceforth that $\mathbb{T}$ is non-degenerate
  and so apply Johnstone's characterisation theorem.
  
  We first prove the \textbf{only if} direction. To begin with, we
  show that each $p \in T(A)$ has some hyperaffine--unary
  decomposition. To this end, let $q,u,v,\alpha$ be as in
  \cref{thm:2}(i). Let $h \in T(A)$ be given by
  $h(x) = q(\lambda b.\, u_b(x_{\alpha(b)}))$. By the left condition
  of~\eqref{eq:11} $h$ is affine, and so it is hyperaffine by
  \cref{lem:2}. Let $m \in T(1)$ be given by
  $m(x) = p(\lambda a.\, x)$. We now calculate that
  \begin{align*}
    p(x) &= q(\lambda b.\, u_b (p(x))) 
    = q(\lambda b.\, v_b(x_{\alpha(b)})) 
    = q(\lambda b.\, u_b(p(\lambda a.\, x_{\alpha(b)}))) \\
    &= q(\lambda b.\, u_b(m(x_{\alpha(b)}))) = h(\lambda a.\, m(x_a))
  \end{align*}
  using in succession, the left equality in~\eqref{eq:11}; the right
  equality twice; the definition of $m$; and the definition of $h$.

  We now show this decomposition of $p$ is unique. Suppose we have
  $h,h' \in T(A)$ hyperaffine and $m,m' \in T(1)$ with
  $\dc h m = \dc {h'} {m'}$. As $h$ and $h'$ are affine, we have
  $m(x) = h(\lambda a.\, m(x)) = h'(\lambda a.\, m'(x)) = m'(x)$. We
  must show also that $h = h'$. Note that
  $h(\lambda a.\, m(x_{aa})) = h(\lambda a.\, h(\lambda a' .\,
  m(x_{aa'}))) = h(\lambda a.\, h'(\lambda a' .\, m(x_{aa'})))$, so
  that the right-hand side does not depend on $x_{aa'}$ whenever
  $a \neq a'$. Thus by \cref{thm:2}(ii), we conclude that
  $h(\lambda a.\, h'(\lambda a' .\, x_{aa'}))$ does not depend on
  $x_{aa'}$ for any $a \neq a'$. It follows that
  $h(\lambda a.\, h'(\lambda a' .\, x_{aa'})) = h(\lambda a.\,
  h'(\lambda a' .\, x_{aa})) = h(\lambda a.\, x_{aa})$ and so taking
  $x_{aa'} = y_a$, we conclude that
  $h'(\lambda a.\, y_{a}) = h(\lambda a.\, h'(\lambda a.\, y_{a})) =
  h(\lambda a.\, y_a)$ so that $h = h'$ as desired.

  We now prove the \textbf{if} direction. Supposing that every
  operation of $\mathbb{T}$ has a unique hyperaffine--unary
  decomposition, we prove (i) and (ii) of Theorem~\ref{thm:2}. For
  (i), given $p \in T(A)$ with decomposition $p = \dc h m$, we take
  $q=h$, $u_a = \mathrm{id}$, $v_a = m$ and $\alpha = \mathrm{id}$ to
  obtain the required data satisfying~\eqref{eq:11}. It remains to
  verify condition (ii). So let $q \in T(B)$, $u \in T(1)^B$ and
  $\alpha \colon B \rightarrow A$ be such that $p \in T(A)$ given by
  $p(x) = q(\lambda b.\, u_b(x_{\alpha(b)}))$ does not depend on
  $x_a$. Writing $q = \dc h m$, we have
  \begin{align*}
    p(x) &= \dc h m(\lambda b.\, u_b(x_{\alpha(b)}))
    = h(\lambda b.\, m(u_b(x_{\alpha(b)}))) \\
    &= h(\lambda b.\, h(\lambda b'.\, m(u_{b'}(x_{\alpha(b)})))) =
    h(\lambda b.\, n(x_{\alpha(b)})) = 
    k(\lambda a.\, n(x_a))
  \end{align*}
  where we define $n(x) = h(\lambda b'.\, m(u_{b'}(x)))$ and
  $k(x) = h(\lambda b.\, x_{\alpha(b)})$. Now consider the hyperaffine
  operations $k',k'' \in T(A+1)$ with
  \begin{equation*}
    k'(x, y) = k(x) \qquad \text{and} \qquad 
    k''(x, y) = k(x[y/x_a])\rlap{ .}
  \end{equation*}
  Because $p = \dc k n$ does not depend on $x_a$, the operations
  $\dc {k'} n$ and $\dc {k''} n$ are equal. By unicity of
  decompositions, we have $k' = k''$ in $T(A+1)$, so that $k(x)$ does
  not depend on $x_a$. We claim it follows that $h(y)$ does not depend
  on $y_b$ for any $b \in \alpha^{-1}(a)$. For indeed, we have that
  $x = k(\lambda i.\, x) = k^{(a)}(y,x) = h^{(\alpha^{-1}(a))}(y,x)$
  in $T(2)$, since $k(x)$ does not depend on $x_a$. Since, by
  hyperaffineness of $h$, we have $h(x) \equiv_{h,b} x_b$ for each
  $b \in B$, we conclude that, for any $b \in \alpha^{-1}(a)$, we have
  $x = h^{(\alpha^{-1}(a))}(y,x) \equiv_{h,b} y$ so that $h$ does not
  depend on $b$.
\end{proof}
The proof of Theorem~\ref{thm:3} just given provides an intellectually
honest account of the genesis of the ideas in this paper; but for good
measure, we will also give a direct proof of the theorem which does
not rely on~\cite{Johnstone1990Collapsed}. For now we only
show that every cartesian closed variety is hyperaffine--unary; we
will close the loop in Section~\ref{sec:general-case:-two} once we
have a better handle on what hyperaffine--unary theories are.

It will in fact be clearer if we prove something more general. Recall
that the \emph{copower} $I \cdot X$ of some $X \in \C$ by a set $I$ is
a coproduct of $I$ copies of $X$. We write
$\iota_i \colon X \rightarrow I \cdot X$ for the coproduct
coprojections, and given maps
$(f_i \colon X \rightarrow Y)_{i \in I}$, write
$\spn{f_i} \colon I \cdot X \rightarrow Y$ for the unique map with
$\spn{f_i} \circ \iota_i = f_i$ for each $i$.

\begin{Defn}[Complete theory of dual operations]
  \label{def:22}
  Let $\C$ be a category with all set-indexed copowers and let
  $X \in \C$. The \emph{complete theory of dual operations of $X$}
  $\mathbb{T}_X$ is the algebraic theory with:
  \begin{itemize}
  \item $T_X(I) = \C(X, I \cdot  X)$;
  \item Projection elements $\iota_i \in T(I)$;
  \item Substitution $\C(X, I \cdot X) \times \C(X, J  \cdot X)^I
    \rightarrow  \C(X, J \cdot X)$  given  by
    \begin{equation*}
      (t, u)  \mapsto X \xrightarrow{t} I \cdot X \xrightarrow{\spn{u_i}} J \cdot X\rlap{ .}
    \end{equation*}
  \end{itemize}
\end{Defn}
To a universal algebraist, this is the (infinitary) \emph{clone of
  co-operations} of $X$~\cite{Csakany1985Completeness}; to a category
theorist, it is the \emph{structure} of the hom-functor
$\C(X, \thg) \colon \C \rightarrow
\cat{Set}$~\cite{Lawvere1963Functorial}. The following standard result
is now~\cite[Theorem~III.2]{Lawvere1963Functorial}:

\begin{Prop}
  \label{prop:46}
  Let $\C$ be a category with set-indexed copowers and let $X \in \C$.
  For any $Y \in \C$, the set $\C(X,Y)$ is a model for $\mathbb{T}_X$
  with
  $\dbr{t}(f) = \spn{f_i} \circ t \colon X \rightarrow I \cdot X
  \rightarrow Y$ for each $t \in T(I)$. For any
  $g \colon Y \rightarrow Z$, the function
  $g \circ (\thg) \colon \C(X,Y) \rightarrow \C(X,Z)$ is a
  $\mathbb{T}_X$-homomorphism, and so we induce a factorisation
  \begin{equation}
    \label{eq:34}
    \cd[@!C@C-3em]{
      {\C} \ar@{-->}[rr]^-{K} \ar[dr]_-{\C(X, \thg)} & &
      {\mathbb{T}_X\text-\cat{Mod}\rlap{ ,}} \ar[dl]^-{U} \\ &
      {\cat{Set}}
    }
  \end{equation}
  which is universal among factorisations of $\C(X, \thg)$ through a
  variety.
\end{Prop}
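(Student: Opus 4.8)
The plan is to run Lawvere's structure--semantics argument in the bare-hands language of the paper: verify the model and homomorphism axioms directly from the universal property of copowers, and then extract the universal property of~\eqref{eq:34} from the equivalence $(\thg)\text-\cat{Mod}\colon\AT^{\mathrm{op}}\to\Var$ recorded after \cref{def:3}. Concretely, the universal property I aim to establish is: for any variety $\V$ with forgetful functor $U'$ and any concrete functor $L\colon\C\to\V$ with $U'L=\C(X,\thg)$, there is a unique concrete functor $H\colon\mathbb{T}_X\text-\cat{Mod}\to\V$ with $HK=L$.

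First I would check that $\C(X,Y)$ is genuinely a $\mathbb{T}_X$-model. The projection axiom is immediate, since $\dbr{\iota_i}(f)=\spn{f_j}\circ\iota_i=f_i$ by the defining property of the cotuple. For the substitution axiom one must show $\spn{f_j}\circ\spn{u_i}\circ t=\spn{\spn{f_j}\circ u_i}\circ t$ for $t\in T_X(I)$, $u\in T_X(J)^I$ and $f\in\C(X,Y)^J$; this follows by precomposing both sides of $\spn{f_j}\circ\spn{u_i}=\spn{\spn{f_j}\circ u_i}$ with each $\iota_i$ and invoking uniqueness of cotuples. The homomorphism claim for $g\colon Y\to Z$ is the identity $g\circ\spn{f_j}=\spn{g\circ f_j}$, again by uniqueness of cotuples. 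These checks are routine and give a functor $K$ over $\cat{Set}$, i.e. $UK=\C(X,\thg)$.

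The substantive part is the universal property, which I would reduce to a question about theories. Given $L\colon\C\to\V$ as above, pick a theory $\mathbb{S}$ realising $\V$, so that $L$ equips each $\C(X,Y)$ with natural $\mathbb{S}$-model structure. By the equivalence $(\thg)\text-\cat{Mod}$, a concrete functor $H\colon\mathbb{T}_X\text-\cat{Mod}\to\V$ with $HK=L$ is the same datum as a theory homomorphism $\varphi\colon\mathbb{S}\to\mathbb{T}_X$ with $\varphi^\ast K=L$; so it suffices to produce a unique such $\varphi$. The definition is forced: for $s\in S(I)$ set $\varphi_I(s)=\dbr{s}_{L(I\cdot X)}(\lambda i.\,\iota_i)\in\C(X,I\cdot X)=T_X(I)$, the interpretation of $s$ on the family of coprojections inside the $\mathbb{S}$-model $L(I\cdot X)$. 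That $\varphi$ preserves projections and substitution then follows by naturality of the $\mathbb{S}$-structure along the coprojections and cotuples of $I\cdot X$ and $J\cdot X$.

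The key step---and the main obstacle---is verifying $\varphi^\ast K=L$, and this is exactly where I would use that $L$ is a functor into a \emph{variety}: every $\C$-morphism, and in particular each cotuple $\spn{f_i}\colon I\cdot X\to Y$, is sent by $L$ to an $\mathbb{S}$-homomorphism, namely postcomposition $\spn{f_i}\circ(\thg)$. Hence for $s\in S(I)$ and $f\in\C(X,Y)^I$ I can compute $\dbr{s}_{L(Y)}(f)=\dbr{s}_{L(Y)}(\lambda i.\,\spn{f_i}\circ\iota_i)=\spn{f_i}\circ\dbr{s}_{L(I\cdot X)}(\lambda i.\,\iota_i)=\spn{f_i}\circ\varphi(s)=\dbr{\varphi(s)}_{K(Y)}(f)$, so the two $\mathbb{S}$-structures on $\C(X,Y)$ coincide and $\varphi^\ast K=L$. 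Uniqueness is then immediate from this displayed identity: any competing $\psi$ with $\psi^\ast K=L$ must satisfy $\spn{f_i}\circ\psi(s)=\dbr{s}_{L(Y)}(f)$ for all $Y$ and $f$, and specialising to $Y=I\cdot X$ with $f=\lambda i.\,\iota_i$ gives $\psi(s)=\dbr{s}_{L(I\cdot X)}(\lambda i.\,\iota_i)=\varphi(s)$, since $\spn{\iota_i}=1_{I\cdot X}$. This completes the verification that $K$ is the terminal factorisation.
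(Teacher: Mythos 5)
The paper offers no proof of \cref{prop:46}: it simply cites it as Theorem~III.2 of \cite{Lawvere1963Functorial}. Your argument is correct and is precisely the standard structure--semantics proof being invoked there: the model and homomorphism axioms reduce to uniqueness of cotuples, the reduction of the universal property to theory homomorphisms via the equivalence $(\thg)\text-\cat{Mod}$ is legitimate, and the definition $\varphi_I(s)=\dbr{s}_{L(I\cdot X)}(\lambda i.\,\iota_i)$ together with the computation $\dbr{s}_{L(Y)}(f)=\spn{f_i}\circ\varphi(s)$ (using that $L(\spn{f_i})$ is an $\mathbb{S}$-homomorphism) correctly establishes both existence and uniqueness. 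The only quibble is terminological: what you have shown is that $(K,\mathbb{T}_X\text-\cat{Mod})$ admits a unique comparison \emph{to} every other factorisation, so ``initial'' rather than ``terminal'' is the more usual word; the mathematics is unaffected.
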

When, in the above proposition, we take $\C$ itself to be a variety
$\mathbb{T}\text-\cat{Mod}$, and take $X = \mathbf{T}(1)$, the free
model on one generator, it is visibly the case that
$\mathbb{T}_X \cong \mathbb{T}$ and that $K$ is an \emph{isomorphism}.
Thus, the fact that any cartesian closed variety is hyperaffine--unary
follows from:

\begin{Prop}
  \label{prop:47}
  Let $\C$ be a category with finite products and set-indexed
  copowers, and suppose that each
  $(\thg) \times X \colon \C \rightarrow \C$ preserves copowers; in
  particular, this will be so if $\C$ is cartesian closed. For any
  $X \in \C$, the complete theory of dual operations $\mathbb{T}_X$ is
  hyperaffine--unary.
\end{Prop}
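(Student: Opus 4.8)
The plan is to produce, for each $t \in T_X(I) = \C(X, I \cdot X)$, a canonical hyperaffine--unary decomposition $t = \dc h m$, and to show it is the only one. The key observation is that $\C$ has finite products, and in particular a terminal object $1$, so that the unique map $!\colon X \to 1$ together with the codiagonal-type maps coming from the copower structure give us enough ``projection'' and ``collapse'' data to separate the arity-$I$ information (which should become hyperaffine) from the arity-$1$ information (which should become unary). Concretely, I would first identify the unary part: given $t \colon X \to I \cdot X$, the operation $m \in T_X(1) = \C(X, X)$ governing how $t$ acts ``diagonally'' is obtained by postcomposing $t$ with the codiagonal $\spn{1_X}\colon I \cdot X \to X$, i.e.\ $m = t(\lambda i.\, \iota_{\ast}) $ in the notation of \cref{def:11}, which is exactly the affine collapse $t(\lambda i.\, x)$ appearing in the definition of affineness.

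Next I would produce the hyperaffine part $h$. The natural candidate is to ``strip off'' $m$ from $t$; the subtlety is that $m$ need not be invertible, so one cannot literally divide. Instead I would use the hypothesis that $(\thg) \times X$ preserves copowers. The point is that a map $X \to I \cdot X$ can be analysed by pairing with the identity: consider $\langle t, 1_X\rangle \colon X \to (I \cdot X) \times X$, and use the copower-preservation isomorphism $(I \cdot X) \times X \cong I \cdot (X \times X)$ to read off, for each $i$, a ``component'' $X \to X \times X$. These components assemble into the data of a partition-like family, and the resulting operation $h \in T_X(I)$ will be affine by construction (its diagonal collapse is the identity, since pairing with $1_X$ and then collapsing returns $1_X$). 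That $h$ is in fact hyperaffine is then the heart of the matter, and I expect it to be the main obstacle: I would verify the identity $h(\lambda i.\, h(\lambda j.\, x_{ij})) = h(\lambda i.\, x_{ii})$ directly from the copower-preservation isomorphism, using that iterating the $(\thg)\times X$-copower comparison is coherent (essentially that $(I \cdot (I \cdot X)) \times X \cong I \cdot (I \cdot (X \times X))$ in a way compatible with the diagonal $I \to I \times I$). This is the step where all the structure is used, and it reduces to chasing the copower coprojections through the product, which is routine but must be done carefully.

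Having built the decomposition $t = \dc h m$, existence is in hand. For uniqueness, I would argue exactly as in the uniqueness half of \cref{thm:3}: if $\dc h m = \dc{h'}{m'}$ with $h, h'$ hyperaffine (hence affine) and $m, m'$ unary, then collapsing diagonally gives $m = h(\lambda i.\, m) = h'(\lambda i.\, m') = m'$ immediately; and then the hyperaffineness of $h$ and $h'$ forces $h = h'$ by the commutation-and-idempotence computation of that proof (using \cref{lem:11} and affineness to show $h(\lambda a.\, h'(\lambda a'.\, x_{aa'}))$ collapses to $h(\lambda a.\, x_{aa})$, whence $h' = h$). The only genuinely new content relative to \cref{thm:3} is the \emph{existence} of the decomposition, and within that, the verification that the operation $h$ extracted via copower-preservation is hyperaffine rather than merely affine.
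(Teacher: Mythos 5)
Your existence argument is essentially the paper's: the unary part is the diagonal collapse $m = t(\lambda i.\, x)$, and the affine part is split off using the isomorphism coming from copower-preservation, after which the coherence of the comparison maps gives hyperaffineness. (Be careful, though, with the phrase ``read off, for each $i$, a component $X \rightarrow X \times X$'': a map into a copower has no such components. The clean formulation, which is the one the paper uses, is to realise $I \cdot X$ itself as $(I \cdot 1) \times X$ via the coprojections $X \cong 1 \times X \xrightarrow{\iota_i \times X} (I \cdot 1) \times X$, so that $t$ literally \emph{is} a pair $(h,m)$ with $h \colon X \rightarrow I \cdot 1$ and $m \colon X \rightarrow X$, and affineness is exactly the condition $m = \mathrm{id}$.)

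The genuine gap is uniqueness, which you dismiss as ``exactly as in the uniqueness half of \cref{thm:3}''. That argument does not transfer. Uniqueness of hyperaffine--unary decompositions is not a formal consequence of the theory axioms: in the theory of pointed sets, $\pi_1, \pi_2 \in T(2)$ are both hyperaffine, the constant $c \in T(1)$ is unary, and $\dc {\pi_1} c = c = \dc {\pi_2} c$ while $\pi_1 \neq \pi_2$. Concretely, the identity you invoke---that $h(\lambda a.\, h'(\lambda a'.\, x_{aa'}))$ collapses to $h(\lambda a.\, x_{aa})$ for hyperaffine $h, h'$---is false in general: combined with affineness it would yield $h = h'$ for \emph{any} two hyperaffine operations of the same arity. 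In the proof of \cref{thm:3} that identity is only reached via Johnstone's condition (ii) of \cref{thm:2}, which is what strips the unary $m$ off the equation $h(\lambda a.\, m(x_{aa})) = h(\lambda a.\, h'(\lambda a'.\, m(x_{aa'})))$; that condition is not available for $\mathbb{T}_X$ (the whole point of \cref{prop:47} is to avoid appealing to \cref{thm:2}), and \cref{lem:11} is no substitute, since it applies only to theories \emph{all} of whose operations are hyperaffine, which $\mathbb{T}_X$ is not. The repair is to extract uniqueness from the product structure itself: having identified the hyperaffine (equivalently, affine) operations of $\mathbb{T}_X$ as exactly the maps $(h, \mathrm{id}) \colon X \rightarrow (I \cdot 1) \times X$ and the unary ones as the $m \colon X \rightarrow X$, one checks that $\dc h m$ is interpreted by $(\mathrm{id} \times m) \circ (h, \mathrm{id}) = (h, m)$, so the decomposition of $t = (h,m)$ is unique simply because the two components of a map into a product are unique.
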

\begin{proof}
  Since $(\thg) \times X$ preserves copowers, we may realise the
  copower $I \cdot X$ as the product $(I \cdot 1) \times X$ via the
  coprojection maps
  \begin{equation*}
    X \xrightarrow{\cong} 1 \times X \xrightarrow{\iota_i \times X} (I \cdot 1)
    \times X\rlap{ .}
  \end{equation*}
  Thus, we may write operations in $T(I)$ as pairs
  $(h,m) \colon X \rightarrow (I \cdot 1) \times X$ where
  $h \colon X \rightarrow I \cdot 1$ and $m \colon X \rightarrow X$.
  From the definition of substitution in $\mathbb{T}_X$, such an
  operation is \emph{affine} just when
  \begin{equation*}
    X \xrightarrow{(h,m)} (I \cdot 1) \times X \xrightarrow{\pi_2} X \quad = \quad  X \xrightarrow{\mathrm{id}} X\rlap{ ,}
  \end{equation*}
  i.e., just when $m = \mathrm{id}$. We claim that any such $t = (h,
  \mathrm{id})$ is in fact \emph{hyperaffine}. This follows from
  commutativity in:
  \begin{equation*}
    \cd[@!C@C-0em]{
      &
      (I \cdot 1) \times X
      \ar@/^6pt/[dr]^-{(I \cdot 1) \times (h,\mathrm{id})} \\
      X \ar[d]_-{(h,\mathrm{id})} 
      \ar@/^6pt/[ur]^-{(h,\mathrm{id})}
      \ar[r]^-{(\Delta, \mathrm{id})}&
      X \times X \times X \ar[r]^-{h \times h \times X}& 
      (I \cdot 1) \times (I \cdot 1) \times X \ar[d]^-{\theta \times X}
      \\
      (I \cdot 1) \times X \ar[rr]_-{(\Delta \cdot 1) \times X} \ar[urr]_-{\Delta \times X} & &
      ((I \times I) \cdot 1) \times X
    }
  \end{equation*}
  whose uppermost composite is the interpretation of
  $t(\lambda i.\, t(\lambda j.\, x_{ij}))$ and whose lower composite
  interprets $t(\lambda i. x_{ii})$; here
  $\theta \colon (I \cdot 1) \times (I \cdot 1) \rightarrow (I \times
  I) \cdot 1$ is the canonical isomorphism characterised by
  $\theta \cdot (\iota_i, \iota_j) = \iota_{(i,j)}$.

  So the hyperaffine operations of $\mathbb{T}_X$ are those of
  the form
  $(h, \mathrm{id}) \colon X \rightarrow (1 \cdot I) \times X$; and,
  of course, the unary operations are those of the form
  $m \colon X \rightarrow X$. Moreover, if $h$ is hyperaffine and $m$
  is unary, then the operation $\dc h m = h(\lambda i.\, m(i))$ is
  interpreted by the composite
  \begin{equation*}
    X \xrightarrow{(h, \mathrm{id})} (1 \cdot I) \times X \xrightarrow{\mathrm{id} \times m} (1 \cdot I) \times X \quad = \quad X \xrightarrow{(h,m)} (1 \cdot I) \times X\rlap{ .}
  \end{equation*}
  So each operation $t = (h,m) \in T(I)$ has a unique
  hyperaffine--unary decomposition into the hyperaffine
  $(h,\mathrm{id}) \in T(I)$ and the unary $m \in T(1)$, as desired.
\end{proof}

\section{Matched pairs of theories}
\label{sec:match-pairs-theor}

As the name suggests, a hyperaffine--unary theory has a hyperaffine
part and a unary part. In this section, we show that these two parts,
together with their actions on each other, provide an entirely
equivalent description of the notion of hyperaffine--unary theory. To
begin with, let us record how we extract out the two parts of a
hyperaffine--unary theory.

\begin{Prop}
  \label{prop:14}
  Let $\mathbb{T}$ be a hyperaffine--unary theory. The hyperaffine
  operations of $\mathbb{T}$ form a subtheory $\mathbb{H}$, called the
  \emph{hyperaffine part}; while $T(1)$ forms a monoid $M$ under
  substitution, called the \emph{unary part}.
\end{Prop}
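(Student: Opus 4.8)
The plan is to verify the two assertions separately, handling the monoid first since it is essentially formal. For the \emph{unary part}, substitution restricts to a binary operation $T(1) \times T(1) \to T(1)$ (identifying $T(1)^1$ with $T(1)$), and I would simply read off the monoid axioms from the three clauses of Definition~\ref{def:1}: the associativity axiom $(t(u))(v) = t(\lambda i.\, u_i(v))$ specialises to associativity of this operation, while the two unit axioms specialise to $t(\pi_1) = t$ and $\pi_1(t) = t$, exhibiting $\pi_1$ as a two-sided unit. Thus $M = T(1)$ is a monoid with unit $\pi_1$.

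The substantive claim is that the hyperaffine operations form a subtheory $\mathbb{H}$. First I would check that each projection $\pi_i \in T(I)$ is hyperaffine: affineness is the identity $\pi_i(\lambda j.\, x) = x$, and the hyperaffine identity holds because both sides of $\pi_i(\lambda j.\, \pi_i(\lambda k.\, x_{jk})) = \pi_i(\lambda j.\, x_{jj})$ evaluate to $x_{ii}$. The crux is closure under substitution: given hyperaffine $t \in T(I)$ and hyperaffine $u_i \in T(J)$ for each $i \in I$, I must show the composite $t(\lambda i.\, u_i)$ is hyperaffine.

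My approach to closure is to route through affineness, which composes trivially: since each of $t, u_i$ is in particular affine, we have $t(\lambda i.\, u_i)(\lambda j.\, x) = t(\lambda i.\, u_i(\lambda j.\, x)) = t(\lambda i.\, x) = x$, so the composite is affine. It therefore suffices to prove the key reduction that \emph{in a hyperaffine--unary theory, every affine operation is already hyperaffine}. Here I would exploit the uniqueness of decompositions: writing an affine $s \in T(I)$ as $s = \dc h m$ with $h$ hyperaffine and $m$ unary, affineness of $h$ gives $s(\lambda i.\, x) = h(\lambda i.\, m(x)) = m(x)$, while affineness of $s$ gives $s(\lambda i.\, x) = x$; hence $m = \pi_1$ and so $s = \dc h {\pi_1} = h$ is hyperaffine. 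Combining this with the previous paragraph shows the composite $t(\lambda i.\, u_i)$, being affine, is hyperaffine, completing the verification that $\mathbb{H}$ is a subtheory, its axioms being inherited from $\mathbb{T}$.

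The main obstacle is precisely this closure under substitution: a priori, composites of hyperaffine operations need not satisfy the hyperaffine identity, and a direct manipulation of $t(\lambda a.\, t(\lambda b.\, \cdots))$ would be awkward. The hyperaffine--unary hypothesis is exactly what rescues us, via the clean characterisation ``affine $=$ hyperaffine'' that uniqueness of decompositions supplies; as a by-product this also shows the hyperaffine operations are precisely those whose unary part is $\pi_1$. One could alternatively invoke Lemma~\ref{lem:2} together with the fact, established in the proof of Theorem~\ref{thm:3}, that a hyperaffine--unary theory satisfies condition~(i) of Theorem~\ref{thm:2}, but the decomposition argument is more direct and self-contained.
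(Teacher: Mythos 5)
Your proof is correct, and its skeleton coincides with the paper's: the only substantive point is closure of the hyperaffine operations under substitution, which both you and the paper reduce to the two facts that affine operations are closed under substitution and that, in the present setting, every affine operation is already hyperaffine. Where you differ is in how the latter fact is justified. The paper simply cites Lemma~\ref{lem:2}, whose hypothesis is condition~(i) of Johnstone's Theorem~\ref{thm:2}; this is available for a hyperaffine--unary theory only via the ``if'' direction of Theorem~\ref{thm:3}, so the paper's one-line proof implicitly leans on that chain of results. You instead derive ``affine $\Rightarrow$ hyperaffine'' directly from the existence of a decomposition $s = \dc{h}{m}$: affineness of $h$ forces $s(\lambda i.\,x) = m(x)$, affineness of $s$ forces this to be $x$, so $m = \pi_1$ and $s = h$. (Note that you only use \emph{existence} of the decomposition here, not uniqueness, despite your phrasing.) This buys a self-contained argument independent of \cite{Johnstone1990Collapsed}, and as a bonus identifies the hyperaffine operations as exactly those with unary part $\pi_1$; the paper's route is shorter on the page but carries the hidden dependency. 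Your explicit verifications of the monoid axioms for $T(1)$ and of hyperaffineness of projections are routine and consistent with what the paper leaves unsaid.
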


\begin{proof}
  The only non-trivial point is that hyperaffine operations are closed
  under substitution. But every affine operation is hyperaffine by
  Lemma~\ref{lem:2}, and the affine operations in any theory are
  easily closed under substitution.
\end{proof}

Clearly, if we know $\mathbb{H}$ and $M$ then we know every operation
of $\mathbb{T}$. However, the monoid structure of $M$ and the
substitution structure of $\mathbb{H}$ do \emph{not} determine the
substitution structure of $\mathbb{T}$. For this, we also need to
record how $\mathbb{H}$ and $M$ act on each other via substitution in
$\mathbb{T}$.

\begin{Prop}
  \label{prop:15}
  Let $\mathbb{T}$ be a hyperaffine--unary theory with hyperaffine and
  unary parts $\mathbb{H}$ and $M$. We may determine operations
  \begin{equation*}
    \begin{aligned}
      M \times H(I) & \rightarrow H(I) & \qquad \quad H(I) \times M^I & \rightarrow M \\
      (m,h) & \mapsto m^\ast h & (h,n) & \mapsto h \rhd n
    \end{aligned}
  \end{equation*}
  as follows: if $h \in H(I)$ and
  $m \in M$, then $m^\ast h \in H(I)$ is unique such that
  \begin{equation}\label{eq:12}
    m(h(\lambda i.\, x_i)) = (m^\ast h)(\lambda i.\, m(x_i))\rlap{ ;}
  \end{equation}
  while if
  $h \in H(I)$ and $n \in M^I$, then $h \rhd n \in M$ is unique
  such that
  \begin{equation}\label{eq:13}
    h(\lambda i.\, n_i(x)) = (h \rhd n)(x)\rlap{ .}
  \end{equation}
  These operations uniquely determine the substitution
  structure of $\mathbb{T}$ via the formulae:
  \begin{itemize}
  \item $\pi_i = \dc {\pi_i} 1$ in $T(I)$, where $1 = \pi_1$
    is the identity element of $M$;
  \item If $\dc h m \in T(I)$ and $\dc k n \in T(J)^I$ (i.e.,
    $\dc {k_i} {n_i} \in T(J)$ for each $i$), then 
    \begin{equation}\label{eq:14}
      \dc h m(\dc k n) = \dc {h(\lambda i.\, m^\ast k_i)} {h \rhd (\lambda i.\, mn_i)} \text{ in } T(J)\rlap{ .}
    \end{equation}
  \end{itemize}

\end{Prop}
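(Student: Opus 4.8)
The plan is to obtain both operations by reading off the unique hyperaffine--unary decompositions of suitable composites, and then to verify the two displayed substitution formulae by direct computation in variable notation; the only real work is the composition formula~\eqref{eq:14}. To produce $m^\ast h$, I consider the operation $m(h) \in T(I)$ sending $\lambda i.\, x_i$ to $m(h(\lambda i.\, x_i))$. Since $h$ is affine, its diagonal is $m(h(\lambda i.\, x)) = m(x)$, so the unary part of the unique decomposition of $m(h)$ is precisely $m$; I then \emph{define} $m^\ast h$ to be its hyperaffine part, which lies in $H(I)$ and satisfies~\eqref{eq:12} by construction. Uniqueness is automatic: any hyperaffine $h'$ obeying~\eqref{eq:12} yields $\dc{h'}m = m(h) = \dc{m^\ast h}m$, whence $h' = m^\ast h$ by unicity of decompositions (\cref{def:13}). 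There is nothing to build for $h \rhd n$: the element $h(\lambda i.\, n_i) \in T(1)$ satisfies~\eqref{eq:13} directly from the associativity axiom, and is the unique unary operation doing so, as is seen by specialising $x$ to $\pi_1$.

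For the substitution formulae, the identity $\pi_i = \dc{\pi_i}1$ is routine, since $\pi_i$ is (readily checked to be) hyperaffine and $\dc{\pi_i}1(\lambda j.\, x_j) = \pi_i(\lambda j.\, x_j) = x_i$. For~\eqref{eq:14}, expanding the left-hand operation on $\lambda j.\, x_j$ and applying the defining equation~\eqref{eq:12} of $m^\ast k_i$ gives
\[ \dc h m(\dc k n)(\lambda j.\, x_j) = h(\lambda i.\, m(k_i(\lambda j.\, n_i(x_j)))) = h(\lambda i.\, (m^\ast k_i)(\lambda j.\, (m n_i)(x_j)))\rlap{ ,} \]
whereas the right-hand operation, with $q = h \rhd (\lambda i.\, m n_i) = h(\lambda i.\, m n_i)$, evaluates to
\[ h(\lambda i.\, (m^\ast k_i)(\lambda j.\, q(x_j)))\rlap{ .} \]
So~\eqref{eq:14} comes down to the equality of these two expressions.

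This last equality is the crux, and the step I expect to be the main obstacle. My plan is to treat it directly, exploiting that both $m^\ast k_i$ and $h$ are hyperaffine. Substituting $q(x_j) = h(\lambda i'.\, (m n_{i'})(x_j))$ into the second expression and then commuting the hyperaffine operations $m^\ast k_i$ (of arity $J$) and $h$ (of arity $I$) past one another by \cref{lem:11} turns it into
\[ h(\lambda i.\, h(\lambda i'.\, (m^\ast k_i)(\lambda j.\, (m n_{i'})(x_j))))\rlap{ ;} \]
the hyperaffineness of the outer $h$ now collapses the pair $(i,i')$ to its diagonal $i = i'$, returning exactly the first expression. With~\eqref{eq:14} established, the final ``uniquely determine'' clause is immediate: each $t \in T(I)$ is $\dc h m$ for unique hyperaffine $h$ and unary $m$, so $\pi_i = \dc{\pi_i}1$ and~\eqref{eq:14} recover every projection and every substitution of $\mathbb{T}$ from the data of $\mathbb{H}$, $M$, and the two actions $\ast$ and $\rhd$ alone.
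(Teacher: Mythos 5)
Your proposal is correct and follows essentially the same route as the paper: $m^\ast h$ is extracted as the hyperaffine part of $m(h(\lambda i.\,x_i))$ using unicity of decompositions, $h \rhd n$ is just the substitution $h(n)$, and the verification of~\eqref{eq:14} uses exactly the paper's chain of steps (definition of $m^\ast k_i$, hyperaffineness of $h$, commutativity in $\mathbb{H}$ via \cref{lem:11}, and associativity), merely traversed from the right-hand side back to the left rather than left to right.
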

\begin{proof}
  For the unique existence of $m^\ast h$, the composite operation
  $m(h(\lambda i.\, x_i)) \in T(I)$ admits a unique hyperaffine--unary
  decomposition; but since $m(h(\lambda i.\, x)) = m(x)$, the unary
  part of this must be $m$. The corresponding hyperaffine part
  $m^\ast h \in H(I)$ is thus the unique operation making~\eqref{eq:12}
  hold. The unique existence of $h \rhd n$ is trivial: take it as the
  substitution $h(n)$ in $\mathbb{T}$, and then~\eqref{eq:13} follows
  from associativity of substitution.
  It remains to prove that these operations determine the projections
  and substitutions for $\mathbb{T}$. That $\pi_i = \dc {\pi_i} 1 $ is
  trivial; as for~\eqref{eq:14}, we calculate that:
  \setlength{\belowdisplayskip}{-12pt}
  \begin{align*}
    \dc h m(\dc k n)(x) &= h(\lambda i.\, m(k_i(\lambda j.\, n_i(x_j)))) &
    \text{definition of $\dc \ \ $} & \quad \\
    &= h(\lambda i.\, (m^\ast k_i)(\lambda j.\, m(n_i(x_j)))) & \text{definition of $m^\ast k_i$}\\
    &= h(\lambda i.\, h(\lambda i'.\, (m^\ast k_i)(\lambda j.\, m(n_{i'}(x_j))))) & \text{hyperaffineness of $h$}\\
    &= h(\lambda i.\, (m^\ast k_i)(\lambda j.\, h(\lambda i'.\, m(n_{i'}(x_j))))) & \text{commutativity in $\mathbb{H}$}\\
    &= (h(\lambda i.\, m^\ast k_i))(\lambda j.\, h(\lambda i.\, mn_{i})(x_j)) &  \text{associativity in $\mathbb{H}$}\\
    &= \dc {h(\lambda i.\, m^\ast k_i)} {h \rhd (\lambda i.\, mn_i)}(x)  & \text{definition of $\dc \ \ $}\rlap{.}
  \end{align*}
\end{proof}

So any hyperaffine--unary algebraic theory is determined by its
hyperaffine and unary parts, together with the operations
of~\eqref{eq:12} and~\eqref{eq:13}. However, if given a hyperaffine
theory $\mathbb{H}$ and a monoid $M$, together with operations of the
same form, we should \emph{not} expect to obtain a structure of
hyperaffine--unary theory on the sets $T(I) = H(I) \times M$: for
although the preceding proposition indicates how to define
substitution from these operations, it does not ensure that the axioms
of a theory are satisfied. For this, we must impose axioms on the
operations relating $\mathbb{H}$ and $M$.

\begin{Defn}[Matched pairs of theories]
  \label{def:14}
   A \emph{matched pair of theories}
  $\dc {\mathbb{H}} M$ comprises a
  hyperaffine theory $\mathbb{H}$ and a monoid $M$ together with
  operations:
  \begin{equation}\label{eq:15}
    \begin{aligned}
      M \times H(I) & \rightarrow H(I) & \qquad \quad H(I) \times M^I & \rightarrow M \\
      (m,h) & \mapsto m^\ast h & (h,n) & \mapsto h \rhd n
    \end{aligned}
  \end{equation}
  satisfying the following axioms:
  \begin{enumerate}[(i)]
  \item For $m \in M$, the maps
    $m^\ast(\thg) \colon H(I) \rightarrow H(I)$ give a homomorphism of
    algebraic theories
    ${m^\ast \colon \mathbb{H} \rightarrow \mathbb{H}}$:
    \begin{align}
        m^\ast(\pi_i) &= \pi_i \label{eq:16}\\
  m^\ast(h(k)) &= (m^\ast h)(\lambda i.\, m^\ast k_i)\rlap{ ;} \label{eq:17}
    \end{align}
  \item The maps in (i) constitute an $M$-action on
    $\mathbb{H}$:
    \begin{align}
        1^\ast h &= h \label{eq:18}\\
  m^\ast n^\ast h &= (mn)^\ast(h)\rlap{ ;} \label{eq:19}
    \end{align}
  \item The operations $\mathord{\rhd} \colon H(I) \times M^I \rightarrow
    M$ make $M$ into a $\mathbb{H}$-model $\mdl[M]$:
    \begin{align}
        \pi_i \rhd m &= m_i \label{eq:20} \\
  h(k) \rhd (m) &= h \rhd (\lambda i.\, k_i \rhd m)\rlap{ ;} \label{eq:21}
    \end{align}
  \item Right multiplication by $n \in M$ is a
    $\mathbb{H}$-model homomorphism $(\thg)n \colon \mdl[M]
    \rightarrow \mdl[M]$:
    \begin{equation}
  (h \rhd m) n = h \rhd (\lambda i.\, m_in)\rlap{ ;} \label{eq:22}
    \end{equation}
  \item Left multiplication by $n \in M$ is a $\mathbb{H}$-model
    homomorphism ${n(\thg) \colon \mdl[M] \rightarrow n^\ast\mdl[M]}$,
    where $n^\ast \mdl[M]$ is the $\mathbb{H}$-model obtained by
    pulling back $\mdl[M]$ along the theory homomorphism $n^\ast
    \colon \mathbb{H} \rightarrow \mathbb{H}$:
    \begin{equation}
  n(h \rhd m) = n^\ast h \rhd (\lambda i.\, nm_i)\rlap{ ;} \label{eq:23}
    \end{equation}
  \item For $h \in H(I)$, the map
    $(\thg)^\ast h \colon M \rightarrow H(I)$ is a $\mathbb{H}$-model
    homomorphism $\mdl[M] \rightarrow \mdl[H](I)$, where $\mdl[H](I)$ is the
  free $\mathbb{H}$-model structure on $H(I)$:
    \begin{equation}
      (k \rhd m)^\ast(h) = k(\lambda j.\, m_j^\ast h)\rlap{ .}\label{eq:24}
    \end{equation}
  \end{enumerate}
  We call a matched pair of theories \emph{finitary} or
  \emph{non-degenerate} when $\mathbb{H}$ is so.

  A \emph{homomorphism}
  $\dc \varphi f \colon \dc {\mathbb{H}} M \rightarrow \dc
  {\mathbb{H}'} {M'}$ of matched pairs of theories is a homomorphism
  of theories $\varphi \colon \mathbb{H} \rightarrow \mathbb{H}'$ and
  a monoid homomorphism $f \colon M \rightarrow M'$ such that for all
  $h \in H(I)$, $m \in M$ and $n \in M^I$, we have:
  \begin{equation}\label{eq:25}
    \varphi(m^\ast h) = f(m)^\ast(\varphi(h)) \quad \text{and} \quad f(h \rhd n) = \varphi(h) \rhd (\lambda i.\, f(n_i))\rlap{ .}
  \end{equation}
\end{Defn}

We now show soundness and completeness of this axiomatisation.

\begin{Prop}
  \label{prop:16}
  \looseness=-1
  If $\mathbb{T}$ is a hyperaffine--unary theory, then its hyperaffine
  and unary parts $\mathbb{H}$ and $M$ constitute a matched pair of
  theories $\mathbb{T}^\downarrow = \dc {\mathbb{H}} M$ under the
  operations of~\cref{prop:15}; and $\mathbb{T}^\downarrow$ is
  finitary or non-degenerate just when $\mathbb{T}$ is so.
\end{Prop}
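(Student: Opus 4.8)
The plan is to verify the six axioms of \cref{def:14} in turn, drawing on three ingredients: the defining equations~\eqref{eq:12} and~\eqref{eq:13} for $m^\ast h$ and $h \rhd n$; the theory axioms for $\mathbb{T}$ (chiefly associativity of substitution); and the hyperaffineness of operations in $\mathbb{H}$ together with their mutual commutativity from \cref{lem:11}. The guiding principle is that whenever an axiom asserts an equality between two \emph{hyperaffine} operations, I would not check it head-on but instead show that the proposed right-hand side satisfies the defining property~\eqref{eq:12} of the relevant $m^\ast(\thg)$, and then appeal to the uniqueness of hyperaffine--unary decompositions to conclude the equality.

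The $\rhd$-axioms come essentially for free, since $h \rhd n$ is by definition the substitution $h(n)$ in $\mathbb{T}$: the model laws~\eqref{eq:20} and~\eqref{eq:21} of (iii) are just the projection and associativity axioms for $\mathbb{T}$, and the right-action law~\eqref{eq:22} of (iv) is again associativity (using that the product in $M$ is substitution). For the $m^\ast$-axioms of (i) and (ii) I would feed the right-hand sides into~\eqref{eq:12}: to prove~\eqref{eq:17}, for instance, I would expand $m((h(k))(\lambda j.\, x_j)) = m(h(\lambda i.\, k_i(\lambda j.\, x_j)))$, apply~\eqref{eq:12} first for $h$ and then for each $k_i$ to reach $((m^\ast h)(\lambda i.\, m^\ast k_i))(\lambda j.\, m(x_j))$, and invoke uniqueness; the laws~\eqref{eq:16},~\eqref{eq:18},~\eqref{eq:19} run identically, the last using that the product $mn$ in $M$ is the substitution $m(n)$. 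The mixed axiom (v) combines both: for~\eqref{eq:23} I would expand $n(h \rhd m) = n(h(\lambda i.\, m_i))$ and apply~\eqref{eq:12} for $n$ to land directly on $n^\ast h \rhd (\lambda i.\, nm_i)$, with no appeal to uniqueness needed since both sides already live in $M$.

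The main obstacle is axiom (vi), that is~\eqref{eq:24}, asserting that $(\thg)^\ast h \colon \mdl[M] \to \mdl[H](I)$ is an $\mathbb{H}$-model homomorphism. Here I would verify that $k(\lambda j.\, m_j^\ast h)$ satisfies the defining property of $(k \rhd m)^\ast h$. Writing $k \rhd m = k(\lambda j.\, m_j)$ and expanding $(k \rhd m)(h(\lambda i.\, x_i))$ via~\eqref{eq:12} for each $m_j$ yields $k(\lambda j.\, (m_j^\ast h)(\lambda i.\, m_j(x_i)))$, whereas the proposed right-hand side, fed through~\eqref{eq:13}, produces $k(\lambda j.\, (m_j^\ast h)(\lambda i.\, k(\lambda j'.\, m_{j'}(x_i))))$. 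To reconcile these I would set $y_{jj'} = (m_j^\ast h)(\lambda i.\, m_{j'}(x_i))$ and use the hyperaffineness of $k$ in the form $k(\lambda j.\, k(\lambda j'.\, y_{jj'})) = k(\lambda j.\, y_{jj})$, together with the commutativity of $k$ with each $m_j^\ast h$ from \cref{lem:11}, to pass between the two expressions; uniqueness of the decomposition then delivers~\eqref{eq:24}. This braiding of hyperaffineness, commutativity, and uniqueness is the most delicate bookkeeping in the argument.

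For the final clause, I would note that $\pi_1 = \dc {\pi_1} 1$ and $\pi_2 = \dc {\pi_2} 1$, so that by uniqueness of decompositions $\pi_1 \neq \pi_2$ holds in $T(2)$ exactly when it holds in $H(2)$; hence $\mathbb{T}$ is non-degenerate if and only if $\mathbb{H}$ is. Likewise, since $\dc h m = h(\lambda i.\, m(x_i))$ depends on a variable $i$ precisely when $h$ does (the unary $m$ being automatically finitary), $\mathbb{T}$ is finitary if and only if $\mathbb{H}$ is. Both conditions match the definitions of finitary and non-degenerate matched pair, completing the proof.
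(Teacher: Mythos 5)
Your proposal is correct and follows essentially the same route as the paper: each axiom is verified by expanding via the defining equations \eqref{eq:12}--\eqref{eq:13} and the theory axioms, with axiom (vi) handled by exactly the paper's braid of hyperaffineness of $k$, commutativity from \cref{lem:11}, and uniqueness of decompositions (you merely read the chain in the opposite direction). The final clause, which the paper treats as trivial, is also handled adequately.
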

\begin{proof}
    For~\eqref{eq:16}--\eqref{eq:19}, we calculate using~\eqref{eq:12}
  and the theory axioms of $\mathbb{T}$ and conclude using unicity of
  hyperaffine--unary decompositions. For~\eqref{eq:16}, the
  calculation is
  $(m^\ast \pi_i)(\lambda j.\, m(x_j)) = m(\pi_i(\lambda j.\, x_j)) =
  m(x_i) = \pi_i(\lambda j.\, m(x_j))$. 
  For~\eqref{eq:17}:
  \begin{align*}
    (m^\ast(t(u)))(\lambda j.\, m(x_j)) &= 
    m(t(\lambda i.\, u_i(x))) = (m^\ast t)(\lambda i.\, m(u_i(x))) \\
    & =
    (m^\ast t)(\lambda i.\, (m^\ast u_i)(\lambda j.\, m(x_j))))
    \\ &=
    ((m^\ast t)(\lambda i.\, m^\ast u_i))(\lambda j.\, m(x_j))\rlap{ .}
  \end{align*}
  For~\eqref{eq:18}, we have
  $(1^\ast h)(x) = (1^\ast h)(\lambda i.\, 1(x_i)) = 1(h(x)) = h(x)$,
  and for~\eqref{eq:19},
  \begin{align*}
    (m^\ast n^\ast h)(\lambda i.\, m(n(x_i))) &= 
    m((n^\ast h)(\lambda i.\, n(x_i)) = 
    m(n(h(x))) \\ &= (mn)(h(x)) = ((mn)^\ast h)(\lambda i.\, m(n(x_i)))\rlap{ .}
  \end{align*}
  Next,~\eqref{eq:20} and~\eqref{eq:21} follow directly from the
  theory axioms for $\mathbb{T}$. Finally,
  for~\eqref{eq:22}--\eqref{eq:24}, we calculate
  using~\eqref{eq:12} and \eqref{eq:13} and conclude using unicity of
  decompositions. For~\eqref{eq:22} the calculation is that
  $(h \rhd m)(n)(x) = h(\lambda i.\, m_i(n(x))) = (h \rhd (\lambda
  i.\, m_i n))(x)$. For~\eqref{eq:23} we have:
  \begin{equation*}
    (n(h \rhd m))(x) = n(h(\lambda i.\, m_i(x)) = (n^\ast h)(\lambda i.\, n(m_i(x))) = (n^\ast h \rhd (\lambda i.\, nm_i))(x)\rlap{ ;}
  \end{equation*}
  and finally,
  for~\eqref{eq:24} we have:
  \begin{align*}
    ((k \rhd m)^\ast h)(\lambda i.\, (k \rhd m)(x_i)) &= 
    (k \rhd m)(h(x)) = k(\lambda j.\, m_j(h(x))) \\
    &= k(\lambda j.\, (m_j^\ast h)(\lambda i.\, m_j(x_i))) \\
    &= k(\lambda j.\, k(\lambda j'.\, (m_j^\ast h)(\lambda i.\, m_{j'}(x_i)))) \\
    &= k(\lambda j.\, (m_j^\ast h)(\lambda i.\, k(\lambda j'.\, m_{j'}(x_i)))) \\
    &= (k(\lambda j.\, (m_j^\ast h)))(\lambda i.\, (k \rhd m)(x_i))\rlap{ .}\qedhere
  \end{align*}
\end{proof}
\begin{Prop}
  \label{prop:17}
  For any matched pair of theories $\dc {\mathbb{H}} M$, there is
  a hyperaffine--unary theory $\mathbb{H} \bc M$, the
  \emph{bicrossed product} of $\mathbb{H}$ and $M$, with
  $(\mathbb H \bc M)^\downarrow \cong \dc {\mathbb{H}} M$.
\end{Prop}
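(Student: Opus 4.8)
The plan is to realise $\mathbb{H} \bc M$ as the theory whose operations of arity $I$ are the formal pairs $T(I) = H(I) \times M$, with projections $\pi_i = (\pi_i, 1)$ and with substitution dictated by formula~\eqref{eq:14} of \cref{prop:15}: given $(h,m) \in T(I)$ and a family $(k_i, n_i)_{i \in I} \in T(J)^I$, we set
\[
  (h,m)\bigl(\lambda i.\, (k_i, n_i)\bigr) = \bigl(h(\lambda i.\, m^\ast k_i),\ h \rhd (\lambda i.\, m n_i)\bigr) \in T(J)\rlap{ .}
\]
Everything on the right is built from the hyperaffine substitution of $\mathbb{H}$, the monoid multiplication of $M$, and the matched-pair operations $(\thg)^\ast(\thg)$ and $(\thg) \rhd (\thg)$. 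The work then divides into three parts: checking that this data satisfies the algebraic-theory axioms of \cref{def:1}; checking that the resulting theory is hyperaffine--unary; and checking that its associated matched pair is the one we started from.

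For the theory axioms, the two unit laws are quick: the right unit law $t(\lambda i.\, \pi_i) = t$ uses~\eqref{eq:16}, the right unit law in $\mathbb{H}$, and the fact that $h \rhd (\lambda i.\, m) = m$ for affine $h$ (affineness of $h$ interpreted in the $\mathbb{H}$-model $\mdl[M]$), while $\pi_i(u) = u_i$ uses~\eqref{eq:18} and~\eqref{eq:20}. The real content is associativity, $(t(u))(v) = t(\lambda i.\, u_i(v))$, which I would verify one component at a time. For the $H$-component, expanding both sides and using~\eqref{eq:17} and~\eqref{eq:19} reduces the claim to showing that $h(\lambda i.\, (m^\ast k_i)(\lambda j.\, m_1^\ast l_j))$ agrees with the analogous term built from the individual scalars $(m n_i)^\ast l_j$, where $m_1 = h \rhd (\lambda i.\, m n_i)$; the bridge is axiom~\eqref{eq:24}, which rewrites $m_1^\ast l_j$ as $h(\lambda i'.\, (m n_{i'})^\ast l_j)$, after which commutativity (\cref{lem:11}) followed by hyperaffineness of $h$ collapses the resulting double application of $h$ to its diagonal. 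The $M$-component is handled in exactly parallel fashion: \eqref{eq:21} reduces it to a statement about $\mdl[M]$ as an $\mathbb{H}$-model, while~\eqref{eq:22} and~\eqref{eq:23} move the scalar $m$ and the aggregate $m_1$ through $\rhd$, and then commutativity and hyperaffineness of $h$---now interpreted in $\mdl[M]$---again collapse a double $\rhd$ to its diagonal. This associativity calculation is the main obstacle, and it is precisely here that all six matched-pair axioms (together with commutativity of hyperaffine operations) are forced to intervene.

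Next I would identify the two parts of $\mathbb{H}\bc M$. Since every operation of $\mathbb{H}$ is affine, $H(1)$ is a singleton, so $T(1) = H(1) \times M \cong M$ as a monoid. A short computation with the substitution formula shows that $(h,m)$ is affine exactly when $m = 1$; and I would check directly that each $(h,1)$ is in fact hyperaffine, the two components of the hyperaffineness identity following from hyperaffineness of $h$ in $\mathbb{H}$ and in $\mdl[M]$ respectively. (Alternatively, once condition~(i) of \cref{thm:2} is available this is \cref{lem:2}, but the direct check is cleaner here.) Evaluating $\dc{(h,1)}{(\pi_1,m)}$ via the substitution formula returns $(h, m)$, so each $(h,m)$ has the unique hyperaffine--unary decomposition with hyperaffine part $(h,1)$ and unary part $(\pi_1, m)$; thus $\mathbb{H}\bc M$ is hyperaffine--unary.

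It remains to identify $(\mathbb{H}\bc M)^\downarrow$ with $\dc{\mathbb{H}}{M}$. The hyperaffine subtheory is $\{(h,1)\}$, and $(h,1) \mapsto h$ is an isomorphism of theories onto $\mathbb{H}$ (the substitution of two hyperaffine operations has trivial $M$-part, since $h \rhd (\lambda i.\, 1) = 1$); likewise $(\pi_1, m) \mapsto m$ is a monoid isomorphism of the unary part onto $M$. To finish, I would compute the operations that \cref{prop:15} extracts from the substitution of $\mathbb{H}\bc M$: evaluating $(\pi_1,m)\bigl(\lambda\cdot.\,(h,1)\bigr)$ gives $(m^\ast h, m)$, whose hyperaffine part recovers the given $m^\ast h$, and evaluating $(h,1)\bigl(\lambda i.\,(\pi_1,n_i)\bigr)$ gives $(\pi_1, h \rhd (\lambda i.\, n_i))$, recovering the given $h \rhd n$. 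Hence the extracted matched-pair structure coincides with $\dc{\mathbb{H}}{M}$, completing the construction.
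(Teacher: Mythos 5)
Your proposal is correct and follows essentially the same route as the paper's own proof: the same underlying sets $H(I)\times M$, the same substitution formula~\eqref{eq:14}, the same verification of the theory axioms (with the associativity check invoking exactly the axioms~\eqref{eq:17}, \eqref{eq:19}, \eqref{eq:21}--\eqref{eq:24} together with commutativity and hyperaffineness in the same places), the same characterisation of the hyperaffine and unary parts as the elements $\dc{h}{1}$ and $\dc{1}{m}$, and the same final computation recovering $m^\ast h$ and $h \rhd n$. The only nitpick is that the $M$-component of the hyperaffineness identity for $\dc{h}{1}$ is just $1 = 1$ (needing only $h \rhd (\lambda i.\, 1) = 1$, i.e.\ affineness in $\mdl[M]$), rather than hyperaffineness of $h$ in $\mdl[M]$ as you state, but this does not affect the argument.
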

\begin{proof}
  For each set $I$, we take $(\mathbb{H} \bc M)(I) = H(I) \times M$, and write a
  typical element like before as $\dc h m$. We define projection
  elements $\pi_i = \dc {\pi_i} 1$ and substitution operations by the
  formula~\eqref{eq:14}, and claim that, upon doing so, we obtain an
  algebraic theory $\mathbb{H} \bc M$. For this, we must check the three
  theory axioms. Firstly:
    \begin{equation*}
      \dc h m(\lambda i.\, \dc {\pi_i} 1) = \dc {h(\lambda i.\, m^\ast
        \pi_i)} {h \rhd (\lambda i.\, m1)} = \dc {h(\lambda i.\, \pi_i)}
      {h \rhd (\lambda i.\, m)} = \dc h m
  \end{equation*}
  by the definition, axiom~\eqref{eq:16}, and axiom~\eqref{eq:21}.
  Secondly,
  \begin{equation*}
    \dc {\pi_i} 1 (\dc k n) = \dc {\pi_i(\lambda j.\, 1^\ast k_j)} {\pi_i \rhd (\lambda j.\, 1n_j)} = \dc {1^\ast k_i} {1n_i} = \dc {k_i} {n_i}\rlap{ .}
  \end{equation*}
  by the definition, axiom~\eqref{eq:20} and axiom~\eqref{eq:18}.
  Finally, for associativity of substitution, we first compute that
  $(\dc h m(\dc k n))(\dc \ell p)$ is given by
  \begin{align*}
     &{} \mathrel{\phantom{=}}
     \dc {h(\lambda i.\, m^\ast k_i)} {h \rhd (\lambda i.\, mn_i)}(\dc \ell p) \\ &=
     \dc {(h(\lambda i.\, m^\ast k_i))(\lambda j.\, (h \rhd (\lambda i.\, mn_i))^\ast \ell_j)} {h(\lambda i.\, m^\ast k_i) \rhd (\lambda j.\, (h \rhd (\lambda i.\, mn_i))p_j)}
  \end{align*}
  while $\dc h m(\lambda i.\, \dc {k_i} {n_i}(\dc \ell p))$ is given by
  \begin{align*}
    &{} \mathrel{\phantom{=}} \dc h m(\lambda i.\, \dc {k_i(\lambda j.\, n_i^\ast \ell_j)} {k_i \rhd (\lambda j.\, n_ip_j)})\\
    &= \dc {h(\lambda i.\, m^\ast(k_i(\lambda j.\, n_i^\ast \ell_j)))} {h \rhd (\lambda i.\, m(k_i \rhd (\lambda j.\, n_ip_j)))}\rlap{ .}
  \end{align*}
  Comparing first terms we have:
  \begin{align*}
    &{} \mathrel{\phantom{=}} (h(\lambda i.\, m^\ast k_i))(\lambda j.\, h \rhd (\lambda i.\, mn_i)^\ast \ell_j) \\
    &= h(\lambda i.\, m^\ast k_i(\lambda j.\, (h \rhd (\lambda i'.\, mn_{i'}))^\ast \ell_j)) & \text{associativity in $\mathbb{H}$}\\
    &= h(\lambda i.\, m^\ast k_i(\lambda j.\, h(\lambda i'.\, (mn_{i'})^\ast \ell_j))) & \text{\eqref{eq:24}}\\
    &= h(\lambda i.\, h(\lambda i'.\, m^\ast k_i(\lambda j.\, (mn_{i'})^\ast \ell_j))) & \text{commutativity in $\mathbb{H}$}\\
    &= h(\lambda i.\, m^\ast k_i(\lambda j.\, (mn_{i})^\ast \ell_j)) &
    \text{hyperaffinness}\\
    &= h(\lambda i.\, m^\ast k_i(\lambda j.\, m^\ast n_{i}^\ast \ell_j)) &
    \text{\eqref{eq:19}}\\
    &= h(\lambda i.\, m^\ast(k_i(\lambda j.\, n_{i}^\ast \ell_j)))\rlap{ ;} &
    \text{\eqref{eq:17}}
  \end{align*}
  while comparing second terms, we have
  \begin{align*}
    &{} \mathrel{\phantom{=}} h(\lambda i.\, m^\ast k_i) \rhd (\lambda j.\, (h \rhd (\lambda i.\, mn_i))p_j) \\
    &= h(\lambda i.\, m^\ast k_i) \rhd (\lambda j.\, h \rhd (\lambda i.\, mn_ip_j)) & \text{\eqref{eq:22}}\\
    &= h \rhd (\lambda i.\, m^\ast k_i \rhd (\lambda j.\, h \rhd (\lambda i'.\, mn_{i'}p_j))) &
    \text{\eqref{eq:21}} \\
    &= h \rhd (\lambda i.\, h \rhd (\lambda i'.\, m^\ast k_i \rhd (\lambda j.\, mn_{i'}p_j))) &
    \text{commutativity in $\mathbb{H}$} \\
    &= h \rhd (\lambda i.\, m^\ast k_i \rhd (\lambda j.\, mn_{i}p_j))) &
    \text{hyperaffineness} \\
    &=h \rhd (\lambda i.\, m(k_i \rhd (\lambda j.\, n_ip_j))) &
    \text{\eqref{eq:23}}
  \end{align*}
  as desired. So $\mathbb{H} \bc M$ is an algebraic theory.

  We next characterise the unary and hyperaffine operations of
  $\mathbb{H} \bc M$. Clearly the unary operations are those of the
  form ${\dc 1 m}$. As for the hyperaffines, note that
  $\dc h m \in (\mathbb{H} \bc M)(I)$ will be hyperaffine when
  ${\dc h m (\lambda i.\, x) = x}$, i.e., when
  $\dc {h(\lambda i.\, x)} m = \dc {x} {1}$. In particular, we must
  have $m = 1$; and such an element will be hyperaffine just when also
  $\dc h 1(\lambda i.\, \dc h 1(\lambda j.\, x_{ij})) = \dc h
  1(\lambda i.\, x_{ii})$, i.e.,
  $\dc {h(\lambda i.\, h(\lambda j.\, x_{ij}))} 1 = \dc {h(\lambda
    i.\, x_{ii})} 1$. Since each $h \in H(I)$ is hyperaffine, we
  conclude that the hyperaffines in $(\mathbb{H} \bc M)(I)$ are all
  elements of the form $\dc h 1$. It follows from this
  characterisation that each $\dc h m \in (\mathbb{H} \bc M)(I)$ has
  the unique hyperaffine--unary decomposition
  $\dc h m(x) = \dc h 1(\lambda i.\, \dc 1 m(x_i))$, whence it follows
  that $\mathbb{H} \bc M$ is a hyperaffine--unary theory.

  It remains to show that
  $(\mathbb{H} \bc M)^\downarrow \cong \dc {\mathbb{H}} M$. Writing
  ${\mathbb{H}'}$ and ${M'}$ for the hyperaffine and unary parts of
  $\mathbb{H} \bc M$, we have isomorphisms
  $\varphi_I \colon H(I) \rightarrow H'(I)$ and
  $f \colon M \rightarrow M'$ given by $h \mapsto \dc h 1$ and
  $m \mapsto \dc 1 m$. Because
  $\dc h 1(\lambda i.\, \dc {k_i} 1) = \dc {h(k)} 1$ and
  $\pi_i = \dc {\pi_i} 1$ in $\mathbb{H} \bc M$, the maps $\varphi_I$
  constitute an isomorphism of theories
  $\mathbb{H} \rightarrow \mathbb{H}'$; and because
  $\dc 1 m(\dc 1 n) = \dc 1 {mn}$ and $1 = \pi_1 = \dc 1 1$ in $T(1)$,
  the map $f$ is a monoid isomorphism $M \rightarrow M'$.

  We now verify the two axioms in~\eqref{eq:25}. For the first,
  observe that the operation $(\thg)^\ast$ on
  $(\mathbb{H} \bc M)^\downarrow$ has $(\dc 1 m)^\ast(\dc h 1)$ given
  by the unique element $\dc k 1 \in H'(I)$ for which
  $\dc k 1(\lambda i.\, \dc 1 m(x_i)) = \dc 1 m(\dc h 1)$. But
  $\dc k 1(\lambda i.\, \dc 1 m(x_i)) = \dc k m$ and
  $\dc 1 m(\dc h 1) = \dc {m^\ast h} 1$, whence
  $(\dc 1 m)^\ast(\dc h 1) = \dc {m^\ast h} 1$, i.e.,
  $f(m)^\ast(\varphi(h)) = \varphi(m^\ast h)$ as required. For the
  second axiom in~\eqref{eq:25}, note that the operation $\rhd$ on
  $(\mathbb{H} \bc M)^\downarrow$ is given by
  $\dc h 1 \rhd (\lambda i.\, \dc 1 {m_i}) = \dc h 1(\lambda i.\, \dc
  1 {m_i}) = \dc 1 {h \rhd m}$, which says that
  $\varphi(h) \rhd (\lambda i.\, f(m_i)) = f(h \rhd m)$ as required.

  Finally, it is trivial to observe that $\mathbb{T}$ is finitary or
  non-degenerate if and only if $\mathbb{H}$ is so, i.e., if and only
  if $\mathbb{T}^\downarrow$ is so.
\end{proof}

So we have a correspondence between hyperaffine--unary theories and
matched pairs of theories; we now describe how this correspondence
interacts with semantics.

\begin{Defn}[Models of matched pairs of theories]
  \label{def:15}
  Let $\dc {\mathbb{H}} M$ be a matched pair of theories. A $\dc
  {\mathbb{H}} M$-model $\mdl[X]$ is a set $X$ endowed with
  both $\mathbb{H}$-model structure $h, x \mapsto \dbr{h}(x)$ and $M$-set
  structure $m, x \rightarrow m \cdot x$ in such a way that
  \begin{equation}\label{eq:26}
    (h \rhd m) \cdot x = \dbr{h}(\lambda i.\, m_i \cdot x) \qquad \text{and} \qquad
    n \cdot \dbr{h}(x) = \dbr{n^\ast h}(\lambda i.\, n \cdot x)
  \end{equation}
  for all $h \in H(I)$, $x \in X^I$, $m \in M^I$ and $n \in M$; while
  a homomorphism of $\dc {\mathbb{H}} M$-models is a function
  preserving both $\mathbb{H}$-model and $M$-set structure. We write
  $\dc {\mathbb{H}} M\text-\cat{Mod}$ for the variety of
  $\dc {\mathbb{H}} M$-models.
\end{Defn}

\begin{Prop}
  \label{prop:18}
  Let $\mathbb{T}$ be a hyperaffine--unary theory with
  $\mathbb{T}^\downarrow = \dc {\mathbb{H}} M$. The variety of
  $\mathbb{T}$-models is concretely isomorphic to the variety of $\dc
  {\mathbb{H}} M$-models.
\end{Prop}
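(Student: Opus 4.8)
The plan is to realise the concrete isomorphism as the identity on underlying sets, matching a $\mathbb{T}$-model structure on $X$ with a $\dc{\mathbb{H}}{M}$-model structure on the same set. In one direction, given a $\mathbb{T}$-model $\mdl[X]$, I restrict its interpretation along the subtheory inclusion $\mathbb{H} \hookrightarrow \mathbb{T}$ (\cref{prop:14}) to obtain an $\mathbb{H}$-model structure $\dbr{h}(x)$, and I set $m \cdot x = \dbr{m}(x)$ for $m \in M = T(1)$ to obtain an $M$-set structure, the unitality and associativity of the action being exactly the model axioms $\dbr{\pi_1}(x) = x$ and $\dbr{m(n)}(x) = \dbr{m}(\dbr{n}(x))$. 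In the other direction, given a $\dc{\mathbb{H}}{M}$-model, I define $\dbr{t}(x) = \dbr{h}(\lambda i.\, m \cdot x_i)$, where $t = \dc{h}{m}$ is the unique decomposition guaranteed by \cref{thm:3}. The compatibility of the two passages rests on the single formula $\dbr{\dc{h}{m}}(x) = \dbr{h}(\lambda i.\, m \cdot x_i)$, which holds in any $\mathbb{T}$-model by unwinding the substitution model axiom applied to $\dc{h}{m} = h(\lambda i.\, m)$.

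For the passage from $\mathbb{T}$-models to $\dc{\mathbb{H}}{M}$-models, the two axioms \eqref{eq:26} are immediate. Since $h \rhd m$ was defined in \cref{prop:15} as the substitution $h(m)$ in $\mathbb{T}$, the first axiom is just the substitution model axiom $\dbr{h(m)}(x) = \dbr{h}(\lambda i.\, \dbr{m_i}(x))$; and since $m^\ast h$ was characterised by the identity \eqref{eq:12} holding in $T(I)$, the second axiom is the interpretation of that identity in $\mdl[X]$. This direction therefore costs nothing beyond bookkeeping.

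The substantive work, and the main obstacle, is checking that the reconstructed interpretation $\dbr{t}(x) = \dbr{h}(\lambda i.\, m \cdot x_i)$ really is a $\mathbb{T}$-model structure. The projection axiom is clear from $\pi_i = \dc{\pi_i}{1}$ and unitality of the $M$-action. For the substitution axiom, with $t = \dc{h}{m}$ and $u_i = \dc{k_i}{n_i}$, I use the bicrossed substitution formula \eqref{eq:14} to expand $\dbr{t(u)}(x)$ and compute $\dbr{t}(\lambda i.\, \dbr{u_i}(x))$ directly. Applying the two axioms \eqref{eq:26} together with associativity of the $M$-action rewrites both sides into expressions of the form $\dbr{h}(\lambda i.\, \dbr{m^\ast k_i}(\lambda j.\, \dots))$, after which the equality follows by first commuting the inner hyperaffine operations past one another (\cref{lem:11}) and then collapsing a doubled application of $\dbr{h}$ via the hyperaffineness identity $\dbr{h}(\lambda i.\, \dbr{h}(\lambda i'.\, z_{ii'})) = \dbr{h}(\lambda i.\, z_{ii})$. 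Once this is established, the two constructions are seen to be mutually inverse: on underlying sets they are the identity, and the round trips return the original structures because $\dbr{\dc{h}{1}} = \dbr{h}$ on hyperaffines and $\dc{\pi_1}{m}$ recovers $m \cdot (\thg)$ on unaries. Finally, a function $X \to Y$ is a $\mathbb{T}$-homomorphism if and only if it preserves both the $\mathbb{H}$-structure and the $M$-action, again by the decomposition formula $\dbr{\dc{h}{m}}(x) = \dbr{h}(\lambda i.\, m \cdot x_i)$, so the bijection of structures respects homomorphisms and the result is a concrete isomorphism of varieties.
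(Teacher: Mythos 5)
Your proposal is correct and follows essentially the same route as the paper: restriction to the hyperaffine and unary parts in one direction, the formula $\dbr{\dc{h}{m}}(x) = \dbr{h}(\lambda i.\, m\cdot x_i)$ in the other, with the substitution axiom verified by exactly the paper's chain of rewrites using \eqref{eq:14}, \eqref{eq:26}, commutativity of hyperaffine operations and the hyperaffineness identity. The only quibble is that unique decomposability comes from the hypothesis that $\mathbb{T}$ is hyperaffine--unary (Definition~\ref{def:13}), not from Theorem~\ref{thm:3}.
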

\begin{proof}
  Firstly, restricting back a $\mathbb{T}$-model structure on
  a set $X$ to the hyperaffine and unary parts yields
  $\mathbb{H}$-model and $M$-set structure which satisfy the axioms
  in~\eqref{eq:26} due to the definitions of the operations
  $(\thg)^\ast$ and $\rhd$ in $\mathbb{T}^\downarrow$.
  Conversely, $\mathbb{H}$-model and $M$-set structure on $X$ yields
  $\mathbb{T}$-model structure with
  \begin{equation}\label{eq:27}
    \dbr{ \,\dc h m \,}(x) = \dbr{h}(\lambda i.\, m \cdot x_i)\rlap{ .}
  \end{equation}
  The projection axioms hold as every projection is in
  $\mathbb{H}$. As for substitution:
    \begin{align*}
    &{}\phantom{=} {\ } \dbr{\, \dc h m\, }(\lambda i.\, \dbr{\, \dc {k_i} {n_i}}(x))\\
    &= \dbr{h}(\lambda i.\, m \cdot \dbr{k_i}(\lambda j.\, n_i \cdot x_j)) & \text{definition}\\
    &= \dbr{h}(\lambda i.\, \dbr{m^\ast k_i}(\lambda j.\, mn_i \cdot x_j)) & \text{\eqref{eq:26}}\\
    &= \dbr{h}(\lambda i.\, \dbr{h}(\lambda i'.\, \dbr{m^\ast k_i}(\lambda j.\, mn_{i'} \cdot x_j))) & \text{hyperaffinness}\\
    &= \dbr{h}(\lambda i.\, \dbr{m^\ast k_i}(\lambda j.\, \dbr{h}(\lambda i'.\, mn_{i'} \cdot x_j))) & \text{commutativity}\\
    &= \dbr{h}(\lambda i.\, \dbr{m^\ast k_i}(\lambda j.\, (h \rhd (\lambda i'.\, mn_{i'})) \cdot x_j)) & \text{\eqref{eq:26}}\\
    &= \dbr{h(\lambda i.\, m^\ast k_i)}(\lambda j.\, (h \rhd (\lambda i.\, mn_{i})) \cdot x_j)) & \text{$\mathbb{H}$-model axiom}\\
    &= \dbr{\, \dc {h(\lambda i.\, m^\ast k_i)} {h \rhd (\lambda i.\, mn_i)}\,}(x) & \text{definition}\\
    &= \dbr{\, \dc h m(\dc k n) }(x) & \text{\eqref{eq:14}.}
  \end{align*}
  It is easy to see that these assignments are mutually inverse;
  and in light of~\eqref{eq:27}, the homomorphisms match up under the correspondence.
\end{proof}

To conclude this section, we make the correspondence between
hyperaffine--unary theories and matched pairs of theories functorial.
We write:
\begin{itemize}[itemsep=0.25\baselineskip]
\item $\dc {\HA} {\Un}$ (resp., $\dc {\HA^\omega} {\Un}$) for
  the category of non-degenerate matched pairs (resp., finitary
  matched pairs) of theories and their homomorphisms;
\item $\HAUn$ (resp., $\HAUnf$) for the full subcategory of $\AT$ on
  the non-degenerate hyperaffine--unary (resp.\, finitary
  hyperaffine--unary) theories;
\item $\CCVar$ (resp., $\CCVar^\omega$) for the full subcategory of
  $\Var$ on the non-degenerate cartesian closed (resp., cartesian
  closed finitary) varieties.
\end{itemize}

Now the assignment
$\dc {\mathbb{H}} M \mapsto \dc {\mathbb{H}} M\text-\cat{Mod}$ can be
made functorial. Indeed, given a homomorphism
$\dc \varphi f \colon \dc {\mathbb{H}} M \rightarrow \dc {\mathbb{H}'}
{M'}$ of non-degenerate matched pairs of theories, we have a concrete
functor
${\dc \varphi f}^\ast \colon \dc {\mathbb{H}'} {M'}\text-\cat{Mod}
\rightarrow \dc {\mathbb{H}} {M}\text-\cat{Mod}$ which acts by
$\varphi^\ast$ and $f^\ast$ on the $\mathbb{H}'$-model and $M'$-set
structures. In this way, we obtain a functor
$(\thg)\text-\cat{Mod} \colon \dc{\HA}{\Un}^\mathrm{op}
\rightarrow \Var$ which, in light of Proposition~\ref{prop:18}, the
final clause of Proposition~\ref{prop:16}, and Theorem~\ref{thm:3},
must land inside $\CCVar$.

The assignment $(\thg)^\downarrow$ of
Proposition~\ref{prop:16} can also be made functorial:

\begin{Prop}
  \label{prop:19}
  The assignment $\mathbb{T} \mapsto \mathbb{T}^\downarrow$ is the
  action on objects of a functor
  $(\thg)^\downarrow \colon \HAUn \rightarrow \dc {\HA} {\Un}$, which
  on morphisms takes
  $\varphi \colon \mathbb{S} \rightarrow \mathbb{T}$ to the homomorphism
  $\dc {\res \varphi {\mathbb{H}}} {\res \varphi M} \colon
  \mathbb{S}^\downarrow \rightarrow \mathbb{T}^\downarrow$.
\end{Prop}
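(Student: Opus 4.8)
The plan is to treat the object assignment as already settled by Proposition~\ref{prop:16} and to concentrate on the morphism assignment, checking in turn that it is well defined, that it lands in $\dc{\HA}{\Un}$, and that it is functorial. So I fix a homomorphism $\varphi \colon \mathbb{S} \rightarrow \mathbb{T}$ of hyperaffine--unary theories and write $\mathbb{S}^\downarrow = \dc{\mathbb{H}}{M}$ and $\mathbb{T}^\downarrow = \dc{\mathbb{H}'}{M'}$.

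First I would confirm that restriction makes sense. Hyperaffineness of $h \in S(I)$ is the conjunction of the equations $h(\lambda i.\, x) = x$ and $h(\lambda i.\, h(\lambda j.\, x_{ij})) = h(\lambda i.\, x_{ii})$, and these are preserved by any theory homomorphism; hence $\varphi$ carries hyperaffines to hyperaffines and restricts to a homomorphism of hyperaffine theories $\res{\varphi}{\mathbb{H}} \colon \mathbb{H} \rightarrow \mathbb{H}'$. Likewise $f \defeq \res{\varphi}{M} = \varphi_1 \colon S(1) \rightarrow T(1)$ fixes $\pi_1$ and preserves substitution, so it is a monoid homomorphism $M \rightarrow M'$; note $f(m) = \varphi(m)$ for $m \in M$.

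Next I would verify the two axioms of~\eqref{eq:25}. The second is essentially immediate: by~\eqref{eq:13} the element $h \rhd n$ is just the substitution $h(n)$ formed in $\mathbb{S}$, so since $\varphi$ preserves substitution and $h \rhd n$ lies in $M = S(1)$, I get $f(h \rhd n) = \varphi(h)(\lambda i.\, \varphi(n_i)) = \varphi(h) \rhd (\lambda i.\, f(n_i))$. For the first axiom I would apply $\varphi$ to the defining identity~\eqref{eq:12} of $m^\ast h$; preservation of substitution turns it into $f(m)(\varphi(h)(\lambda i.\, x_i)) = \varphi(m^\ast h)(\lambda i.\, f(m)(x_i))$ in $\mathbb{T}$. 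Since $\varphi(m^\ast h)$ is hyperaffine, this is precisely the property~\eqref{eq:12} characterizing $f(m)^\ast \varphi(h)$; invoking the uniqueness in that characterization gives $\varphi(m^\ast h) = f(m)^\ast \varphi(h)$.

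Functoriality will then be automatic, since restriction of the identity homomorphism is the identity and restriction of a composite, on either the hyperaffine or the unary part, is the composite of the restrictions. The hard part---though it is only mildly hard---will be the first axiom of~\eqref{eq:25}, in that it alone requires appealing to the unicity of hyperaffine--unary decompositions (through the uniqueness clause of~\eqref{eq:12}) in the target theory; the remaining verifications are formal, resting only on the preservation of equations by theory homomorphisms and on the identification of $\rhd$ with substitution.
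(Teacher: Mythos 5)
Your proposal is correct and follows essentially the same route as the paper: restriction is well defined because theory homomorphisms preserve the hyperaffineness equations, the second axiom of~\eqref{eq:25} is an instance of preservation of substitution, and the first follows by applying $\varphi$ to the defining equation~\eqref{eq:12} and invoking unicity of hyperaffine--unary decompositions. The paper writes the same chain of equalities in the reverse order, but the argument is identical.
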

\begin{proof}
  $(\thg)^\downarrow$ is clearly functorial so long as it is
  well-defined on morphisms. To show this, let
  $\varphi \colon \mathbb{T} \rightarrow \mathbb{T}'$ be a
  homomorphism between hyperaffine--unary theories. Clearly, $\varphi$
  preserves both hyperaffine operations and unary operations, and so
  restricts back to
  $\res \varphi {\mathbb{H}} \colon \mathbb{H} \rightarrow
  \mathbb{H}'$ and $\res \varphi M \colon M \rightarrow M'$. We must
  verify that these restrictions satisfy the axioms in~\eqref{eq:25}.
  The second axiom is simply an instance of the homomorphism axiom for
  $\varphi$; as for the first, we have:
  \begin{align*}
    \varphi(m^\ast h)\bigl(\lambda i.\, \varphi(m)(x_i)\bigr) &= 
    \varphi\bigl( m^\ast h(\lambda i.\, m(x_i))\bigr) = 
    \varphi\bigl( m(h(x))\bigr) \\
    &= \varphi(m)\bigl(\varphi(h)(x)\bigr) = \bigl(\varphi(m)^\ast\varphi(h)\bigr)\bigl(\lambda i.\, \varphi(m)(x_i)\bigr)
  \end{align*}
  whence
  $\res \varphi {\mathbb{H}}(m^\ast h) =
  (\res \varphi M(m))^\ast(\res \varphi {\mathbb{H}}(h))$ by unicity of
  decompositions.
\end{proof}

Given the above, we are now ready to state the main result of this section:
\begin{Thm}
  \label{thm:4}
  We have a triangle of equivalences, commuting to
  within natural isomorphism, as to the left in:
  \begin{equation*}
    \cd[@!C@C-3em]{
      \HAUn \ar[dr]_-{(\thg)\text-\mathrm{Mod}} \ar[rr]^-{(\thg)^\downarrow} & & 
      \dc {\HA}{\Un} \ar[dl]^-{(\thg)\text-\mathrm{Mod}} \\
      & (\CCVar)^\mathrm{op}
    } \qquad 
    \cd[@!C@C-3em]{
      \HAUnf \ar[dr]_-{(\thg)\text-\mathrm{Mod}} \ar[rr]^-{(\thg)^\downarrow} & & 
      \dc {\HA^\omega}{\Un} \ar[dl]^-{(\thg)\text-\mathrm{Mod}} \\
      & (\CCVar^\omega)^\mathrm{op}
    }
  \end{equation*}
  which restricts back to a triangle of equivalences as to the right.
\end{Thm}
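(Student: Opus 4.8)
The plan is to realise all three legs as (restrictions of) known equivalences and then invoke the two-out-of-three property. Write $T = (\thg)^\downarrow$, write $L$ for the left leg $(\thg)\text-\cat{Mod}\colon \HAUn \to (\CCVar)^{\mathrm{op}}$, and $R$ for the right leg $(\thg)\text-\cat{Mod}\colon \dc{\HA}{\Un} \to (\CCVar)^{\mathrm{op}}$. I would establish three things: that $L$ is an equivalence, that the triangle commutes up to natural isomorphism (i.e.\ $L \cong R\circ T$), and that $T$ is an equivalence. Two-out-of-three then forces $R$ to be an equivalence and delivers the whole triangle.

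For $L$, I would restrict the functorial semantics equivalence $(\thg)\text-\cat{Mod}\colon \AT^{\mathrm{op}} \to \Var$ recalled in \cref{sec:structure-semantics}. By \cref{thm:3} a theory is hyperaffine--unary exactly when its variety of models is cartesian closed, and non-degeneracy transfers across the equivalence, since $\pi_1 \neq \pi_2 \in T(2)$ holds precisely when the free model on two generators, and hence some model, has at least two elements. Thus $(\thg)\text-\cat{Mod}$ carries $\HAUn$ into $(\CCVar)^{\mathrm{op}}$ and is essentially surjective onto it, so its restriction $L$ --- being the restriction of an equivalence to a full subcategory and its essential image --- is again an equivalence. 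For the commuting, I would use \cref{prop:18}: it supplies for each hyperaffine--unary $\mathbb{T}$ a concrete isomorphism $\mathbb{T}\text-\cat{Mod}\cong\mathbb{T}^\downarrow\text-\cat{Mod}$, that is $L \cong R\circ T$ on objects; naturality in $\mathbb{T}$ is then a routine check, all functors and the isomorphism being concrete and the identity on underlying sets.

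The substance of the argument is showing $T = (\thg)^\downarrow$ is an equivalence. Essential surjectivity is immediate from \cref{prop:17}, whose bicrossed product $\mathbb{H}\bc M$ (non-degenerate since $\mathbb{H}$ is) satisfies $(\mathbb{H}\bc M)^\downarrow \cong \dc{\mathbb{H}}{M}$. Faithfulness is also immediate: a homomorphism $\varphi$ is pinned down on every $t = \dc h m = h(\lambda i.\, m)$ by its restrictions to the hyperaffine and unary parts. The one step needing genuine calculation --- and the main obstacle --- is \emph{fullness}: given a matched-pair homomorphism $\dc\psi f\colon \mathbb{S}^\downarrow \to \mathbb{T}^\downarrow$, I would define $\varphi(\dc h m) = \dc{\psi(h)}{f(m)}$ and verify it is a theory homomorphism. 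Preservation of projections is trivial; preservation of substitution follows by expanding both sides with the substitution formula~\eqref{eq:14} and matching components --- the hyperaffine parts agreeing because $\psi$ is a theory map with $\psi(m^\ast k_i) = f(m)^\ast\psi(k_i)$, and the unary parts because $f$ is a monoid map with $f(h\rhd n) = \psi(h)\rhd(\lambda i.\, f(n_i))$, these being exactly the two axioms~\eqref{eq:25}. Since $\varphi$ restricts to $\psi$ and $f$, we get $\varphi^\downarrow = \dc\psi f$, so $T$ is full, hence an equivalence, and by two-out-of-three so is $R$.

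Finally, for the finitary triangle on the right I would observe that every functor restricts to the finitary subcategories: functorial semantics matches finitary theories with finitary varieties, $(\thg)^\downarrow$ preserves finitariness by the last clause of \cref{prop:16}, and the bicrossed product of a finitary matched pair is again finitary, since each operation $\dc h m$ depends only on the finitely many coordinates on which $h$ depends. The three observations above --- $L$ an equivalence, the triangle commuting via \cref{prop:18}, and $T$ an equivalence --- then apply verbatim to the finitary legs, giving the right-hand triangle of equivalences.
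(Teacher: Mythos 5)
Your proposal is correct and follows essentially the same route as the paper: commutativity of the triangle via Proposition~\ref{prop:18}, the left leg an equivalence via Theorem~\ref{thm:3} and functorial semantics, and then $(\thg)^\downarrow$ shown to be an equivalence using Proposition~\ref{prop:17} for essential surjectivity, unicity of decompositions for faithfulness, and the substitution formula~\eqref{eq:14} together with the axioms~\eqref{eq:25} for fullness. The only cosmetic difference is that you phrase the final step as two-out-of-three where the paper concludes directly, and you spell out the non-degeneracy and finitariness bookkeeping slightly more explicitly.
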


\begin{proof}
  The triangles commute to within isomorphism by
  Proposition~\ref{prop:18}, and by Theorem~\ref{thm:3}, their left
  edges are equivalences. So to complete the proof it suffices to show
  that $(\thg)^\downarrow$ is an equivalence to the left and the right. We
  know that in both cases it is essentially surjective by
  Proposition~\ref{prop:17}, and so it remains only to show it is also
  full and faithful. For fidelity, note that any homomorphism
  $\varphi \colon \mathbb{S} \rightarrow \mathbb{T} \in \HAUn$ must,
  by unicity of decompositions, send $\dc h m$ to
  $\dc {\varphi(h)} {\varphi(m)}$, and so is determined by its
  hyperaffine and unary restrictions. For fullness, let
  $\mathbb{S}, \mathbb{T} \in \HAUn$ and let
  $\dc \varphi f \colon \mathbb{S}^\downarrow \rightarrow
  \mathbb{T}^\downarrow$. We must show that the functions
  \begin{equation*}
    \psi \colon S(I) \rightarrow T(I) \qquad \dc h m \mapsto \dc {\varphi(h)}{f(m)}
  \end{equation*}
  preserve projections and substitution. For projections, we have
  $\psi(\pi_i) = \varphi(\dc {\pi_i} 1) =
  \dc{\varphi(\pi_i)}{f(1)} = \dc
  {\pi_i} 1 = \pi_i$, while for substitution, we have:
  \begin{align*}
    \psi(\dc h m(\dc k n)) &= 
    \psi(\dc {h(\lambda i.\, m^\ast k_i)} {h \rhd (\lambda i.\, mn_i)}) \\
    &= 
    \dc {\varphi(h(\lambda i.\, m^\ast k_i))} {f(h \rhd (\lambda i.\, mn_i))} \\
    &= 
    \dc {(\varphi(h))(\lambda i.\, \varphi(m^\ast k_i)))} {\varphi(h) \rhd (\lambda i.\, f(mn_i))}\\
    &= 
    \dc {(\varphi(h))(\lambda i.\, f(m)^\ast(\varphi(k_i)))} {\varphi(h) \rhd (\lambda i.\, f(m)f(n_i))} \\
    &= \dc {\varphi(h)} {f(m)}(\lambda i.\,
    \dc {\varphi(k)} {f(n)}) \\
    &= \psi(\dc h m)(\lambda i.\, \psi(\dc {k_i} {n_i}))\rlap{ .}\qedhere
  \end{align*}
\end{proof}

\section{Matched algebras and $\BM$-sets}
\label{sec:general-case:-two}

In this section we use Theorem~\ref{thm:1} to recast the notion of
matched pair of theories in terms of what we will call a \emph{matched
  pair of algebras}. This yields a reformulation of
Theorem~\ref{thm:4} giving a functorial equivalence between
(non-degenerate) hyperaffine--unary theories, matched pairs of
algebras, and cartesian closed varieties. We begin in the finitary
case, where a matched pair $\BM$ involves a Boolean algebra $B$ and a
monoid $M$ which act on each in a suitable way---a structure which was
already considered in~\cite[\sec 4]{Jackson2009Semigroups}, in a
related, though different, context.

\begin{Defn}[Matched pair of algebras]
  \label{def:16}
  A non-degenerate \emph{matched pair of algebras} $\BM$ comprises
  a non-degenerate Boolean algebra $B$, a monoid $M$ and:
  \begin{itemize}
  \item $B$-set structure on $M$, which we write as $b, m, n \mapsto b(m,n)$;
  \item $M$-set structure on $B$, which we write as $m, b \mapsto m^\ast b$;
  \end{itemize}
  such that $M$ acts on $B$ by Boolean homomorphisms, and such that:
  \begin{itemize}
  \item $b(m,n)p = b(mp,np)$;
  \item $m(b(n,p)) = (m^\ast b)(mn,mp)$; and
  \item $b(m,n)^\ast(c) = b(m^\ast c, n^\ast c)$,
  \end{itemize}
  for all $m,n,p \in M$ and $b,c \in B$. Here, in the final axiom, we
  recall from Remark~\ref{rk:1} that $B$ itself is a $B$-set under the
  operation of conditioned disjunction
  $b(c,d) = (b \wedge c) \vee (b' \wedge d)$. These axioms are
  equivalently the conditions that:
  \begin{itemize}
  \item $m \equiv_b n  \implies mp \equiv_b np$;
  \item $n \equiv_b p  \implies mn \equiv_{m^\ast b} mp$;
  \item $m \equiv_b n \implies m^\ast c \equiv_b n^\ast c$, i.e., $b
    \wedge m^\ast c = b \wedge n^\ast c$.
  \end{itemize}

  A \emph{homomorphism of matched pairs of algebras}
  $\dc \varphi f \colon \BM \rightarrow \dc {B'} {M'}$ comprises a
  Boolean homomorphism $\varphi \colon B \rightarrow B'$ and a monoid
  homomorphism $f \colon M \rightarrow M'$ such that, for all
  $m,n \in M$ and $b \in B$ we have:
  \begin{equation}\label{eq:28}
    \varphi(b)(f(m),f(n)) = f(b(m,n)) \ \ \  \text{and} \ \ \ f(m)^\ast(\varphi(b)) = \varphi(m^\ast b)\rlap{ ,}
  \end{equation}
  or equivalently, such that
  \begin{equation}\label{eq:29}
    \ \ \quad m \equiv_b n \implies f(m) \equiv_{\varphi(b)} f(n) \qquad \text{and} \qquad f(m)^\ast(\varphi(b)) = \varphi(m^\ast b)\text{ .} 
  \end{equation}
  We write $\dc {\BA} {\Mon}$ for the category of non-degenerate
  matched pairs of algebras.
\end{Defn}

We now establish the desired equivalence between finitary matched
pairs of theories, and matched pairs of algebras.

\begin{Prop}
  \label{prop:20}
  The assignment sending a non-degenerate finitary hyperaffine theory
  $\mathbb{H}$ to the Boolean algebra $B = H(2)$ of
  Proposition~\ref{prop:10} induces an equivalence of categories
  $\Theta \colon \dc {\HA^\omega} {\Un} \rightarrow \dc {\BA} {\Mon}$.
\end{Prop}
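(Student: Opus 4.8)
The plan is to define $\Theta$ explicitly and then verify it is fully faithful and essentially surjective, leaning throughout on the finitary equivalence $\mathbb{T}_{(\thg)} \colon \BA \rightarrow \HA^\omega$ of \cref{thm:1} and on the dictionary between $B$-sets and $\mathbb{T}_B$-models of \cref{prop:8,prop:2}. On objects, $\Theta$ sends $\dc{\mathbb{H}}{M}$ to $\dc{B}{M}$ with $B = H(2)$, equipping $M$ with the $B$-set structure $b(m,n) \defeq b \rhd (m,n)$ (the restriction to binary $h$ of the $\mathbb{H}$-model structure $\mdl[M]$) and equipping $B = H(2)$ with the $M$-action got by restricting each theory endomorphism $m^\ast \colon \mathbb{H} \rightarrow \mathbb{H}$ to $H(2)$; on morphisms it sends $\dc{\varphi}{f}$ to $\dc{\res{\varphi}{H(2)}}{f}$. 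To see this is well-defined on objects, observe that axiom (i) of \cref{def:14} makes $m^\ast$ a theory endomorphism, which by \cref{thm:1} is a Boolean endomorphism of $B = H(2)$, and axiom (ii) makes these an $M$-action by Boolean homomorphisms; moreover the three axioms of \cref{def:16} are precisely the $I = 2$ instances of axioms (iv), (v), (vi) of \cref{def:14}, once one notes (via \cref{rk:1}) that substitution of binary operations into $\mdl[H](2)$ is the conditioned disjunction giving $B$ its self-action. Well-definedness on morphisms is immediate, since \eqref{eq:28} is the $I = 2$ instance of \eqref{eq:25}.

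Faithfulness and fullness both reduce to \cref{thm:1}. For faithfulness, $f$ is carried along unchanged and $\varphi$ is determined by $\res{\varphi}{H(2)}$ because a theory homomorphism $\mathbb{H} \rightarrow \mathbb{H}'$ is determined by its underlying Boolean homomorphism $H(2) \rightarrow H'(2)$. For fullness, given a homomorphism $\dc{\varphi_0}{f} \colon \dc{B}{M} \rightarrow \dc{B'}{M'}$ of matched pairs of algebras, I first use fullness in \cref{thm:1} to lift $\varphi_0$ to a theory homomorphism $\varphi \colon \mathbb{H} \rightarrow \mathbb{H}'$ restricting to $\varphi_0$, and then check the two clauses of \eqref{eq:25} for all $I$. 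The first clause holds because, as functions of $h$, both sides are theory homomorphisms $\mathbb{H} \rightarrow \mathbb{H}'$ (namely $\varphi \circ m^\ast$ and $f(m)^\ast \circ \varphi$) which by the second clause of \eqref{eq:28} agree on $H(2)$, hence agree everywhere by faithfulness in \cref{thm:1}. The second clause of \eqref{eq:25} says exactly that $f \colon \mdl[M] \rightarrow \varphi^\ast \mdl[M']$ is an $\mathbb{H}$-model homomorphism; but since $\mathbb{H} \cong \mathbb{T}_B$ is finitary, an $\mathbb{H}$-model homomorphism is the same as a $B$-set homomorphism (\cref{prop:2,prop:8}), and $f$ is such by the first clause of \eqref{eq:29}.

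Essential surjectivity is where the real content lies. Given a matched pair of algebras $\BM$, I set $\mathbb{H} = \mathbb{T}_B$ (so $H(2) \cong B$ via $\omega_b \leftrightarrow b$), define $m^\ast \defeq \mathbb{T}_{m^\ast} \colon \mathbb{H} \rightarrow \mathbb{H}$ from the Boolean endomorphisms $m^\ast \colon B \rightarrow B$ of the given $M$-action, and define $\rhd$ by regarding $M$ as a $B$-set, hence (via \cref{prop:8}) as an $\mathbb{H}$-model, and setting $h \rhd n \defeq \dbr{h}_{\mdl[M]}(n)$. One then verifies axioms (i)--(vi) of \cref{def:14}: (i)--(iii) follow from functoriality of $\mathbb{T}_{(\thg)}$, from $(mn)^\ast = m^\ast n^\ast$ as Boolean homomorphisms, and from the construction; while (iv)--(vi) assert that right multiplication, left multiplication, and $(\thg)^\ast h$ are $\mathbb{H}$-model homomorphisms, so that reducing each to the corresponding $B$-set homomorphism statement, the $I = 2$ instances become precisely the three axioms of \cref{def:16}.

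The one genuinely non-trivial point---and the main obstacle---is that axiom (vi) must hold for \emph{all} arities, i.e.\ for every $h \in H(I)$ and not merely $h \in H(2)$, because the codomain $\mdl[H](I)$ of $(\thg)^\ast h$ depends on the arity. Concretely one must show $m \equiv_b m' \Rightarrow m^\ast h \equiv_b m'^\ast h$ in the free $B$-set $\mdl[H](I)$ for every $h \in H(I)$. Writing $h$ as a distribution $\omega \colon I \rightarrow B$, so that $m^\ast h = m^\ast \circ \omega$, and using the description of $\equiv_b$ on free $B$-sets from \cref{rk:1}, this reduces to $b \wedge m^\ast(\omega(i)) = b \wedge m'^\ast(\omega(i))$ for each $i$, which is exactly the third axiom of \cref{def:16} applied to $c = \omega(i)$. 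Finally, unwinding the definitions shows $\Theta(\dc{\mathbb{H}}{M}) \cong \BM$, using $\dbr{\omega_b}(m,n) = b(m,n)$ from \cref{prop:8} and $m^\ast \circ \omega_b = \omega_{m^\ast b}$ (the latter because $m^\ast$ is a Boolean homomorphism, so $m^\ast(b') = (m^\ast b)'$); this, together with fullness, faithfulness, and essential surjectivity, yields the claimed equivalence.
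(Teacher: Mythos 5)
Your proof is correct and follows essentially the same route as the paper: both transport the matched-pair-of-theories axioms across the equivalence of \cref{thm:1}, with the only genuine work being the reduction of axiom (vi) of \cref{def:14} (and the first clause of \eqref{eq:25}) from arbitrary arity to arity $2$. The paper carries out that reduction via \cref{prop:11}(i) (hyperaffine operations are determined by their binary reducts, and the maps $(k \rhd m)^\ast$ and $m_j^\ast$ are theory homomorphisms and so commute with taking reducts), whereas you compute directly with the description of $\equiv_b$ on the free $B$-set $T_B(I)$ from \cref{rk:1}; the two arguments are equivalent and yours is equally valid.
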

\begin{proof}
  The assignment $\mathbb{H} \mapsto H(2)$ is, by Theorem~\ref{thm:1},
  the action on objects of an equivalence
  $\HA^\omega \rightarrow \BA$. Under this equivalence,
  the data and axioms of a matched pair of theories
  $\dc {\mathbb{H}} M$ as in Definition~\ref{def:14} transform as follows:
  \begin{itemize}
  \item $\mathbb{H}$ and $M$ correspond to the Boolean algebra $B =
    H(2)$ and monoid $M$;
  \item The maps to the left of~\eqref{eq:15}, satisfying the
    bicrossed pair axioms
    (i) and (ii), correspond to a monoid action $m, b \mapsto m^\ast b$ of
    $M$ on $B$ by Boolean homomorphisms;
  \item The maps to the right of~\eqref{eq:15}, satisfying the axiom
    (iii), correspond to a $B$-set structure $b,m,n \mapsto b(m,n)$ on
    $M$;
  \item The axioms (iv) and (v) correspond directly to the first two
    axioms for a matched pair of algebras.
  \end{itemize}
  As for axiom (vi), we claim that this corresponds to the final axiom
  for a matched pair of algebras. This is not completely immediate: we
  must first observe that (vi) can be replaced by the apparently
  weaker special case which takes $I = 2$:
  \begin{enumerate}
  \item[(vi)$'$] For $h \in H(2)$, the map
    $(\thg)^\ast h \colon M \rightarrow H(2)$ is an $\mathbb{H}$-model map
    $\mdl[M] \rightarrow \mdl[H](2)$.
  \end{enumerate}
  To see that this special case implies the general one, we must show
  that for any $h \in H(I)$, $k
  \in H(J)$ and $m \in M^J$ we have:
  $(k \rhd m)^\ast(h) = k(\lambda j.\, m_j^\ast h)$.
  By~\cref{prop:11}(i), it suffices to verify for each $i \in I$ the
  equality of the
  binary reducts $(\thg)^{(i)}$ of each side. Since the operations
  $(k \rhd m)^\ast$ and $m_j^\ast$ are theory homomorphisms
  $\mathbb{H} \rightarrow \mathbb{H}$, we have
  \begin{equation*}
    ((k \rhd m)^\ast h)^{(i)} = (k \rhd m)^\ast(h^{(i)}) \quad \text{and} \quad k(\lambda j.\, m_j^\ast h)^{(i)} = k(\lambda j.\, m_j^\ast(h^{(i)}))
  \end{equation*}
  and since the $h^{(i)}$ are binary, these terms are equal by the
  special case (vi)$'$. Observing that the $\mathbb{H}$-model
  structure on $\mathbf{H}(2)$ corresponds to the $B$-action on $B$ by
  conditioned disjunction, we thus conclude that (vi)$'$, and hence
  also (vi), are equivalent to the final axiom for a matched pair of theories.

  It remains to show that homomorphisms match up under the above
  correspondences: for which we must show that the two conditions
  of~\eqref{eq:25} correspond to the two conditions of~\eqref{eq:28}.
  For the first condition in~\eqref{eq:25}, this is achieved by
  exploiting~\cref{prop:11}(i) like before to reduce to then
  case $I = 2$. As for the second condition, we may
  re-express it as saying that $f$ is a homomorphism
  of $\mathbb{H}$-models
  $\mdl[{M}] \rightarrow \varphi^\ast(\mdl[{M'}])$, from which the
  correspondence with~\eqref{eq:28} is immediate.
\end{proof}

So each finitary matched pair of theories $\dc {\mathbb{H}} M$ has a more
concrete expression as a matched pair of algebras $\BM$; we now
show that, correspondingly, the
variety of $\dc {\mathbb{H}} M$-models has a more concrete
expression as a variety of \emph{$\BM$-sets}:

\begin{Defn}[Variety of $\BM$-sets]
  \label{def:17}
  Let $\BM$ be a non-degenerate matched pair of algebras. A
  \emph{$\BM$-set} is a set $X$ endowed with $B$-set structure and
  $M$-set structure, such that in addition we have:
  \begin{equation}\label{eq:30}
    b(m,n) \cdot x = b(m \cdot x, n \cdot x) \qquad \text{and} \qquad m \cdot b(x,y) = (m^\ast b)(m \cdot x, m \cdot y)
  \end{equation}
  for all $b\in B$, $m,n \in M$ and $x,y \in X$; or equivalently, such
  that:
  \begin{equation}
    \label{eq:53}
    m \equiv_b n \implies m \cdot x \equiv_b n \cdot x \qquad \text{and} \qquad
    x \equiv_b y \implies m \cdot x \equiv_{m^\ast b} m \cdot y\rlap{ .}
  \end{equation}
  A homomorphism of
  $\BM$-sets is a function which is at once a $B$-set and an $M$-set
  homomorphism. We write $\BM\text-\cat{Set}$ for the variety of
  $\BM$-sets.
\end{Defn}

We noted in the preceding sections that any non-degenerate Boolean
algebra $B$ is always a $B$-set over itself, and that any monoid $M$
is always an $M$-set over itself. If $\BM$ is a matched pair, then
by definition we also have that $B$ is an $M$-set, and $M$ is a
$B$-set; it should therefore be no surprise that, when endowed with
these actions, both $B$ and $M$ become $\BM$-sets. In fact, $M$ is
the free $\BM$-set on one generator; while $B$ is the coproduct of
two copies of the terminal $\BM$-set.

\begin{Prop}
  \label{prop:21}
  The equivalence $\Theta$ of Proposition~\ref{prop:20} fits into a triangle of
  equivalences, commuting to within natural isomorphism:
  \begin{equation*}
    \cd[@!C@C-3em]{
      \dc {\HA^\omega}{\Un} \ar[dr]_-{(\thg)\text-\mathrm{Mod}} \ar[rr]^{\Theta} & & 
      \dc{\BA} {\Mon} \ar[dl]^-{(\thg)\text-\mathrm{Set}} \\
      & (\CCVar^\omega)^\mathrm{op}\rlap{ .}
    }
  \end{equation*}
\end{Prop}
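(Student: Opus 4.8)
The plan is to take the assignment $\BM \mapsto \BM\text-\cat{Set}$ of~\cref{def:17}, acting on a homomorphism $\dc \varphi f$ by restriction of structure along $\varphi$ and $f$, as the functor $(\thg)\text-\cat{Set}$, and to exhibit a natural isomorphism $(\thg)\text-\cat{Set} \circ \Theta \cong (\thg)\text-\cat{Mod}$ witnessing commutativity of the triangle. Granting this, the conclusion is automatic: $\Theta$ is an equivalence by~\cref{prop:20} and the left edge $(\thg)\text-\cat{Mod} \colon \dc{\HA^\omega}{\Un} \to (\CCVar^\omega)^{\mathrm{op}}$ is an equivalence by the finitary case of~\cref{thm:4}, so by two-out-of-three $(\thg)\text-\cat{Set}$ is an equivalence; the same commuting isomorphism also certifies that $(\thg)\text-\cat{Set}$ really lands in $\CCVar^\omega$, since each $\BM\text-\cat{Set}$ is then concretely isomorphic to a cartesian closed variety.

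Thus everything reduces to a single object-level claim: if $\dc{\mathbb{H}}{M}$ is a finitary matched pair of theories with $\Theta(\dc{\mathbb{H}}{M}) = \BM$, so that $B = H(2)$, then $\dc{\mathbb{H}}{M}\text-\cat{Mod}$ is concretely isomorphic to $\BM\text-\cat{Set}$. First I would apply the finitary triangle of~\cref{thm:1} to identify, on any set $X$, the $\mathbb{H}$-model structures with the $B$-set structures, the identification sending an $\mathbb{H}$-model to the $B$-set with $b(x,y) = \dbr{b}(x,y)$ for $b \in B = H(2)$; the $M$-set structure being common to both descriptions. A $\dc{\mathbb{H}}{M}$-model and a $\BM$-set then become the same underlying data---a set carrying a $B$-set and an $M$-set structure---and it remains only to match the compatibility conditions~\eqref{eq:26} with~\eqref{eq:53}.

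The matching I would carry out through the equivalence relations $\equiv_b$ of~\cref{prop:2}. The computational input, read off from the proof of~\cref{prop:8} under $\mathbb{H} \cong \mathbb{T}_B$, is that $\dbr{h}(a)$ is the unique $z \in X$ with $z \equiv_{h^{(i)}} a_i$ for all $i$, and dually that $h \rhd m$ is the unique element of $M$ with $h \rhd m \equiv_{h^{(i)}} m_i$. Using these, and writing $w = h \rhd m$, the first equation of~\eqref{eq:26} rearranges to the implication that $w \equiv_{h^{(i)}} m_i$ in $M$ forces $w \cdot x \equiv_{h^{(i)}} m_i \cdot x$ in $X$ for each $i$; its binary instance $h \in H(2)$ is exactly the first condition of~\eqref{eq:53}, and by the uniqueness property of $\dbr{h}$ the general case follows back from that condition, so the two are equivalent. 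Likewise, since $n^\ast$ is a theory homomorphism we have $(n^\ast h)^{(i)} = n^\ast(h^{(i)})$, so the second equation of~\eqref{eq:26} rearranges to the implication that $z \equiv_b y$ forces $n \cdot z \equiv_{n^\ast b} n \cdot y$, which is the second condition of~\eqref{eq:53}. This establishes the concrete isomorphism.

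Finally I would check naturality: a homomorphism $\dc \varphi f$ of matched pairs of theories and its image $\Theta(\dc \varphi f)$ both act by pulling back the $\mathbb{H}$-/$B$-structure along $\varphi$ and the $M$-set structure along $f$, and these agree under the identification of~\cref{thm:1}, which is itself functorial; this simultaneously shows $(\thg)\text-\cat{Set}$ is well defined into $(\CCVar^\omega)^{\mathrm{op}}$ and that the triangle commutes. I expect the axiom-matching of the previous paragraph to be the only real obstacle: the subtlety is that~\eqref{eq:26} is quantified over operations of all arities whereas~\eqref{eq:53} speaks only of binary operations, so the crux is to see---via the reduction to binary reducts afforded by~\cref{prop:11}(i) together with the characterisations of $\dbr{h}$ and $\rhd$ above---that no strength is lost in passing to the binary fragment.
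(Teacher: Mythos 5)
Your proposal is correct and follows essentially the same route as the paper: use Theorem~\ref{thm:1} to identify $\mathbb{H}$-model structures with $B$-set structures, then match the arity-$I$ compatibility axioms~\eqref{eq:26} with the binary ones by reducing to binary reducts (the paper cites the $I=2$ reduction from Proposition~\ref{prop:20} and the reading of the right-hand axiom as an $\mathbb{H}$-model homomorphism condition, where you instead phrase both reductions via the uniqueness characterisation $\dbr{h}(a) \equiv_{h^{(i)}} a_i$ --- the same mechanism in the equivalence-relation form~\eqref{eq:53} rather than~\eqref{eq:30}). Your explicit two-out-of-three remark for why the third edge is an equivalence is left implicit in the paper but is exactly how the statement is meant to be read.
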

\begin{proof}
  Given a matched pair of theories
  $\dc {\mathbb{H}} M \in \dc {\HA^\omega}{\Un}$ with associated
  matched pair of algebras $\BM$, we know by Theorem~\ref{thm:1}
  that the data of an $\dc{\mathbb{H}} M$-model structure on $X$, as
  in Definition~\ref{def:15}, will
  transform as follows:
  \begin{itemize}
  \item The $\mathbb{H}$-model and $M$-set structure on $X$ correspond
    to a $B$-set structure $b,x,y \mapsto b(x,y)$ and an $M$-set
    structure;
  \item The left-hand axiom in~\eqref{eq:26}, after reducing to the
    case $I = 2$ as in the proof of Proposition~\ref{prop:20}, becomes
    the left-hand axiom in~\eqref{eq:30}.
  \item The right-hand axiom in~\eqref{eq:26} states that $n \cdot
    (\thg)$ is an $\mathbb{H}$-model homomorphism $\mdl[X] \rightarrow
    n^\ast \mdl[X]$, and thus becomes the right-hand
    axiom in~\eqref{eq:30}.
  \end{itemize}
  It is clear that the homomorphisms match up under this
  correspondence, and so the variety of $\dc {\mathbb{H}} M$-models
  and the variety of $\BM$-sets are concretely isomorphic. It is
  easy to check that these isomorphisms are natural in
  $\dc {\mathbb{H}} M$ as required.
\end{proof}

Now extending the preceding arguments to the non-finitary case is
straightforward. First we generalise the notion of matched pair of
algebras.

\begin{Defn}
  \label{def:18}
  A non-degenerate \emph{Grothendieck matched pair of algebras} $\BJM$
  comprises a non-degenerate matched pair of algebras $\BM$ and a
  zero-dimensional topology $\J$ on $B$, such that:
  \begin{itemize}
  \item The $B$-set $M$ is a $\BJ$-set;
  \item The $M$-action on $B$ is by Grothendieck Boolean
    homomorphisms $\BJ \rightarrow \BJ$.
  \end{itemize}
  
  A \emph{homomorphism of Grothendieck matched pairs of algebras}
  $\dc \varphi f \colon \BJM \rightarrow \dc {B'_{\J'}} {M'}$
  is a homomorphism of matched pairs of algebras for which
  $\varphi \colon \BJ \rightarrow B'_{\J'}$ is a Grothendieck Boolean
  homomorphism. We write $\dc {\GBA} {\cat{Mon}}$ for the
  category of non-degenerate Grothendieck matched pairs of algebras.
\end{Defn}
We next generalise the correspondence between finitary hyperaffine
theories and matched pairs of algebras; the proof of this result is
\emph{mutatis mutandis} the same as Proposition~\ref{prop:20}.

\begin{Prop}
  \label{prop:22}
  The assignment sending a non-degenerate hyperaffine theory
  $\mathbb{H}$ to the Grothendieck Boolean algebra $\BJ$ of
  Proposition~\ref{prop:12} induces an equivalence of categories
  $\Theta \colon \dc {\HA} {\Un} \rightarrow \dc {\GBA} {\Mon}$. \qed
\end{Prop}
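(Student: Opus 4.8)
The plan is to follow the proof of \cref{prop:20} verbatim, substituting at every point the finitary equivalence $\HA^\omega \to \BA$ (coming from the right-hand triangle of \cref{thm:1}) by the infinitary equivalence $\HA \to \GBA$ (coming from the left-hand triangle). First I would record that the assignment $\mathbb{H} \mapsto \BJ$, with $\BJ$ the Grothendieck Boolean algebra of \cref{prop:12}, is precisely the action on objects of this latter equivalence, so that it defines $\Theta$ on objects. The remainder of the argument then consists of transporting the data and axioms of a matched pair of theories $\dc{\mathbb{H}}M$ (\cref{def:14}) across this equivalence and checking that they land on exactly the data and axioms of a Grothendieck matched pair of algebras (\cref{def:18}).

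For the data, the two modifications relative to \cref{prop:20} are exactly what produce the two \emph{extra} clauses of \cref{def:18}. On the one hand, axioms (i)--(ii) assert that each $m^\ast(\thg)$ is a theory endomorphism of $\mathbb{H}$; under $\HA \simeq \GBA$ such an endomorphism corresponds not merely to a Boolean homomorphism but to a \emph{Grothendieck} Boolean homomorphism $\BJ \to \BJ$, so that the $M$-action on $B$ is by Grothendieck Boolean homomorphisms---this is the second extra clause. On the other hand, axiom (iii) makes $M$ into an $\mathbb{H}$-model $\mdl[M]$; since $\mathbb{H} \cong \mathbb{T}_{\BJ}$, an $\mathbb{H}$-model structure is the same as a $\BJ$-set structure (\cref{prop:8}), so the induced $B$-set structure on $M$ is in fact a $\BJ$-set structure---this is the first extra clause. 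Axioms (iv) and (v) give the first two matched-pair-of-algebras axioms exactly as before, while axiom (vi) is reduced to its binary instance (vi)$'$ using \cref{prop:11}(i)---which is valid for \emph{all} non-degenerate hyperaffine theories, not only finitary ones---and then, via the $B$-action on $B$ by conditioned disjunction, yields the final matched-pair-of-algebras axiom.

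Finally, for the correspondence on homomorphisms I would check, exactly as in \cref{prop:20}, that the two conditions of~\eqref{eq:25} transform into the two conditions of~\eqref{eq:28}; the only new point is that the underlying Boolean homomorphism $\varphi$ is now forced to be a \emph{Grothendieck} Boolean homomorphism, again because $\varphi \colon \mathbb{H} \to \mathbb{H}'$ is a theory homomorphism and hence a morphism in $\GBA$ under the equivalence of \cref{thm:1}. Since the proof is genuinely \emph{mutatis mutandis}, there is no hard step: the only thing requiring care is the bookkeeping that the two enrichments of the equivalence (Boolean to Grothendieck Boolean on endomorphisms, $B$-set to $\BJ$-set on $M$) account for precisely the two additional clauses of \cref{def:18} and impose nothing further.
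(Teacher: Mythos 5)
Your proposal is correct and matches the paper's intent exactly: the paper gives no separate argument for this proposition, stating only that the proof is \emph{mutatis mutandis} that of Proposition~\ref{prop:20}, and your write-up is precisely that adaptation, correctly identifying that the two enrichments of the equivalence of Theorem~\ref{thm:1} (theory endomorphisms of $\mathbb{H}$ corresponding to Grothendieck Boolean homomorphisms, and $\mathbb{H}$-models corresponding to $\BJ$-sets via Proposition~\ref{prop:8}) account for the two extra clauses of Definition~\ref{def:18}.
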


Finally, we introduce the varieties associated to Grothendieck matched
pairs of algebras $\BJM$, and show that they match up with the
models of the corresponding matched pairs of theories.

\begin{Defn}[Variety of $\BJM$-sets]
  \label{def:19}
  Let $\BJM$ be a non-degenerate Grothendieck matched pair of
  algebras. A \emph{$\BJM$-set} is a $\BM$-set $X$ whose
  underlying $B$-set is in fact a $\BJ$-set; a homomorphism of
  $\BJM$-sets is just a homomorphism of 
  $\BM$-sets. We write $\BJM\text-\cat{Set}$ for the
  variety of $\BJM$-sets.
\end{Defn}

Like before, both $B$ and $M$ are $\BJM$-sets via their
canonical actions on themselves and each other.

\begin{Prop}
\label{prop:23}
  The equivalence ${\Theta}$ of Proposition~\ref{prop:22} fits into a triangle of
  equivalences, commuting to within natural isomorphism:
  \begin{equation*}
    \cd[@!C@C-3em]{
      \dc {\HA}{\Un} \ar[dr]_-{(\thg)\text-\mathrm{Mod}} \ar[rr]^{\Theta} & & 
      \dc{\GBA} {\Mon} \ar[dl]^-{(\thg)\text-\mathrm{Set}} \\
      & (\CCVar)^\mathrm{op}\rlap{ .}
    } \rlap{\qquad \qquad \ \ \quad \text{\qed}}
  \end{equation*}
\end{Prop}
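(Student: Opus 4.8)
The plan is to run the proof of Proposition~\ref{prop:21} essentially verbatim, but feeding in the infinitary ingredients: one invokes the left-hand (Grothendieck) triangle of Theorem~\ref{thm:1} in place of the finitary one. Fix a matched pair $\dc {\mathbb{H}} M \in \dc {\HA} {\Un}$ and let $\BJM = \Theta(\dc {\mathbb{H}} M)$ be the associated Grothendieck matched pair of algebras, so that $B = H(2)$ carries the zero-dimensional topology $\J$ of Proposition~\ref{prop:12}. By Theorem~\ref{thm:1} the variety $\mathbb{H}\text-\cat{Mod}$ is concretely isomorphic to $\BJ\text-\cat{Set}$, so the $\mathbb{H}$-model part of a $\dc {\mathbb{H}} M$-model structure on a set $X$ (Definition~\ref{def:15}) is precisely a $\BJ$-set structure on $X$, while the $M$-set part is carried across unchanged. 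It then remains to match the compatibility conditions~\eqref{eq:26} against the data of a $\BJM$-set.

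Restricting~\eqref{eq:26} to the binary operations of $\mathbb{H}$, i.e.\ to $I = 2$, recovers exactly the $\BM$-set axioms~\eqref{eq:30}, just as in Proposition~\ref{prop:21}; hence every $\dc {\mathbb{H}} M$-model carries an underlying $\BM$-set whose $B$-set is a $\BJ$-set, which is a $\BJM$-set by Definition~\ref{def:19}. The substantive point is the converse: I must recover~\eqref{eq:26} for \emph{all} $h \in H(I)$, including those realising infinite partitions, from the $I = 2$ case together with the $\BJ$-structure. Using the description of $\dbr h(x)$ as the unique $z$ with $z \equiv_{h^{(i)}} x_i$ for all $i$ (Proposition~\ref{prop:5}), the left-hand axiom follows by applying the first implication of~\eqref{eq:53} to $h \rhd m \equiv_{h^{(i)}} m_i$ in $M$, and the right-hand axiom by applying the second implication together with the identity $(n^\ast h)^{(i)} = n^\ast(h^{(i)})$ (valid since $n^\ast$ is a theory homomorphism, hence preserves binary reducts); in each case the conclusion is forced by uniqueness of the relevant $\BJ$-operation. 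The hard part—and the one genuinely new ingredient beyond Proposition~\ref{prop:21}—is ensuring these $\BJ$-operations actually exist: this is exactly what the extra clauses of Definition~\ref{def:18} supply, since $M$ being a $\BJ$-set makes $h \rhd m$ well-defined, while $n^\ast$ being a \emph{Grothendieck} Boolean homomorphism keeps the partition $\{(n^\ast h)^{(i)}\}^-$ inside $\J$, so that $\dbr{n^\ast h}$ is available in the $\BJ$-set $X$.

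Finally, a homomorphism of $\BJM$-sets is by definition merely a $\BM$-set homomorphism, i.e.\ a map preserving the $\BJ$-set and $M$-set structure, so the homomorphisms match up and the varieties $\dc {\mathbb{H}} M\text-\cat{Mod}$ and $\BJM\text-\cat{Set}$ are concretely isomorphic; one checks as before that these isomorphisms are natural in $\dc {\mathbb{H}} M$, which exhibits the triangle as commuting to within natural isomorphism. Since $\Theta$ is an equivalence by Proposition~\ref{prop:22} and the left edge $(\thg)\text-\mathrm{Mod}$ is an equivalence by Theorem~\ref{thm:4}, two-out-of-three then forces the right edge $(\thg)\text-\mathrm{Set}$ to be an equivalence as well, completing the proof.
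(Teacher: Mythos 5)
Your proposal is correct and follows essentially the route the paper intends: Proposition~\ref{prop:23} is stated with no written proof precisely because it is Proposition~\ref{prop:21} \emph{mutatis mutandis}, using the left-hand (Grothendieck) triangle of Theorem~\ref{thm:1} and the reduction from general $h \in H(I)$ to binary reducts already employed in Proposition~\ref{prop:20}. Your explicit verification that the infinite-arity instances of~\eqref{eq:26} are recovered from~\eqref{eq:53} via the uniqueness clause of Proposition~\ref{prop:5}, with the extra hypotheses of Definition~\ref{def:18} guaranteeing that the relevant $\J$-partitions exist, is exactly the detail the paper leaves implicit.
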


\begin{Rk}
  \label{rk:2}
  We can extract from the above development a description of the free
  $\BJM$-set on a set $X$ as given by the product of
  $\BJM$-sets $M \times T_{\BJ} X$. Here, $M$ is seen as a
  $\BJM$-set via its canonical structures of $\BJ$- and
  $M$-set, while $T_{\BJ}(X)$ is seen as a $\BJ$-set as in
  Remark~\ref{rk:1} and as an $M$-set via the action
  $n \cdot (m, \omega) = (nm, n^\ast \circ \omega)$. The function
  $\eta \colon X \rightarrow M \times T_{\BJ}(X)$ exhibiting
  $M \times T_{\BJ}(X)$ as free on $X$ is given by
  $x \mapsto (1, \pi_x)$.
\end{Rk}

Combining Propositions~\ref{prop:22} and \ref{prop:23} with
Theorem~\ref{thm:4}, we obtain the main theorem of this section,
relating (non-degenerate) Grothendieck matched pairs, 
hyperaffine--unary theories and cartesian closed
varieties.

\begin{Thm}
  \label{thm:5}
  We have a triangle of equivalences, commuting to within natural
  isomorphism, as to the left in:
  \begin{equation*}
    \cd[@!C@C-4.8em]{
      \HAUn  \ar[dr]_-{(\thg)\text-\cat{Mod}} \ar[rr]^-{(\thg)^\downarrow} & & 
       \dc {\GBA}{\Mon}\ar[dl]^-{(\thg)\text-\cat{Set}} \\
      & (\CCVar)^\mathrm{op}
    } \qquad
    \cd[@!C@C-4.5em]{
      \HAUnf \ar[dr]_-{(\thg)\text-\cat{Mod}} \ar[rr]^-{(\thg)^\downarrow} & & 
      \dc {\BA}{\Mon} \ar[dl]^-{(\thg)\text-\cat{Set}}  \\
      & (\CCVar^\omega)^\mathrm{op}
    }
  \end{equation*}
  which restricts back to a triangle of equivalences as to the right.
\end{Thm}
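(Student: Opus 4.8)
The plan is to obtain both triangles of \cref{thm:5} by \emph{pasting} the triangle of \cref{thm:4} onto the triangle of \cref{prop:23} (respectively, \cref{prop:21} in the finitary case). The essential point is that the top edge labelled $(\thg)^\downarrow \colon \HAUn \rightarrow \dc{\GBA}{\Mon}$ in \cref{thm:5} is, by construction, the composite of the functor $(\thg)^\downarrow \colon \HAUn \rightarrow \dc{\HA}{\Un}$ from \cref{thm:4} with the equivalence $\Theta \colon \dc{\HA}{\Un} \rightarrow \dc{\GBA}{\Mon}$ from \cref{prop:22}; the label is simply reused for this composite. So the proof is a matter of assembling equivalences and natural isomorphisms that have already been produced, rather than any new construction.

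First I would record that the three edges of the left-hand triangle are equivalences. The top edge is a composite of the equivalence $(\thg)^\downarrow$ of \cref{thm:4} with the equivalence $\Theta$ of \cref{prop:22}, hence an equivalence; the left edge $(\thg)\text-\cat{Mod}\colon \HAUn \rightarrow (\CCVar)^\mathrm{op}$ is an equivalence by \cref{thm:4}; and the right edge $(\thg)\text-\cat{Set}$ is an equivalence by \cref{prop:23}. For commutativity, I would paste the two natural isomorphisms already in hand: \cref{thm:4} supplies $(\thg)\text-\cat{Mod} \circ (\thg)^\downarrow \cong (\thg)\text-\cat{Mod}$ on $\HAUn$, while \cref{prop:23} supplies $(\thg)\text-\cat{Set} \circ \Theta \cong (\thg)\text-\cat{Mod}$ on $\dc{\HA}{\Un}$. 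Whiskering the latter isomorphism by $(\thg)^\downarrow$ and composing with the former yields
\[
  (\thg)\text-\cat{Set} \circ \Theta \circ (\thg)^\downarrow
  \;\cong\; (\thg)\text-\cat{Mod} \circ (\thg)^\downarrow
  \;\cong\; (\thg)\text-\cat{Mod},
\]
which is exactly the asserted commutativity of the \cref{thm:5} triangle to within natural isomorphism.

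The finitary triangle on the right is handled \emph{mutatis mutandis}, invoking the finitary half of \cref{thm:4}, the equivalence $\Theta \colon \dc{\HA^\omega}{\Un} \rightarrow \dc{\BA}{\Mon}$ of \cref{prop:20}, and the commuting triangle of \cref{prop:21}. That the restriction to finitary subcategories is compatible follows because $(\thg)^\downarrow$ sends finitary hyperaffine--unary theories to finitary matched pairs (the final clause of \cref{prop:16}) and $\Theta$ restricts accordingly. I do not anticipate any genuine obstacle: everything reduces to composing established equivalences and stacking their coherence isomorphisms. The only points demanding a modicum of care are verifying that the whiskered natural isomorphisms compose coherently and that the non-degeneracy and finitariness constraints match up along all three functors, both of which are routine given the groundwork laid in \cref{thm:4,prop:20,prop:21,prop:22,prop:23}.
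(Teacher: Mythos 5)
Your proposal is correct and matches the paper's own (essentially one-line) proof, which likewise obtains Theorem~\ref{thm:5} by combining Propositions~\ref{prop:22} and~\ref{prop:23} with Theorem~\ref{thm:4} (and their finitary counterparts, Propositions~\ref{prop:20} and~\ref{prop:21}). Your observation that the top edge is really the composite $\Theta \circ (\thg)^\downarrow$, together with the pasting of the two natural isomorphisms, is exactly the intended argument.
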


Another way to say this is that every non-degenerate cartesian closed
variety is a variety of $\BJM$-sets for some Grothendieck matched pair
of algebras $\BJM$. We now make explicit the cartesian closed
structure of $\BJM\text-\cat{Set}$.

\begin{Prop}
  \label{prop:24}
  The variety of $\BJM$-sets is cartesian closed.
  In particular, the variety of $\BM$-sets is cartesian closed.
\end{Prop}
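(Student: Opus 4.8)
The cartesian closedness itself is already a consequence of \cref{thm:5}, whose equivalence identifies $\BJM\text-\cat{Set}$ with an object of $\CCVar$; so the real task is to exhibit the exponentials explicitly, generalising the constructions of \cref{prop:1,prop:3,prop:7}. For the underlying set, I would use the standard fact that in a cartesian closed variety $\V$ one has $U(Z^Y) \cong \V(\mathbf F(1), Z^Y) \cong \V(\mathbf F(1) \times Y, Z)$, naturally in $Y,Z$, where $\mathbf F(1)$ is the free-on-one-generator object (the first isomorphism because homming out of $\mathbf F(1)$ computes the underlying set, the second by the exponential adjunction). By \cref{rk:2} the free $\BJM$-set on one generator is $M \times T_{\BJ}(1) \cong M$, the factor $T_{\BJ}(1)$ being a one-element, hence terminal, $\BJ$-set (as $\mathbb{T}_{\BJ}$ is affine, $T_{\BJ}(1)$ is a singleton). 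Thus I take the underlying set of $Z^Y$ to be the set of $\BJM$-set homomorphisms $h \colon M \times Y \to Z$, where $M$ carries its canonical $\BJ$- and $M$-structure from \cref{rk:2}.

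Onto this set I would put two pieces of structure. First, as in \cref{prop:1}, an $M$-action by reindexing, $(m \cdot h)(n,y) = h(nm, y)$; right multiplication by $m$ is a $\BJM$-endomorphism of $M$ by the first matched-pair axiom, so $m \cdot h$ is again a homomorphism. Second, and this is the crucial point, a \emph{twisted} $B$- (indeed $\BJ$-) structure: rather than the naive pointwise relations I set
\[
  h \equiv_b h' \iff h(n,y) \equiv_{n^\ast b} h'(n,y) \text{ for all } n \in M,\, y \in Y,
\]
equivalently $b(h,h')(n,y) = (n^\ast b)\bigl(h(n,y), h'(n,y)\bigr)$. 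The twist by the $M$-action $n^\ast$ on $B$ is forced: the naive pointwise operation fails to preserve $M$-equivariance precisely because of the compatibility axiom $m \cdot b(x,y) = (m^\ast b)(m \cdot x, m \cdot y)$ in $Z$, whereas the twisted one restores it.

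With these definitions the verifications run parallel to the earlier propositions. That $b(h,h')$ is again a $\BJM$-homomorphism, and that the relations $\equiv_b$ satisfy axioms (i)--(iv) of \cref{prop:2} --- or rather (i) and (ii)$'$ of \cref{prop:6} in the $\BJ$-case --- uses that each $n^\ast \colon \BJ \to \BJ$ is a \emph{Grothendieck} Boolean homomorphism (so it is monotone, preserves the Boolean operations, and carries partitions in $\J$ to partitions in $\J$); the existence clause of (ii)$'$ is obtained by defining the candidate value pointwise via the $\BJ$-structure of $Z$ and checking it is a homomorphism exactly as in \cref{prop:7}. The compatibility axioms \eqref{eq:30} between the two actions on $Z^Y$ then follow from $(mn)^\ast = m^\ast n^\ast$ (axiom~\eqref{eq:19}) together with the matched-pair axioms describing how $M$ acts on $B$ and $B$ on $M$. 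Finally, evaluation $\mathrm{ev}(h,y) = h(1,y)$ and the transpose $\bar g(x)(n,y) = g(n \cdot x, y)$ are defined and shown to be homomorphisms just as in \cref{prop:1}; the identity $\mathrm{ev} \circ (\bar g \times 1) = g$ holds since $1 \cdot x = x$, and uniqueness follows because any transpose $\psi$, being $M$-equivariant, satisfies $\psi(x)(n,y) = (n \cdot \psi(x))(1,y) = \psi(n \cdot x)(1,y) = g(n \cdot x, y)$. The $\BM$-set assertion is the special case where $\J$ is the topology of finite partitions.

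The main obstacle is exactly the interaction between the two actions. Taking the $B$-structure on $Z^Y$ pointwise --- as one does for plain $B$-sets in \cref{prop:3} --- does not yield $M$-equivariant maps, because in a $\BM$-set the $M$-action twists the $B$-action by $n^\ast$. Identifying the correct twisted structure, and then checking that it is simultaneously a $\BJ$-structure (via the Grothendieck-homomorphism property of each $n^\ast$) and compatible with the $M$-action (via \eqref{eq:19} and the matched-pair axioms), is where essentially all the hypotheses of a Grothendieck matched pair of algebras get consumed.
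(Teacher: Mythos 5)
Your proposal is correct and follows essentially the same route as the paper: the same underlying set of $\BJM$-homomorphisms $M \times Y \to Z$, the same $M$-action by reindexing, the same twisted relations $h \equiv_b h' \iff h(n,y) \equiv_{n^\ast b} h'(n,y)$ verified via \cref{prop:2,prop:6} using that each $n^\ast$ is a Grothendieck Boolean homomorphism, and the same evaluation and transpose. The only difference is presentational --- you motivate the choice of underlying set via the free object $M$ and explain why the twist is forced, which the paper leaves implicit.
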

\begin{proof}
  Given $\BJM$-sets $Y$ and $Z$, we define the function space $Z^Y$ to
  be the set of $\BJM$-set homomorphisms
  $f \colon M \times Y \rightarrow Z$. We make this into an
  $M$-set under the same action as in Proposition~\ref{prop:1}:
  \begin{equation*}
    m, f \qquad \mapsto \qquad m^\ast f = (\lambda n, y.\, f(nm, y))\rlap{ .}
  \end{equation*}
  We must check $m^\ast f$ is a $\BJM$-set homomorphism if $f$ is one.
  For the $M$-set aspect this is just as in Proposition~\ref{prop:1};
  for the $\BJ$-set aspect, if $n \equiv_b p$ and $y \equiv_b z$ then
  $nm \equiv_b pm$ and so $f(nm, y) \equiv_b f(pm,y)$, i.e.,
  $(m^\ast f)(n,y) \equiv_b (m^\ast f)(p,z)$ as desired.
  We now make $Z^Y$ into a $\BJ$-set via the equivalence relations:
  \begin{equation*}
    f \equiv_b g \qquad \iff \qquad f(m,y) \equiv_{m^\ast b} g(m,y) \text{ for all $m,y \in M \times Y$.}
  \end{equation*}
  Axiom (i) of Proposition~\ref{prop:2} is straightforward, given
  that each $m^\ast$ is a Grothendieck Boolean homomorphism; so it
  now suffices to check axiom (ii)$'$ of Proposition~\ref{prop:6}.
  Thus, given a partition $P \in \J$ and homomorphisms
  $f_b \colon M \times Y \rightarrow Z$ for each $b \in P$, we must
  show there is a unique $g \colon M \times Y \rightarrow Z$ with
  $g \equiv_{b} f_b$ for all $b \in P$, i.e.,
  \begin{equation}\label{eq:31}
    g(m,y) \equiv_{m^\ast b} f_b(m,y) \qquad \text{for all $b \in B$.}
  \end{equation}
  As $m^\ast$ is a Grothendieck Boolean homomorphism, the set
  $m^\ast P = \{m^\ast b : b \in B\}^-$ is in $\J$, and so for each
  $(m,y)$ there is a \emph{unique} element $g(m,y)$
  satisfying~\eqref{eq:31}. It remains to show that the
  $g \colon M \times Y \rightarrow Z$ so defined is a $\BJM$-set
  homomorphism.

  To see $g$ preserves the $\BJ$-set structure, suppose that
  $m \equiv_c n$ in $M$ and $y \equiv_c z$ in $Y$; we must show
  $g(m,y) \equiv_c g(n,z)$ in $Z$. Since $m \equiv_c n$ we have for
  each $b \in P$ that $c \wedge m^\ast b = c \wedge n^\ast b$, and so
  $g(m,y) \equiv_{c \wedge m^\ast b} f_b(m,y) \equiv_{c \wedge m^\ast
    b} f_b(n,z) \equiv_{c \wedge m^\ast b} g(n,z)$, using that
  $g(m,y) \equiv_{m^\ast b} f_b(m,y)$ and $f_b(m,y) \equiv_c f_b(n,z)$
  and $f_b(n,z) \equiv_{n^\ast b} g(n,z)$. Thus, for all
  $m^\ast b \in m^\ast P$ we have
  $g(m,y) \equiv_{c \wedge m^\ast b} g(n,z)$ and so by
  Lemma~\ref{lem:1}(i) that $g(m,y) \equiv_c g(n,z)$ as required.
  To see $g$ preserves the $M$-set structure, we must show
  $m \cdot g(n,y) = g(mn,my)$. But for each $b \in P$ we have
  $g(n,y) \equiv_{n^\ast b} f_b(n,y)$, and so
  $m \cdot g(n,y) \equiv_{(mn)^\ast b} m \cdot f_b(n,y) = f_b(mn,my)
  \equiv_{(mn)^\ast b} g(mn,my)$. Thus $m \cdot g(n,y) = g(mn,my)$ by
  Lemma~\ref{lem:1}(i).

  So $Z^Y$ is a well-defined $\BJM$-set. We now define the evaluation
  homomorphism $\mathrm{ev} \colon Z^Y \times Y \rightarrow Z$ as in
  Proposition~\ref{prop:1} by $\mathrm{ev}(f, y) = f(1, y)$. This
  preserves $M$-set structure as there; while for the $\BJ$-set
  structure, if $f \equiv_b g$ in $Z^Y$ and $y \equiv_b y'$ in $Y$,
  then
  $\mathrm{ev}(f,y) = f(1, y) \equiv_b g(1,y) \equiv_b g(1,y') =
  \mathrm{ev}(g,y')$ as desired.

  Finally, given a $\BJM$-set homomorphism
  $f \colon X \times Y \rightarrow Z$, its transpose
  $\bar f \colon X \rightarrow Z^Y$ is given by
  $\bar f(x)(m, y) = f(mx, y)$. As in Proposition~\ref{prop:1}, this
  is an $M$-set homomorphism, and is the unique such with
  $\mathrm{ev}(\bar f(x), y) = f(x,y)$ for all $x,y$.
  It remains to show that $\bar f$ preserves $\BJ$-set structure. But if
  $x \equiv_b x'$, then $mx \equiv_{m^\ast b} mx'$ for all $m \in M$,
  and so
  $\bar f(x)(m,y) = f(mx,y) \equiv_{m^\ast b} f(mx',y) = \bar
  f(x')(m,y)$ for all $(m,y) \in M \times Y$, i.e.,
  $\bar f(x) \equiv_b \bar f(x')$.
\end{proof}

Note that the results of the preceding sections have shown, without
reference to~\cite{Johnstone1990Collapsed}, that every non-degenerate
hyperaffine--unary theory is a theory of $\BJM$-sets. Since every
degenerate hyperaffine--unary theory clearly presents a cartesian
closed variety, the preceding result thus completes a proof of the
``if'' direction of Theorem~\ref{thm:3} that does not rely
on~\cite{Johnstone1990Collapsed}. Taken together with
Proposition~\ref{prop:47}, we thus obtain our desired independent
proof of Theorem~\ref{thm:3}.

As mentioned in the introduction, we defer substantive examples of
varieties of $\BJM$-sets to the companion
paper~\cite{Garner2023CartesianII}, where we establish links with
topics in operator algebra. However, Proposition~\ref{prop:47} assures
us that there is a plentiful supply of such varieties: we have one for
any object $X$ of a category with finite products and distributive
set-indexed copowers. We can now be more explicit about the $\BJM$
associated to such an $X$.

\begin{Prop}
  \label{prop:44}
  Let $\C$ be a category with finite products and set-based copowers
  for which each functor $C \times (\thg)$ preserves copowers, and let
  $X \in \C$.
  \begin{enumerate}[(a),itemsep=0.25\baselineskip]
  \item   The set $\M = C(X,X)$ is a monoid with unit
    $\mathrm{id}_X$ under the
    operation of composition in \emph{diagrammatic} order, i.e.:
    \begin{equation*}
      mn \quad = \quad X \xrightarrow{m} X \xrightarrow{n} X\rlap{ ;}
    \end{equation*}
  \item Writing $\iota_\top, \iota_\bot \colon 1 \rightarrow 1+1$ for
    the first and second copower coprojections, the set
    $B = \C(X,1 + 1)$ is a Boolean algebra under the operations
    \begin{gather*}
      \smash{1 = X \xrightarrow{!} 1 \xrightarrow{\iota_\top} 1+1 \qquad \qquad b' = X \xrightarrow{b} 1 + 1 \xrightarrow{\spn{\iota_2, \iota_1}} 1+1}\\
      \text{and} \quad 
      \smash{b \wedge c = X \xrightarrow{(b,c)} (1 + 1) \times (1 + 1) \xrightarrow{\wedge} 1+1}
    \end{gather*}
    where $\wedge \colon (1+1) \times (1+1) \rightarrow 1+1$ satisfies
    $\wedge \circ (\iota_i \times \iota_j) = \iota_{i \wedge j}$ for
    $i,j \in \{\top, \bot\}$;
  \item There is a zero-dimensional coverage $\J$ on $B$ in which
    $P \subseteq B$ is in $\J$ just when there exists a map
    $f \colon X \rightarrow P \cdot 1$ with
    $\spn{\delta_{bc}}_{b \in B} \circ f = c$ for all $c \in P$, where
    here $\delta_{bc} \colon 1 \rightarrow 1+1$ is given by
    $\delta_{bc} = \iota_{\top}$ when $b = c$ and
    $\delta_{bc} = \iota_\bot$ otherwise;
  \item $M$ acts on $B$ via precomposition;
    \begin{equation*}
      \smash{m^\ast b \quad = \quad X \xrightarrow{m} X \xrightarrow{b} 1+1\rlap{ ;}}
    \end{equation*}
  \item $B$ acts on $M$ via:
    \begin{equation*}
      \smash{(b, m, n) \mapsto X \xrightarrow{(b, \mathrm{id})} (1+1) \times X \xrightarrow{\cong} X + X \xrightarrow{\spn{m,n}} X\rlap{ .}}
    \end{equation*}
  \end{enumerate}
  So long as $B$ is non-degenerate, the above operations make $\BJM$
  into a non-degenerate Grothendieck matched pair of algebras.
  Moreover, for all $Y \in \C$, the set $\C(X,Y)$ becomes a
  $\BJM$-set, where $M$ acts on $\C(X,Y)$ via precomposition, and $B$
  acts on $\C(X,Y)$ via
  \begin{equation*}
    \smash{(b, x, y) \mapsto X \xrightarrow{(b, \mathrm{id})} (1+1) \times X \xrightarrow{\cong} X + X \xrightarrow{\spn{x,y}} Y\rlap{ .}}
  \end{equation*}
  In this manner, we obtain a factorisation of the
  hom-functor $\C(X, \thg)$ as
  \begin{equation}
    \label{eq:32}
    \cd[@!C@C-4em@-0.5em]{
      {\C} \ar@{-->}[rr]^-{K} \ar[dr]_-{\C(X, \thg)} & &
      {\BJM\text-\cat{Set}} \ar[dl]^-{U} \\ &
      {\cat{Set}}
    }
  \end{equation}
  which is universal among factorisations of $\C(X, \thg)$ through a
  variety. In particular, if $\C$ is a non-degenerate cartesian closed
  variety and $X$ is the free model $F1$ on one generator, then $K$ is
  an isomorphism.
\end{Prop}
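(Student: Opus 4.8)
The plan is to recognise the whole statement as an explicit unwinding of the abstract machinery already assembled. By \cref{prop:47}, the hypothesis that each $(\thg) \times X$ preserves copowers ensures that the complete theory of dual operations $\mathbb{T}_X$ is hyperaffine--unary; moreover the analysis there identifies its unary part with $M = T_X(1) = \C(X,X)$ and its hyperaffine part $\mathbb{H}$ with the operations of the form $(h,\mathrm{id})$, i.e.\ with maps $h \colon X \to I \cdot 1$. I would therefore set $\BJM \defeq \Theta(\mathbb{T}_X^{\downarrow})$, where $(\thg)^\downarrow$ is the passage of \cref{prop:16} and $\Theta$ the equivalence of \cref{prop:22}; this is automatically a Grothendieck matched pair of algebras, and it is non-degenerate precisely when $\mathbb{H}$, equivalently $B = H(2) = \C(X,1+1)$, is non-degenerate, which is the standing hypothesis. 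With this definition in hand, the content of parts (a)--(e) is simply that the abstractly-defined data of $\BJM$ agree with the stated categorical formulas.

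I would then verify (a)--(e) by direct translation, throughout using the identification $I \cdot X \cong (I \cdot 1) \times X$ and tracking coprojections. For (a), substitution of unary operations in $\mathbb{T}_X$ is composition $X \xrightarrow{m} X \xrightarrow{n} X$, giving the diagrammatic-order monoid. For (b), the Boolean structure on $B = H(2)$ is that of \cref{prop:10}: the constants $1 = \pi_1$ and $0 = \pi_2$ become $\iota_\top \circ {!}$ and $\iota_\bot \circ {!}$, the formula $b'(x,y) = b(y,x)$ becomes postcomposition with $\spn{\iota_2,\iota_1}$, and the meet $(b\wedge c)(x,y) = b(c(x,y),y)$ unwinds to $\wedge \circ (b,c)$ once one checks that the induced map on $1+1$ acts as stated on coprojections. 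For (d) and (e), the operation $m^\ast$ of \cref{prop:15}, restricted to $B = H(2)$, is the hyperaffine part of the substitution $m(h) = h \circ m$, giving $m^\ast b = b \circ m$; while the $B$-set structure on $M$ induced by the operation $\rhd$ sends $(b,m,n)$ to the substitution $b(m,n)$ in $\mathbb{T}_X$, which is the stated composite through $X+X$. Finally for (c), by \cref{prop:12} the topology $\J$ consists of the partitions realised by hyperaffine operations; a hyperaffine $h \in H(P)$ canonically realising $P$ is exactly a map $f \colon X \to P \cdot 1$ whose binary reducts $h^{(c)} = \spn{\delta_{bc}}_{b\in P}\circ f$ equal $c$ for all $c \in P$, which is the condition displayed in (c).

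For the ``moreover'' clause I would argue semantically. By \cref{prop:46}, $\C(X,Y)$ is a $\mathbb{T}_X$-model; since $\mathbb{T}_X$ is hyperaffine--unary, \cref{prop:18} identifies this with a $\mathbb{T}_X^{\downarrow}$-model, and \cref{prop:23} identifies that in turn with a $\BJM$-set. Reading the $M$- and $B$-actions through these concrete isomorphisms---that is, restricting the interpretation of \cref{prop:46} to the unary operations and to the binary hyperaffine operations $b \in B$---yields precisely precomposition by $m$ and the composite through $X+X$ displayed in the statement. The factorisation~\eqref{eq:32} and its universal property are then obtained by splicing the factorisation~\eqref{eq:34} of \cref{prop:46} with the chain of concrete isomorphisms $\mathbb{T}_X\text-\cat{Mod} \cong \mathbb{T}_X^{\downarrow}\text-\cat{Mod} \cong \BJM\text-\cat{Set}$; since each isomorphism is concrete and over $\cat{Set}$, universality transports. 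The final clause follows from the remark after \cref{prop:46}: when $\C$ is a non-degenerate cartesian closed variety and $X = F1$ we have $\mathbb{T}_X \cong \mathbb{T}$ with $K$ in~\eqref{eq:34} an isomorphism, whence $K$ in~\eqref{eq:32} is one too.

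The main obstacle is purely the bookkeeping in the second paragraph: each of (b)--(e) requires carefully unwinding substitution in $\mathbb{T}_X$---which is manipulation of copowers and coprojections in $\C$---through the isomorphism $I \cdot X \cong (I \cdot 1) \times X$, and checking it against the given formulas. The meet in (b) and the $B$-action in (e) are the most delicate, since there one must pass across the canonical isomorphism $(1+1)\times X \cong X + X$ and confirm that the resulting map agrees on coprojections with the Boolean meet on $\{\top,\bot\}$; the conceptual work, however, was already done in establishing $\mathbb{T}_X$ hyperaffine--unary and in \cref{prop:22,prop:23}.
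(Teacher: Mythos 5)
Your proposal is correct and follows essentially the same route as the paper: invoke Proposition~\ref{prop:47} to see that $\mathbb{T}_X$ is hyperaffine--unary, define $\BJM$ by following the constructions of Sections~\ref{sec:match-pairs-theor} and~\ref{sec:general-case:-two} applied to $\mathbb{T}_X$, check that (a)--(e) are the unwindings of that data, and obtain the factorisation~\eqref{eq:32} by transporting~\eqref{eq:34} across the concrete isomorphism $\mathbb{T}_X\text-\cat{Mod}\cong\BJM\text-\cat{Set}$. The paper's proof is in fact terser than yours, leaving the translation of (a)--(e) implicit, so your additional bookkeeping is a faithful elaboration rather than a deviation.
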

\begin{proof}
  By Proposition~\ref{prop:47}, the complete theory $\mathbb{T}_X$ of
  dual operations of $X \in \C$ is hyperaffine--unary. If
  $\mathbb{T}_X$ is degenerate, then so is the Boolean algebra $B$
  described above, and there is nothing to do; otherwise, we know that
  the non-degenerate hyperaffine--unary $\mathbb{T}_X$ corresponds to
  a Grothendieck matched pair $\BJM$ for which
  $\mathbb{T}\text-\cat{Mod}$ is concretely isomorphic to
  $\BJM\text-\cat{Set}$. Using that the hyperaffine operations of
  $\mathbb{T}_X$ are, as in the proof of Proposition~\ref{prop:47},
  those of the form
  $(h,1) \colon X \rightarrow (I \cdot 1) \times X \cong I
  \cdot X$, and following through the construction of $\BJM$ from the
  hyperaffine--unary theory $\mathbb{T}_X$ as in
  Sections~\ref{sec:match-pairs-theor}
  and~\ref{sec:general-case:-two}, yields the description above.
  Finally, the factorisation~\eqref{eq:32} is simply the
  factorisation~\eqref{eq:34} after transporting across the
  isomorphism~$\mathbb{T}\text-\cat{Mod}\cong\BJM\text-\cat{Set}$.
\end{proof}

We encourage the reader to apply this result in any category
satisfying its rather mild hypotheses. For example, when $\C$ is the
category of topological spaces, the monoid $M$ associated to a space
$X$ comprises all continuous endomorphisms of $X$, while the
Grothendieck Boolean algebra $B_\J$ comprises all clopen subsets of
$X$, with the infinite partitions in $\J$ being all infinite clopen
partitions of $X$. Now $M$ acts on $B$ by inverse image,
$\varphi, U \mapsto \varphi^{-1}(U)$, while $B$ acts on $M$ by
restriction and glueing:
$U, f, g \mapsto \spn{\res f U, \res g {U^c}}$. In this example,
$\BJM$ is rather large, in much the same way that full automorphism
groups of objects tend to be rather large, and a key aspect
of~\cite{Garner2023CartesianII} will be to apply this result in
carefully chosen situations where $\BJM$ comes out as something
combinatorially tractable and of independent interest.

\bibliographystyle{acm}
\bibliography{bibdata}

\end{document}